\newtheorem{thm}{Theorem}[section]
\newtheorem{prop}[thm]{Proposition}
\newtheorem{lemma}[thm]{Lemma}
\newtheorem{defin}[thm]{Definition}
\newtheorem{cor}[thm]{Corollary}
\numberwithin{equation}{section}
\theoremstyle{definition}
\newtheorem{rem}[thm]{Remark}
\newenvironment{remark}{\begin{rem}\rm}{\qee\end{rem}}
\newcommand{\qee}{\mbox{\hspace{0.2mm}}\hfill$\triangle$}
\newcommand{\coker}{\operatorname{coker}}
\newcommand{\Aut}{\operatorname{Aut}}
\newcommand{\rk}{\operatorname{rk}}
\newcommand{\Hom}{\operatorname{Hom}}
\newcommand{\End}{\operatorname{End}}
\newcommand{\GL}{\operatorname{GL}}
\newcommand{\im}{\operatorname{Im}}
\newcommand{\Pic}{\operatorname{Pic}}
\newcommand{\Ol}{\mathcal{O}}
\newcommand{\M}{\mathcal{M}}
\newcommand{\E}{\mathcal{E}}
\newcommand{\U}{\mathcal{U}}
\newcommand{\V}{\mathcal{V}}
\newcommand{\W}{\mathcal{W}}
\newcommand{\T}{\mathcal{T}}
\newcommand{\Spec}{\operatorname{Spec}}
\newcommand{\Pk}{P_{\vec{k}}}
\newcommand{\Lk}{L_{\vec{k}}}
\newcommand{\Gk}{G_{\vec{k}}}
\newcommand{\Z}{\mathbb{Z}}
\newcommand{\Com}{\mathbb{C}}
\newcommand{\Pu}{\mathbb{P}^1}
\newcommand{\On}{\Ol_{\Sigma_n}}
\newcommand{\Uk}{\U_{\vec{k}}}
\newcommand{\Vk}{\V_{\vec{k}}}
\newcommand{\Wk}{\W_{\vec{k}}}
\newcommand{\li}{\ell_\infty}
\newcommand{\lra}{\longrightarrow}
\newcommand{\Stab}{\operatorname{Stab}}
\newcommand{\Hilb}{\operatorname{Hilb}}
\title[Hilbert schemes of points of   $\Ol_{\Pu}(-n)$ as quiver varieties]{\large Hilbert schemes of points of  $\Ol_{\Pu}(-n)$\\[10pt]as quiver varieties} 
\subjclass[2010]{14D20;  14D21; 14J60; 16G20} 
\keywords{Hilbert schemes of points, quiver varieties,  quiver representations, special McKay correspondence, Wemyss's reconstruction algebra, ADHM data, monads}
\begin{document}

\maketitle

\vfill

\begin{center}{\sc Claudio Bartocci,$^\P$ 
Ugo Bruzzo,$^{\S\ddag}$\footnote{On leave from Scuola Internazionale Superiore di Studi Avanzati (SISSA), Trieste, Italia}\ 
\\ Valeriano Lanza$^\P$\footnote{Presently at 
IMECC - UNICAMP, Departamento de Matem\'atica, Rua S\'ergio Buarque de Holanda, 651, 13083-970 Campinas-SP, Brazil}
 and Claudio L. S. Rava$^\P$}\footnote{E-mail \tt bartocci@dima.unige.it, bruzzo@sissa.it, valeriano@ime.unicamp.br, clsrava@gmail.com}
  \\[10pt]  \small 
$^\P$Dipartimento di Matematica, Universit\`a di Genova, \\ Via Dodecaneso 35, 16146 Genova, Italia
\\[3pt]
  $^{\S}$ Departamento de Matem\'atica, Universidade Federal de Santa Catarina, \\ 
  Campus Universit\'ario Trindade,
CEP 88.040-900 Florian\'opolis-SC, Brasil \\[3pt]
  $^\ddag$Istituto Nazionale di Fisica Nucleare, Sezione di Trieste
\end{center}

\vfill

\begin{quote}\small {\sc Abstract.} In a previous paper, a realization of the moduli space of framed 
torsion-free sheaves on Hirzebruch surfaces in terms of monads was given. We build upon that result to
construct ADHM data for the Hilbert scheme of points of the total space of the line bundles $\mathcal O(-n)$ on $\mathbb P^1$,
for $n \ge 1$, i.e., the resolutions of the singularities of type
$\frac1n(1,1)$. Basically by implementing a version of the special McKay correspondence, this  ADHM description is in turn used to realize these Hilbert schemes  as irreducible connected components of quiver varieties.
We obtain in this way new examples of quiver varieties which are not of the Nakajima type.
 \end{quote}
{\footnotesize
\setcounter{tocdepth}{1}
\tableofcontents
}

\vfill

\setcounter{tocdepth}{1}
{\small\tableofcontents}

\newpage
\section{Introduction}

If $X$ is a smooth, quasi-projective  surface over $\Com$, the Hilbert scheme of points $\operatorname{Hilb}^c(X)$, which parameterizes
 $0$-dimensional subschemes of $X$ of length $c$, is quasi-projective \cite{groth61} and smooth of dimension $2c$ \cite{fog68}.  In this paper
we study the Hilbert schemes of points of the total spaces of the line bundles $\Ol_{\Pu}(-n)$
(which admit the Hirzebruch surfaces $\Sigma_n$ as projective compactifications). 
These spaces are resolutions of the singularities of type
$\frac1n(1,1)$. 
Our main results
are a description of these Hilbert schemes in terms of  ADHM data, and a consequence, their realization as  irreducible connected components of the moduli spaces of representations of   suitable quivers. {It is worth pointing out that the representation varieties we obtain in this way are not Nakajima quiver varieties.}

\subsection{Motivations and general background}
Among the many occurrences of Hilbert schemes of points  in algebraic geometry, a remarkable one is the role played by the  Hilbert schemes of points of noncompact (usually hyperk\"ahler) surfaces in geometric representation theory. Examples of such spaces are the Hilbert schemes of points of $\Com^2$, and of the minimal resolutions $X_{n,n-1}$ of toric singularities of type $\frac1n(1,n-1)$, with $n\ge 2$.

One of the main links with geometric representation theory is the description of these Hilbert schemes as Nakajima quiver varieties. This basically requires two steps: to consider a space of complex representations of a suitable quiver with relations, and then to construct a GIT (Geometric Invariant Theory) quotient of this with respect to a suitable semistability condition for the representations, in King's sense  \cite{king}. More precisely, 
let us denote by $\mathcal Q$ the Jordan quiver $A_0^{(1)}$, that is, the quiver with one vertex and one loop,
or the affine Dynkin quiver $A_{n-1}^{(1)}$, for $n\ge 2$; one can frame the quiver by doubling the vertices and adding arrows from the old vertices to the new ones, and double it, by duplicating the arrows, with the new arrows going the opposite direction. The space
$\operatorname{Rep}(\mathcal Q^{\mbox{\tiny fr,\ double}},\vec v, \vec w)$    of representations of the double framed quiver associated with $\mathcal Q$ (see \cite{Gin} and Sections \ref{background} and \ref{quiver} in this paper for notation and precise definitions)
has a natural symplectic structure, which is preserved by an action of the group
$\prod_i\operatorname{GL}(v_i)$, giving rise to a moment map; the previously mentioned space of complex representations is indeed the space of representations of $\mathcal Q^{\mbox{\tiny fr,\ double}}$ satisfying the relations that define the moment map. For suitable $\vec v$ and $\vec w$ the GIT quotient of this space, taken with respect to
a certain semistability parameter, is $\operatorname{Hilb}^c(\Com^2)$ if  $\mathcal Q=A_0^{(1)}$,
and $\operatorname{Hilb}^c(X_{n,n-1})$ if $\mathcal Q=A_{n-1}^{(1)}$. In the latter case, the space of representations coincides with the space of left modules of the framed version of Wemyss's reconstruction algebra associated with the toric singularity \cite{W2011} (Wemyss's reconstruction algebra is a quotient of the path algebra of the relevant McKay quiver).  Schiffmann and Vasserot used this description in terms of quiver varieties to define an action of the K-theoretic (or cohomological) Hall algebra associated with $\operatorname{Rep}((A_0^{(1)})^{\mbox{\tiny double}},\vec v)$ on the equivariant K-theory (or cohomology) of $\operatorname{Hilb}^c(\Com^2)$
\cite{SV2013-I, SV2013-II}. Subsequently, Negut extended this construction to
$\operatorname{Hilb}^c(X_{n,n-1})$ \cite{N2015}. This construction is quite important in geometric representation theory, as it can be used to obtain geometric representations of quantum groups, Yangians, vertex algebras, etc. (see {\em loc.~cit.} and references therein). 

In this paper we provide a quiver variety description of the Hilbert schemes of points of the total spaces $X_n$ of the line bundles $\Ol_{\Pu}(-n)$, with $n \ge 1$. For $n \ge 2$, the surface $X_n$ is the minimal resolution of the toric singularity of type $\frac1n(1,1)$; it admits 
the $n$-th Hirzebruch surface $\Sigma_n$ as a projective compactification.   These spaces have been considered in physics in connection with brane counting for string theory compactifications on ``local'' Calabi-Yau manifolds \cite{AOSV,FMP,GSST,OSV,BSS}, gauge theory on Hirzebruch surfaces and applications to the computation of invariants, such as the Betti numbers of moduli spaces of sheaves on Hirzebruch surfaces \cite{BPT,Mansch,BPSS} (see also \cite[App.~D]{BS} for some mathematical developments), topological strings and Gromov-Witten invariants (see \cite{Sz} for a review).

Thus, we supply a new example of a quiver variety {\em outside the universe of the Nakajima quiver varieties}. 
{(Other examples of quiver varieties that are not of the Nakajima type are for instance the affine Laumon spaces, see e.g.~\cite{negut} and references therein, and those associated with the quivers recently studied by Nakajima himself \cite{nakasaw}.)}
This quiver variety is obtained by regarding $\operatorname{Hilb}^c(X_n)$ as the moduli space of rank one torsion-free sheaves on the projective closure $\Sigma_n$ (Hirzebruch surface) of $X_n$ that are trivial on the compactifying divisor, and following the two above mentioned steps: we consider  a space
$\operatorname{Rep}(B_n^{\mbox{\tiny fr}},\vec v, \vec w)$ of representations of the framed version 
$B_n^{\mbox{\tiny fr}}$ of a quotient $B_n$ of the path algebra of a certain quiver $\mathcal   Q_n$, and realize $\operatorname{Hilb}^c(X_n)$ as a suitable GIT quotient of it. Here $\mathcal Q_n$, for $n\ge 2$, is the special McKay quiver associated with the toric singularity of type  $\frac1n(1,1)$, and $B_n$ is the corresponding Wemyss's reconstruction algebra. Thus we provide a new example of a moduli space of sheaves in a minimal resolution of a toric singularity that can be realized as a quiver variety associated with the corresponding special McKay quiver.

As we already mentioned,  the   Nakajima variety
 corresponding  to any quiver carries a  natural symplectic structure. This is essentially due to the fact that  the quiver variety is concocted from 
the associated double quiver. Now, the quiver we associate with the variety $X_2$ is a double,
so that the quiver space we consider, and   the Hilbert space  $\Hilb^c(X_2)$,
are symplectic holomorphic varieties. On the contrary,   the quivers $\mathcal{Q}_{n}^{\textmd{fr}}$ are not doubles for $n\neq 2$.  A result of Bottacin \cite{Bot} implies, however, that $\Hilb^c(X_n)$ carries, for all $n\geq1$, a Poisson structure whose rank is generically maximal.

\subsection{Contents of the paper}
The ADHM description of a moduli space is usually the 
 starting point of its realization as a quiver variety, in that it is the ADHM description which suggests the quiver to consider. For Hilbert schemes of points this description is available in the case of $\Com^2$
 \cite{Nakabook} and for the multi-blowups of $\mathbb C^2$, as provided by the work of A.~A.\ Henni \cite{henni} specialized to the rank one case. So, in the first part of this paper we construct ADHM data for the Hilbert schemes of points of the total space $X_n$ of the line bundle $\Ol_{\Pu}(-n)$. We
 identify the 
 space $\operatorname{Hilb}^c(X_n)$ with the  moduli space $\M^{n}(1,0,c)$  of framed sheaves on the $n$-th Hirzebruch surface $\Sigma_n$ that have rank $1$, vanishing first Chern class, and second Chern class $c_2 = c$ (the framing is a fixed isomorphism with the trivial rank $1$ bundle on a divisor linearly equivalent to the section of $\Sigma_n\to \mathbb P^1$ of positive self-intersection). By exploiting the description of $\M^{n}(1,0,c)$ in terms of monads given in \cite{bbr}, we prove (Theorem \ref{thm0}) that the moduli space $\M^{n}(1,0,c)$ is isomorphic to the quotient $P^n(c) / \GL(c, \Com)\times \GL(c, \Com)$, where $P^n(c)$ is a quasi-affine variety contained in the linear space $\End(\Com^{c})^{\oplus n+2}\oplus\Hom(\Com^{c},\Com)$.  We show this by using the fact  that the partial quotient $P^n(c) / \GL(c, \Com)$ can be assembled by glueing  $c+1$ open sets, each one isomorphic to the space of ADHM data for  $\operatorname{Hilb^c}(\Com^2)$ (Propositions \ref{thm1} and \ref{proGlue}).

In the second part of this paper we show (Theorem \ref{thm:quiver1}) that these Hilbert schemes are irreducible connected components of quiver varieties, namely, they are  embedded as irreducible connected components into varieties of representations of a quiver naturally associated with the ADHM data describing the Hilbert schemes, for a suitable choice of the stability parameter. 

More precisely, we prove:

\smallskip
\noindent{\bf Theorem.} {\em 
 For every $n,c\geq1$, the variety $\Hilb^c(X_n)$ is isomorphic to an irreducible connected component of the quotient
\[
 \operatorname{Rep}(B_n^{\operatorname{fr}},\vec{v}_c,1)^{ss}_{\vartheta_c}/\!/_{\!\vartheta_c}\,\GL(c,\Com)\times\GL(c,\Com)\,,
\]
where $\vec{v}_c=(c,c)$ and $\vartheta_c=(2c,-2c+1)$.
}

\smallskip
Here $  \operatorname{Rep}(B_n^{\operatorname{fr}},\vec{v}_c,1)^{ss}_{\vartheta_c}$
is a representation space asso\-ciated, as we sketched above, with the framed quiver
\begin{equation*}
 \xymatrix@R-2.3em{
&\mbox{\scriptsize$0$}&&\mbox{\scriptsize$1$}\\
&\bullet\ar@/_1ex/[ldd]_{j}\ar@/^/[rr]^{a_{1}}\ar@/^4ex/[rr]^{a_{2}}&&\bullet\ar@/^10pt/[ll]^{c_{1}} \\ \\
\bullet&&&\\
\mbox{\scriptsize$\infty$}&&&\\ &&\mbox{\framebox[1cm]{\begin{minipage}{1cm}\centering $n=1$\end{minipage}}}
}
\xymatrix@R-2.5em{
}\qquad\qquad
  \xymatrix@R-2.3em{
&&\mbox{\scriptsize$0$}&&\mbox{\scriptsize$1$}\\
&&\bullet\ar@/_3ex/[llddddddd]_{j}\ar@/^/[rr]^{a_{1}}\ar@/^4ex/[rr]^{a_{2}}&&\bullet\ar@/^/[ll]^{c_{1}}
\ar@/^4ex/[ll]^{c_2}\ar@/^7ex/[ll]^{\ell_1}\ar@{..}@/^10ex/[ll]\ar@{..}@/^11ex/[ll]
\ar@/^12ex/[ll]^{\ell_{n-2}}& \\ \\ \\ \\ \\&&&&&\mbox{\framebox[1cm]{\begin{minipage}{1cm}\centering $n\geq2$\end{minipage}}} \\ \\
\bullet\ar@/_/[rruuuuuuu]^{i_1}\ar@/_3ex/[rruuuuuuu]_{i_2}\ar@{..}@/_6ex/[rruuuuuuu]\ar@{..}@/_8ex/[rruuuuuuu]\ar@/_10ex/[rruuuuuuu]_{i_{n-1}}&&&&&\\
\mbox{\scriptsize$\infty$}&&&&
}
\xymatrix@R-2.5em{
}
\end{equation*}
(for $n=2$ there are no $\ell$ arrows).

This result includes the particular case of the Hilbert scheme of points of  $X_2$, which is isomorphic, as a complex variety, to the ALE space $A_1$. Kuznetsov  has provided, from a different point of view, a description of the Hilbert schemes of the ALE spaces $A_k$ as quiver varieties \cite{kuz}. We check indeed (Corollary \ref{cor:quiver}) that for $n=2$ our representation coincides with that of Kuznetsov
for $A_1$. 

Finally, Appendix A is devoted to proving the rather technical Proposition \ref{pro1}. 

\subsection{Further developments} {Among the many possible developments of the constructions described in this paper,
 one is the study of the chamber structure for the stability parameter used to define the quiver variety. More generally, the results in this paper should  open the way to a number of interesting questions in geometric representation theory, such as the existence of a K-theoretic (cohomological) Hall algebra associated with $\operatorname{Rep}(B_n,\vec v)$, with an action of this algebra on the equivariant K-theory (cohomology)
of $\operatorname{Hilb}^c(X_n)$. On another line,  an interesting and challenging problem is the characterization of the Poisson structure 
of these Hilbert schemes in purely quiver-theoretic terms, perhaps by generalizing the approaches in  \cite{Bie,VDB}.

 \subsection*{Acknowledgments} We thank the referee for the careful reading of the manuscript and for very useful suggestions, which have led to a substantial improvement of the exposition. We also thank Alberto Tacchella for valuable advice. 
 U.B.'s stay   at UFSC was supported by the grant 310002/2015-0 from ``Conselho Nacional de Desenvolvimento Cient\'ifico e Tecnol\'ogico'' (CNPq), Brazil.  He thanks the Algebra and Geometry group at USFC for their hospitality. V.L.~is partially supported by the FAPESP post-doctoral grant 2015/07766-4. Moreover, this work was partially supported by PRIN ``Geo\-metria delle variet\`a algebriche,"  by the University of Genoa's project ``Aspetti matematici della teoria dei campi interagenti e quantizzazione per deformazione," and by GNSAGA-INdAM. U.B. is a member of the VBAC group.
 
 \bigskip\section{Background material} \label{background}
The construction of the ADHM data  is based on the description of the moduli spaces of framed sheaves on the Hirzebruch surfaces $\Sigma_n$ in terms of monads that was  worked out in \cite{bbr}. We briefly review the basic ingredients of that construction.
The $n$-th Hirzebruch surface $\Sigma_n$  is the projective closure of the total space $X_n$ of the line bundle $\mathcal{O}_{\mathbb{P}^1}(-n)$; we shall assume $n > 0$. We denote by $F$ the class in $\Pic(\Sigma_n)$  of the fibre of the natural ruling $\Sigma_n\longrightarrow\Pu$, and by $H$ and $E$ the classes of the sections squaring to $n$ and $-n$, respectively. We shall denote  $\On(p,q) = \On(pH+qF).$
We fix a curve $\ell_{\infty}\simeq\Pu$ in $\Sigma_n$ belonging to the class $H$ and call it  the ``line at infinity.''
A framed sheaf on $\Sigma_n$ is a pair $(\E, \theta)$, where $\E$ is a torsion-free sheaf which 
is trivial along  $\ell_{\infty}$, and $\theta \colon \E\vert_{\ell_{\infty}}\stackrel{\sim}{\longrightarrow}\Ol_{\li}^{\oplus r}$ is an isomorphism, where $r$ is  the rank of $\E$. A morphism between   framed sheaves $(\E, \theta)$, $(\E', \theta')$ is by definition a morphism $\Lambda\colon \E \longrightarrow \E'$ such that  
$\theta'\circ\Lambda\vert_{\ell_{\infty}} = \theta$. The moduli space parameterizing isomorphism classes of framed sheaves $(\E, \theta)$ on $\Sigma_n$ with Chern character $\textrm{ch}(\E) = (r, aE, -c -\frac{1}{2} na^2)$,  where $r, a, c \in \Z$ and $r\geq 1$, will be denoted   $\M^{n}(r,a,c)$. We normalize   the framed sheaves   so that $0\leq a\leq r-1$.

We recall (see e.g. \cite[Definition~II.3.1.1]{Ok}) that a monad $M$ on a scheme $X$ is a three-term  complex of locally free $\mathcal O_X$-modules of finite rank, having nontrivial cohomology only in the middle term:
\begin{equation*}
M\,:\qquad\xymatrix{
0 \ar[r]& \mathcal{U}\ar[r]^a &
\mathcal{V}
\ar[r]^b &\mathcal{W} \ar[r] &0\,.
}
\end{equation*}
The cohomology of the monad 
is a coherent $\mathcal O_X$-module.
 A \emph{morphism} (\emph{isomorphism}) \emph{of monads} is     a morphism (isomorphism) of complexes.

As proved in \cite{bbr},
a framed sheaf 
$(\E, \theta)$  on $\Sigma_n$, having   invariants $(r,a,c)$,  is isomorphic to the cohomology of a monad 
\begin{equation}
\xymatrix{
M(\alpha,\beta):&0 \ar[r] & \Uk \ar[r]^-{\alpha} & \Vk \ar[r]^-{\beta} & \Wk \ar[r] & 0
}\,, \label{fundamentalmonad}
\end{equation}
where $\vec{k}$ denotes the quadruple $ (n,r,a,c)$, and we have set 
\begin{equation} 
\Uk:=\On(0,-1)^{\oplus k_1},\quad
\Vk:=\On(1,-1)^{\oplus k_2} \oplus \On^{\oplus k_4},\quad
\Wk:=\On(1,0)^{\oplus k_3}\,,
\end{equation}
with
\begin{equation} 
 k_1=c+\dfrac{1}{2}na(a-1),\quad
k_2=k_1+na,\quad
k_3=k_1+(n-1)a,\quad
k_4=k_1+r-a\,.
\label{k_i}
\end{equation}

The set $\Lk$ of pairs in $\Hom(\Uk,\Vk)\oplus\Hom(\Vk,\Wk)$ fitting into the complex  \eqref{fundamentalmonad},
such that the cohomology of the complex is torsion-free and trivial at infinity, is a smooth algebraic variety. One can introduce a principal $\GL(r,\Com)$-bundle $\Pk$ over $\Lk$, whose fibre at a point $(\alpha,\beta)$ is   identified with the space of framings for the corresponding cohomology of \eqref{fundamentalmonad}. 
The algebraic group $\Gk=\Aut(\Uk)\times\Aut(\Vk)\times\Aut(\Wk)$ 
acts freely on $\Pk$, and the moduli space $\M^{n}(r,a,c)$ can be described as the quotient $\Pk/\Gk$ \cite[Theorem 3.4]{bbr}. This space is nonempty if and only if $c + \frac{1}{2} na(a-1) \geq 0$, and when nonempty,   is a smooth algebraic variety of dimension $2rc + (r-1) na^2$.

When  $r=1$ we can assume $a=0$, so that the double dual $\E^{\ast\ast}$ of $\E$  is isomorphic to the structure  sheaf $\On$. As a consequence, since $\E$ is trivial on $\ell_{\infty}$,  the mapping carrying $\E$ to the schematic support of $\On/\E$ yields an isomorphism
\begin{equation} 
\M^{n}(1,0,c) \simeq \operatorname{Hilb}^c (\Sigma_n \setminus \ell_{\infty}) = \operatorname{Hilb}^c (X_{n})\,,
\label{eqMH}
\end{equation}
where $X_{n}$ is the total space of the line bundle $\Ol_{\Pu}(-n)$. We shall freely use the isomorphism \eqref{eqMH}  in the rest of the paper.

We also  fix some notation about quiver representations (see \cite{Gin} for details). A quiver $\mathcal{Q}$ is a finite oriented graph, given by a set of vertices $I$ and a set of arrows $E$. The path algebra $\Com\mathcal{Q}$ is the $\Com$-algebra with basis the paths in $\mathcal{Q}$ and with a product given by the concatenation of paths whenever possible, zero otherwise. Usually  one includes among the generators of $\Com\mathcal{Q}$ a complete set of orthogonal idempotents $\{e_{i}\}_{i\in I}$: this can be considered a subset of $E$ by regarding $e_{i}$ as a loop of ``length zero'' starting and ending at the $i$-th vertex. A  (complex) representation of a quiver $\mathcal{Q}$ is a pair $(V,X)$, where $V= \bigoplus_{i\in I}V_{i}$ is an $I$-graded complex vector space and $X=(X_{a})_{a\in E}$ is a collection of linear maps such that $X_{a}\in\Hom_{\Com}(V_{i},V_{j})$ whenever the arrow $a$ starts at the vertex $i$ and terminates at the vertex $j$. We say that a representation $(V,X)$ is supported by $V$, and denote by $\operatorname{Rep}(\mathcal{Q},V)$ the space of representations of $\mathcal{Q}$ supported by 
$V$. Morphisms and direct sums of representations are defined in an obvious way; it can be shown that the abelian category of complex representations of $Q$ is equivalent to the category of left $\Com\mathcal{Q}$-modules. In particular, 
a sub-representation of a given representation $(V,X)$ is a pair $(S,Y)$, where $S$ is an $I$-graded subspace of $V$ which is preserved by the linear maps $X$, and $Y$ is the restriction of $X$ to $S$.  

We  consider only finite-dimensional representations. If $\dim_{\Com} V_{i}=v_{i}$,
a representation $(V,X)$ of $\mathcal{Q}$ is said to be $\vec{v}$-dimensional, where $\vec{v}=(v_{i})_{i\in I}\in\mathbb{N}^{I}$.  With an abuse of notation, after fixing a $\vec{v}$-dimensional vector space $V$, we write $\operatorname{Rep}(\mathcal{Q},\vec{v})$ instead of $\operatorname{Rep}(\mathcal{Q},V)$. {When we consider a framed quiver (see Section \ref{quiver} for this notion), for notational convenience we shall separately denote by $\vec w$ the dimension vector of the vector spaces associated with the framing vertices.}

More generally one can define the representations of a quotient algebra $B=\Com\mathcal{Q}/J$, for some ideal $J$ of the path algebra $\Com\mathcal{Q}$. These are representations $(V,X)$ of $\mathcal{Q}$, whose linear maps $X=(X_{a})_{a\in E}$ satisfy the relations given by the elements of $J$. The abelian category of complex representations of $B$ is equivalent to the category of left $B$-modules.  We denote by $\operatorname{Rep}(B,\vec{v})$ the space of representations of $B$ supported by a given $\vec{v}$-dimensional vector space $V$. 
There is a natural action of $\prod_i\GL(v_i)$ on $\operatorname{Rep}(B,\vec{v})$ given by change of basis. One would like to consider the space of isomorphism classes of $\vec{v}$-dimensional representations of $B$, but unfortunately in most cases this space is   ``badly behaved.'' To overcome this drawback, following A.~King's approach \cite{king}, one introduces a notion of (semi)stability depending on the choice of a parameter $\vartheta\in\mathbb{R}^{I}$, considers the subset $\operatorname{Rep}(B,\vec{v})^{ss}_\vartheta$ of $\operatorname{Rep}(B,\vec{v})$ consisting of semistable representations, and   takes the corresponding GIT quotient $\operatorname{Rep}(B,\vec{v})^{ss}_\vartheta/\!/_{\!\vartheta}\prod_i\GL(v_i)$. 

\bigskip
\section{ADHM data}\label{sectionmain}

In this section we construct ADHM data for the Hilbert scheme of points of the total spaces $X_n$
of the line bundles $\Ol_{\Pu}(-n)$. First we show that the Hilbert schemes can be covered by open subsets, each of which is isomorphic to the Hilbert scheme of $\mathbb C^2$,
and therefore admits Nakajima's ADHM description; then we prove that these ``local data'' can be glued together to provide ADHM data for the Hilbert schemes of $X_n$.

We denote by $P^{n}(c)$ the subset of the vector space $\End(\Com^{c})^{\oplus n+2}\oplus\Hom(\Com^{c},\Com)$ whose elements $\left(A_1,A_2;C_1,\dots,C_{n};e\right)$ satisfy the   conditions
\begin{enumerate}
 \item[(P1)]
\begin{equation*}
\begin{cases}
A_1C_1A_{2}=A_2C_{1}A_{1}&\qquad\text{when $n=1$}\\[15pt]
\begin{aligned}
A_1C_q&=A_2C_{q+1}\\
C_qA_1&=C_{q+1}A_2
\end{aligned}
\qquad\text{for}\quad q=1,\dots,n-1&\qquad\text{when $n>1$;}
\end{cases}
\end{equation*} \smallskip
\item[(P2)]
$A_1+\lambda A_2$ is a \emph{regular pencil} of matrices; equivalently, there exists $[\nu_{1},\nu_{2}]\in\Pu$ such that $\det(\nu_1A_1+\nu_2A_2)\neq0$;\smallskip
\item[(P3)]
for all values of the parameters $\left([\lambda_1,\lambda_2],(\mu_1,\mu_{2})\right)\in\Pu\times\Com^{2}$ such that
\begin{equation*}
\lambda_{1}^{n}\mu_{1}+\lambda_{2}^{n}\mu_{2}=0
\end{equation*}
there is no nonzero vector $v\in\Com^c$ such that
\begin{equation*}
\left\{
\begin{array}{l}
C_{1}A_{2}v=-\mu_1v\\
C_{n}A_{1}v=(-1)^n\mu_2v\\
v\in\ker e
\end{array}\right.
\qquad\text{and}\qquad\left(\lambda_2{A_1}+\lambda_1{A_2}\right)v=0\,.
\end{equation*}\smallskip
\end{enumerate}
The group 
$\GL(c, \Com)\times \GL(c, \Com)$  acts on $P^n(c)$ according to the rule
\begin{equation} (A_i, \, C_j, e ) \mapsto (\phi_2A_i\phi_1^{-1},\, 
\phi_1C_j\phi_2^{-1},\,
e\phi_1^{-1})
\label{eqrho}
\end{equation}
for $i=1,2$, $j=1,\dots,n$, $ (\phi_1,\phi_2)\in\GL(c, \Com)\times \GL(c, \Com)$.
 
 \begin{thm}
\label{thm0}
$P^{n}(c)$ is a principal $\GL(c, \Com)\times \GL(c, \Com)$-bundle over $\operatorname{Hilb}^c (X_{n})$.
\end{thm}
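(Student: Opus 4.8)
The plan is to derive the statement from the monad description of $\M^{n}(1,0,c)$ recalled above, transported to $\Hilb^c(X_n)$ through the isomorphism \eqref{eqMH}. Specializing the quadruple to $\vec k=(n,1,0,c)$, the formulas \eqref{k_i} give $k_1=k_2=k_3=c$ and $k_4=c+1$, so that the terms of the monad \eqref{fundamentalmonad} become $\Uk=\On(0,-1)^{\oplus c}$, $\Vk=\On(1,-1)^{\oplus c}\oplus\On^{\oplus(c+1)}$ and $\Wk=\On(1,0)^{\oplus c}$. By \cite[Theorem 3.4]{bbr} the group $\Gk=\Aut(\Uk)\times\Aut(\Vk)\times\Aut(\Wk)$ acts freely on $\Pk$ with geometric quotient $\Pk/\Gk\cong\M^{n}(1,0,c)$; moreover, since $\End(\On(0,-1))\cong\End(\On(1,0))\cong\Com$, both $\Aut(\Uk)$ and $\Aut(\Wk)$ equal $\GL(c,\Com)$.

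First I would coordinatize the monad maps, decomposing $\alpha$ and $\beta$ along the summands of $\Vk$ and computing the spaces of homomorphisms between line-bundle summands; these reduce to the global sections $H^0(\On(1,0))$ and $H^0(\On(0,1))$ of the appropriate twists, while the single nonzero off-diagonal block inside $\Aut(\Vk)$ is governed by $H^0(\On(1,-1))$. This expresses $(\alpha,\beta)$, together with the framing carried by $\Pk$, as a tuple of matrices. I would then use the gauge subgroup $\Aut(\Vk)$ acting on the middle term (which is strictly larger than $\GL(c,\Com)\times\GL(c+1,\Com)$ precisely because of that off-diagonal block) to bring every point of $\Pk$ into a canonical normal form whose residual free parameters are exactly a tuple $(A_1,A_2;C_1,\dots,C_n;e)\in\End(\Com^c)^{\oplus n+2}\oplus\Hom(\Com^c,\Com)$, the map $e$ recording the framing. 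The decisive point is that the stabilizer of this normal form inside $\Gk$ is precisely $\Aut(\Uk)\times\Aut(\Wk)\cong\GL(c,\Com)\times\GL(c,\Com)$, acting through the rule \eqref{eqrho}; this identifies the normalized data $\GL(c,\Com)\times\GL(c,\Com)$-equivariantly with a subset of $\End(\Com^c)^{\oplus n+2}\oplus\Hom(\Com^c,\Com)$.

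Next I would match the constraints. The three requirements cutting out $\Lk$ --- that $(\alpha,\beta)$ form a complex, i.e.\ $\beta\circ\alpha=0$; that $\alpha$ be a subbundle inclusion and $\beta$ a fibrewise surjection on the relevant locus; and that the cohomology be torsion-free and trivial along $\li$ --- should translate under the coordinatization into (P1), (P2) and (P3) respectively. Concretely, $\beta\alpha=0$ unwinds into the bilinear relations of (P1); the maximal-rank requirement for the monad maps becomes the regular-pencil condition (P2); and torsion-freeness together with triviality at infinity becomes the fibrewise nonvanishing statement (P3), in which the projective parameter $([\lambda_1,\lambda_2],(\mu_1,\mu_2))$ subject to $\lambda_1^n\mu_1+\lambda_2^n\mu_2=0$ records the point of $\Sigma_n$ --- including those on $\li$ --- at which a destabilizing vector could a priori appear, and $v\in\ker e$ encodes the interaction with the framing. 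This identifies $P^n(c)$ with the normalized data space, compatibly with the two commuting $\GL(c,\Com)$-actions.

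Finally, for the principal-bundle property, the freeness of the $\Gk$-action on $\Pk$ and the geometric quotient $\Pk/\Gk\cong\M^{n}(1,0,c)\cong\Hilb^c(X_n)$ of \cite{bbr} descend, after the reduction above, to a free $\GL(c,\Com)\times\GL(c,\Com)$-action on $P^n(c)$ with geometric quotient $\Hilb^c(X_n)$. It remains to produce local trivializations, for which I would use the cover announced in the introduction: the intermediate quotient $P^n(c)/\GL(c,\Com)$ by the second factor is assembled from $c+1$ Zariski-open charts, each isomorphic to Nakajima's space of ADHM data for $\Hilb^c(\Com^2)$, which is a principal $\GL(c,\Com)$-bundle over $\Hilb^c(\Com^2)$ \cite{Nakabook}. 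Hence both $P^n(c)\to P^n(c)/\GL(c,\Com)$ and $P^n(c)/\GL(c,\Com)\to\Hilb^c(X_n)$ are principal $\GL(c,\Com)$-bundles, and since the two actions in \eqref{eqrho} commute, the composite is a principal $\GL(c,\Com)\times\GL(c,\Com)$-bundle. I expect the main obstacle to be the gauge normalization of the second and third steps: showing that $\Aut(\Vk)$ and the framing suffice to reach the normal form and that it is unique up to $\GL(c,\Com)\times\GL(c,\Com)$, and above all that the geometric torsion-free and triviality-at-infinity conditions translate faithfully and exactly into the purely algebraic statements (P2)--(P3); verifying (P3) in particular demands tracking the fibres of $\Sigma_n\to\Pu$ and the divisor $\li$ carefully through the coordinatization.
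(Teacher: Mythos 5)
Your overall architecture (monad description from \cite{bbr}, gauge normalization, the $c+1$ charts isomorphic to ADHM data for $\Hilb^c(\Com^2)$) points in the right direction, but the central mechanism you propose --- a \emph{global} normal form on $\Pk$ whose residual stabilizer in $\Gk$ is $\Aut(\Uk)\times\Aut(\Wk)\cong\GL(c,\Com)\times\GL(c,\Com)$ --- does not exist, and this is a genuine gap rather than a detail. Any normalization of the monad data begins by trivializing $\beta_1$ along a fibre, i.e.\ imposing $\det\beta_{10}^{(m)}\neq0$, which is equivalent to $\E|_{F_m}\simeq\Ol_{F_m}$; for each fixed $m$ this fails on a nonempty closed locus, which is precisely why the paper must choose $c+1$ fibres and work on the open cover $\{U^{n,c}_m\}$ throughout. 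Moreover, the residual stabilizer of the (local) normal form is computed in Lemma \ref{lemmaquot2} to be a \emph{single} copy of $\GL(c,\Com)$, embedded diagonally in $\Gk$ via \eqref{eqiota} and acting on Nakajima's data $\T(c)$ --- not $\Aut(\Uk)\times\Aut(\Wk)$. The second $\GL(c,\Com)$ factor in the structure group of $P^n(c)$ is not a stabilizer of any slice: it enters as the extra coordinate $A_{2m}\in\GL(c,\Com)$ in the chart isomorphism $\zeta_m\colon P^n(c)_m\to\T(c)\times\GL(c,\Com)$ of Proposition \ref{lmTomega}, and its purpose is to absorb the twist $A_{2l}=A_{2m}(c_{m-l}\bm{1}_c-s_{m-l}B_m)$ so that the local pieces glue into a globally nontrivial bundle. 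Your account of where the group $\GL(c,\Com)\times\GL(c,\Com)$ comes from would therefore not go through.

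Relatedly, the steps you defer as ``the main obstacle'' are in fact the entire content of the proof, and your dictionary for them is not quite right. The identification of each $U^{n,c}_m$ with $\T(c)/\GL(c,\Com)$ together with the explicit transition functions is Proposition \ref{pro1}, whose proof occupies all of Appendix \ref{appa} (closed immersion $j_m$, orbit-by-orbit gauge fixing, Lemma \ref{lemmaquot}); the translation of (P1)--(P3) is done chart by chart via $\zeta_m$ (Lemmas \ref{propSyst} and \ref{lmP3}), where in particular (P3) is \emph{not} a direct fibrewise restatement of torsion-freeness and triviality at infinity but is shown to be equivalent, on the chart where $\det A_{2m}\neq0$, to Nakajima's co-stability condition (T2) for the derived data $(B_m,E_m,e)$; and the compatibility of the two systems of transition functions (Proposition \ref{proGlue}) is what allows the local principal bundles $P^n(c)_m\simeq\T(c)\times\GL(c,\Com)\to U^{n,c}_m$ to assemble into the global one. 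Without these ingredients --- and with the stabilizer claim corrected --- your argument does not yet establish the theorem.
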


The remainder of this Section is devoted to proving Theorem \ref{thm0}.
At first, we provide an ADHM description for each open set of an open cover of $\operatorname{Hilb}^c (X_{n})$. 
If we fix $c+1$ distinct fibres $F_0,\dots,F_{c}\in F$, for any $[(\E,\theta)]\in \operatorname{Hilb}^c (X_{n})$ there exists at least one $m\in\{0,\dots,c\}$ such that $\E|_{F_{m}}\simeq\Ol_{F_{m}}$. We choose the fibres $F_{m}$
as the closed subvarieties cut in
\begin{equation}
\Sigma_n = \left\{([y_1,y_2],[x_1,x_2,x_3])\in\mathbb{P}^1\times\mathbb{P}^2\;|\;x_1y_1^n=x_2y_2^n \right\}
\label{eqSn}
\end{equation}
by the equations
\begin{equation}
F_m=\{[y_1,y_2]=[c_m,s_m]\},\qquad m=0,\dots,c\,,
\label{eq12}
\end{equation}
where
\begin{equation}
c_m=\cos\left(\pi\frac{m}{c+1}\right), \qquad s_m=\sin\left(\pi\frac{m}{c+1}\right)\,.
\label{eqcmsm}
\end{equation}
We obtain in this way  an open cover $\left\{U^{n,c}_m\right\}_{m=0,\dots,c}$ of $\operatorname{Hilb}^c (X_{n})$ by letting
\begin{equation}\label{U} 
U^{n,c}_m:=\left\{[(\E,\theta)]\in\operatorname{Hilb}^c (X_{n})\,\bigm\vert\,
\E_{\vert F_m} \simeq \Ol_{F_m}
\right\}.
\end{equation}
Each of these spaces is isomorphic to  the Hilbert scheme of points of $\Com^2$, so that it admits Nakajima's ADHM description \cite[Theorem.~1.9]{Nakabook}
in terms of two $c\times c$ matrices $b_1$, $b_2$ and a row $c$-vector $e$,
satisfying the conditions 
\begin{itemize}
\item[(T1)]
$[b_{1},b_{2}]=0\,;
$ \smallskip
\item[(T2)]
for all $(z,w)\in\Com^2$ there is no nonzero vector $v\in\Com^{c}$ such that
\begin{equation*}
\left\{
\begin{array}{l}
b_{1}v=zv\\
b_{2}v=wv\\
v\in\ker e\,.
\end{array}
\right.
\end{equation*}
\end{itemize} 
The space of triples  $( b_1,b_2,e)$ satisfying the previous two conditions will be denoted by $\T(c)$. Elements $\phi$ of the 
 group   $\GL(c, \Com)$ act on $\T(c)$ according to the rule
\begin{equation}
( b_1,b_2,e) \mapsto  (\phi \, b_{1}\, \phi^{-1}, \phi\, b_{2}\, \phi^{-1}, e\,\phi^{-1})\,.
\label{eqGL(c)onK}
\end{equation} 
 {Note that condition (T2) is the so-called \emph{co-stability condition}, while Nakajima in \cite[Theorem.~1.9]{Nakabook} used the \emph{stability condition} (which is satisfied by the transpose matrices $({}^{t}b_{1},{}^{t}b_{2},{}^{t}e)$). In particular, t}his explains the difference between \eqref{eqGL(c)onK} and the $\GL(c, \Com)$-action used by Nakajima.

 The ADHM data for the open set $U^{n,c}_m$ will be denoted by $(b_{1m},b_{2m},e_{m})$; 
 the next Proposition gives 
the transition functions on the intersections.
\begin{prop}
\label{thm1}
The intersections $U^{n,c}_{ml}=U^{n,c }_m\,\cap \,U^{n,c }_l$ are characterized by the conditions
$$
\det\left(c_{m-l}\bm{1}_c-s_{m-l}b_{1m}\right) \neq0 \,,
$$
where $c_{m}$ and $s_{m}$ are the numbers defined in eq.~\eqref{eqcmsm}. On any of these intersections, the ADHM data are related by the equations
\begin{equation*}
\begin{cases}
b_{1l}&=\left(c_{m-l}\bm{1}_c-s_{m-l}b_{1m}\right)^{-1}\left(s_{m-l}\bm{1}_c+c_{m-l}b_{1m}\right)\\
b_{2l}&=\left(c_{m-l}\bm{1}_c-s_{m-l}b_{1m}\right)^{n}b_{2m}\\
e_{l}&=e_{m}\,.
\end{cases}
\end{equation*}
\label{pro1}
\end{prop}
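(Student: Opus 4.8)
The plan is to make the identification of the ADHM data intrinsic, so that the two transition formulas become nothing more than the effect of a change of affine coordinates on $X_n$, read off via the multiplication action on the structure sheaf $\Ol_Z$ of the length-$c$ subscheme $Z$. First I would make the isomorphism $U^{n,c}_m\simeq\operatorname{Hilb}^c(\Com^2)$ completely explicit. A point of $\operatorname{Hilb}^c(X_n)$ is an ideal sheaf $\I_Z$, and $\E|_{F_m}\simeq\Ol_{F_m}$ holds precisely when $Z\cap F_m=\varnothing$, so $U^{n,c}_m=\{Z\mid Z\cap F_m=\varnothing\}$. Deleting $F_m$ leaves the total space of $\Ol_{\Pu}(-n)$ over $\Pu\setminus\{[c_m,s_m]\}\simeq\Com$, whence $X_n\setminus F_m\simeq\Com^2$; on it I fix the base coordinate $z_m=\dfrac{c_my_1+s_my_2}{s_my_1-c_my_2}$ (which is $\infty$ exactly on $F_m$) and the fibre coordinate $w_m$ induced by a trivialisation of $\Ol_{\Pu}(-n)$ over this chart. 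Setting $V:=H^0(\Ol_Z)$, Nakajima's description then makes the triple \emph{canonical}: $b_{1m}$ is multiplication by $z_m$, $b_{2m}$ is multiplication by $w_m$ on $V$, and $e_m$ is the covector encoding the framing, i.e.\ dual to the cyclic vector $1\in\Ol_Z$. The decisive point is that both $V$ and the element $1\in\Ol_Z$ are attached to $Z$ independently of the chart; using the \emph{same} identification $V\simeq\Com^c$ for every index $m$ kills the $\GL(c,\Com)$ gauge freedom and is what lets the transition formulas hold without any conjugation.

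Next I would analyse the overlap. A subscheme $Z\in U^{n,c}_m$ also lies in $U^{n,c}_l$ iff $Z\cap F_l=\varnothing$, i.e.\ iff the base point $[c_l,s_l]$ of $F_l$ is not attained by the image of $Z$ in $\Pu$. Using $c_mc_l+s_ms_l=c_{m-l}$ and $s_mc_l-c_ms_l=s_{m-l}$, this base point has $z_m$-coordinate $z_m([c_l,s_l])=c_{m-l}/s_{m-l}$; hence $Z\cap F_l=\varnothing$ is equivalent to the invertibility in $\Ol_Z$ of the regular function $c_{m-l}-s_{m-l}z_m$, that is, to $\det(c_{m-l}\bm{1}_c-s_{m-l}b_{1m})\neq0$. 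This is exactly the claimed characterisation of $U^{n,c}_{ml}$ (and it reduces to $\det\bm{1}_c\neq0$ when $m=l$, since $s_{m-l}=0$ only for $m=l$).

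Finally, for the transition formulas I would note that on $X_n\setminus(F_m\cup F_l)$ the two charts are related by the fractional linear map coming from the base (the reflections defining $z_m,z_l$ compose into the rotation by $\pi(m-l)/(c+1)$) together with the $\Ol_{\Pu}(-n)$ cocycle on the fibre, namely
\[
z_l=\frac{c_{m-l}z_m+s_{m-l}}{c_{m-l}-s_{m-l}z_m},\qquad
w_l=(c_{m-l}-s_{m-l}z_m)^n\,w_m .
\]
Since multiplication is a ring homomorphism $\Ol_Z\to\End(V)$, applying it to these identities — legitimate because $c_{m-l}-s_{m-l}z_m$ is invertible in $\Ol_Z$ on the overlap and every rational function of $z_m$ commutes with $b_{1m}$ — produces precisely $b_{1l}=(c_{m-l}\bm{1}_c-s_{m-l}b_{1m})^{-1}(s_{m-l}\bm{1}_c+c_{m-l}b_{1m})$ and $b_{2l}=(c_{m-l}\bm{1}_c-s_{m-l}b_{1m})^n b_{2m}$, while $e_l=e_m$ because the cyclic vector $1\in\Ol_Z$ and the framing are chart-independent.

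I expect the main obstacle to be the careful extraction of the fibre coordinate $w_m$ from the explicit equations \eqref{eqSn} and the verification that its cocycle is exactly the $n$-th power $(c_{m-l}-s_{m-l}z_m)^n$ (the routine M\"obius computation for $z_m$ being comparatively harmless); in parallel, one must keep the trigonometric sign conventions, through the angle-difference identities for $c_{m-l}$ and $s_{m-l}$, aligned so that the denominator governing invertibility is precisely $c_{m-l}\bm{1}_c-s_{m-l}b_{1m}$, matching membership in $U^{n,c}_l$.
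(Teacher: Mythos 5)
Your argument is essentially sound, but it follows a genuinely different route from the paper's. The paper proves this proposition in Appendix A entirely inside the monad picture: it builds explicit closed immersions $j_m\colon\T(c)\to P_{\vec{k},m}$ into the space of monad data, shows by an orbit--stabilizer analysis (Lemmas \ref{lemmaquot} and \ref{lemmaquot2}) that these induce the isomorphisms $\eta_m\colon\T(c)/\GL(c,\Com)\to U^{n,c}_m$, and then, for a point of $\T(c)_{m,l}$, computes the unique element of $\Gk$ carrying $j_m(\vec{b}_m)$ into $\operatorname{im}j_l$; the transition formulas are read off from that normalization. You instead make the local identification intrinsic --- $b_{1m},b_{2m}$ as multiplication operators on $H^0(\Ol_Z)$ in affine coordinates adapted to $F_m$ --- and obtain the transitions as the effect of the coordinate change on $X_n$, which is conceptually cleaner and makes both the M\"obius form of $b_{1l}$ and the $n$-th power in $b_{2l}$ transparent; your identification of the overlap condition with the invertibility of $c_{m-l}-s_{m-l}z_m$ in $\Ol_Z$, and your sign conventions for $z_m$, do match the paper's. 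Two caveats keep this from being a drop-in replacement. First, the proposition as used downstream (in Proposition \ref{proGlue} and Theorem \ref{thm0}) refers to the specific identifications $\eta_m$ produced by the monadic construction, so you must additionally check that your intrinsic identification coincides with $\eta_m$ --- equivalently, that the cohomology of the monad $j_m(b_1,b_2,e)$ is the ideal sheaf of the subscheme whose multiplication operators in the coordinates $z_m,w_m$ are the given triple (up to the transpose implicit in the co-stability convention, which is harmless here since $b_{1m}$ and $b_{2m}$ commute). Without that compatibility your computation only shows that \emph{some} consistent system of local ADHM descriptions has these transition functions. Second, you explicitly defer the extraction of the fibre trivialization and the verification that its cocycle is $(c_{m-l}-s_{m-l}z_m)^n$; since that is exactly where the exponent $n$ and the matrix $c_{m-l}\bm{1}_c-s_{m-l}b_{1m}$ in the formula for $b_{2l}$ come from, a complete write-up must carry it out rather than flag it as an obstacle.
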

\begin{proof} The proof of this result is given in Appendix \ref{appa}.
\end{proof} 
We introduce the matrices
\begin{equation}
\begin{aligned}
A_{1m}&=c_mA_1-s_mA_2\,,\qquad
A_{2m}=s_mA_1+c_mA_2\,,\\
E_{m}&=\left[\sum_{q=1}^{n}\binom{n-1}{q-1}c_{m}^{n-q}s_{m}^{q-1}C_q\right]A_{2m}\,,
\end{aligned}
\label{eq410}
\end{equation}
where $m=0,\dots,c$. Since the polynomial $\det(\nu_{1}A_1+\nu_{2}A_2)$ has at most $c$ distinct roots in $\Pu$, the $\GL(c, \Com)\times \GL(c, \Com)$-invariant open subsets
\begin{equation}
P^n(c)_m=\left\{\left(A_1,A_2;C_{1},\dots,C_{n};e\right)\in P^n(c)\left|
\ \det A_{2m}\neq0
\right.\right\}\,,\qquad m=0,\dots,c\,,
\label{pcnm}
\end{equation}
cover $P^{n}(c)$.
If  we also define  the matrices
$B_{m}=A_{2m}^{-1}A_{1m}$,
the linear data $(B_{m},E_{m},e;A_{2m})$ provide local affine coordinates for $P^n(c)$.
\begin{prop}
\label{lmTomega}
The morphism
\begin{equation*}
\begin{array}{rccl}
\zeta_{m}\colon&P^{n}(c)_{m}&\longrightarrow&\left[\End(\Com^{c})^{\oplus 2}\oplus\Hom(\Com^{c},\Com)\right]\times\GL(c, \Com)\\
&(A_{1},A_{2};C_{1},\dots,C_{n};e)&\longmapsto&
\left(B_{m},E_{m},e;A_{2m}\right)
\end{array}
\end{equation*}
is an isomorphism onto $\T(c)\times\GL(c, \Com)$. The induced $\GL(c, \Com)\times \GL(c, \Com)$-action is given by
\begin{equation}
(B_{m},E_{m},e;A_{2m}) \mapsto (\phi_{1}B_{m}\phi_{1}^{-1},\,
\phi_{1}E_{m}\phi_{1}^{-1},\,e\phi_{1}^{-1};\,\phi_{2}A_{2m}\phi_{1}^{-1})\,.
\label{eqrhol}
\end{equation}
\end{prop}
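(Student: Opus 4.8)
The plan is to prove that $\zeta_m$ is an isomorphism onto $\T(c)\times\GL(c,\Com)$ by exhibiting an explicit inverse, and then to obtain \eqref{eqrhol} by direct substitution. On $P^n(c)_m$ the matrix $A_{2m}$ is invertible, so $B_m=A_{2m}^{-1}A_{1m}$ and the bracketed sum in \eqref{eq410}, which I denote $\widetilde C_m$ (so that $E_m=\widetilde C_m A_{2m}$), are regular; hence $\zeta_m$ is a morphism whose $\GL(c,\Com)$-component is the tautological factor $A_{2m}$. What remains is to check that $(B_m,E_m,e)$ lands in $\T(c)$, i.e.\ satisfies (T1) and (T2), and that the assignment can be inverted.

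The whole argument rests on a covariance property of the relations (P1). Writing $A_1=c_mA_{1m}+s_mA_{2m}$, $A_2=-s_mA_{1m}+c_mA_{2m}$ and introducing the rotated coefficients $C_{1m},\dots,C_{nm}$ obtained by substituting $(\lambda_1,\lambda_2)\mapsto(c_m\lambda_1-s_m\lambda_2,\ s_m\lambda_1+c_m\lambda_2)$ in the binary form $\sum_q\binom{n-1}{q-1}\lambda_1^{\,n-q}\lambda_2^{\,q-1}C_q$, a direct computation with binomial coefficients shows that (P1) for $(A_1,A_2;C_1,\dots,C_n)$ is equivalent to the \emph{same} relations for the rotated datum $(A_{1m},A_{2m};C_{1m},\dots,C_{nm})$; by construction $C_{1m}=\widetilde C_m$. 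Feeding the rotated relations $A_{1m}C_{1m}=A_{2m}C_{2m}$ and $C_{1m}A_{1m}=C_{2m}A_{2m}$ into $B_mE_m-E_mB_m$ collapses it to $0$, which is (T1). Moreover, since $A_{2m}$ is invertible the rotated relations yield the chain $C_{(q+1)m}=B_mC_{qm}$, so $C_{qm}=B_m^{\,q-1}\widetilde C_m=B_m^{\,q-1}E_mA_{2m}^{-1}$ for every $q$; applying the inverse substitution recovers $C_1,\dots,C_n$, while $A_1,A_2$ are recovered from $A_{1m}=A_{2m}B_m$ and $A_{2m}$ by the inverse rotation. This proves injectivity and dictates the inverse morphism $(b_1,b_2,e;g)\mapsto(A_1,A_2;C_1,\dots,C_n;e)$ given by $A_{2m}=g$, $A_{1m}=gb_1$, $C_{qm}=b_1^{\,q-1}b_2g^{-1}$ followed by the inverse rotations: the output satisfies (P1) because the chain relations hold by construction and $[b_1,b_2]=0$, and it satisfies (P2) because $A_{2m}=g$ is invertible. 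Unwinding the definitions shows the two compositions are identities.

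The genuinely delicate point, which I expect to be the main obstacle, is the equivalence of the co-stability condition (T2) for $(B_m,E_m,e)$ with the conjunction of (P2) and (P3). Here I would show that a nonzero $v$ satisfying the pencil condition $(\lambda_2A_1+\lambda_1A_2)v=0$ is exactly an eigenvector of $B_m$, its eigenvalue $z$ being the image of $[\lambda_1,\lambda_2]$ under the M\"obius transformation induced by the rotation $(c_m,s_m)$; correspondingly $E_mv=wv$ translates, through the relations (P1) evaluated on $v$, into $C_1A_2v=-\mu_1v$ and $C_nA_1v=(-1)^n\mu_2v$, the constraint $\lambda_1^n\mu_1+\lambda_2^n\mu_2=0$ reflecting the defining equation \eqref{eqSn} of $\Sigma_n$ read along the eigenline. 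Thus destabilizing triples $(z,w,v)$ for (T2) correspond bijectively to destabilizing data for (P3), with (P2) ensuring the correspondence sweeps out the whole parameter space; controlling this dictionary at the boundary values of the parameters, where the relevant denominators vanish, is the step requiring the most care. Finally, \eqref{eqrhol} follows by substituting \eqref{eqrho} into \eqref{eq410}: one gets $A_{1m},A_{2m}\mapsto\phi_2A_{1m}\phi_1^{-1},\ \phi_2A_{2m}\phi_1^{-1}$, hence $B_m\mapsto\phi_1B_m\phi_1^{-1}$, $\widetilde C_m\mapsto\phi_1\widetilde C_m\phi_2^{-1}$ and therefore $E_m\mapsto\phi_1E_m\phi_1^{-1}$, which is exactly \eqref{eqrhol}.
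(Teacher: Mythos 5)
Your plan follows essentially the same route as the paper's proof: your ``rotation covariance'' is encoded there by the matrices $\sigma^{n-1}_m$ of Lemma \ref{propSyst}, which reduce the $C_q$'s to the single free parameter $D_m$ (whence injectivity, since $E_m=D_mA_{2m}$); Lemma \ref{lmP3} gives $[B_m,E_m]=0$ and the (P3)$\Leftrightarrow$(T2) dictionary; and surjectivity is obtained from the same explicit inverse \eqref{eqACe}. The delicate point you flag is settled exactly as you anticipate --- at the excluded parameter $[\lambda_1,\lambda_2]=[c_m,s_m]$ the pencil $\lambda_2A_1+\lambda_1A_2$ is a nonzero multiple of the invertible $A_{2m}$, so (P3) is vacuous there, while elsewhere the coprimality of $s_mz-c_m$ and $c_mz+s_m$ collapses the two conditions $(s_mz-c_m)^n(E_m-w\bm 1_c)v=0$ and $(c_mz+s_m)^n(E_m-w\bm 1_c)v=0$ to $(E_m-w\bm 1_c)v=0$ --- with the only caveat that for $n=1$ your derivation of (T1) must use the single relation $A_1C_1A_2=A_2C_1A_1$ (which is itself rotation-covariant) rather than chain relations.
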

We divide the proof of Proposition \ref{lmTomega} into a few steps. First we define the matrices $\sigma^h_{m}=(\sigma^{h}_{m;pq})_{0\leq p,q \leq h}$ for all $h\geq0$ and $m\in\Z$ by means of the equations
\begin{equation}
(s_{m}\mu_{1}+c_{m}\mu_{2})^{p}(c_{m}\mu_{1}-s_{m}\mu_{2})^{h-p}=\sum_{q=0}^{h}\sigma^{h}_{m;pq}\mu_{2}^{q}\mu_{1}^{h-q}
\label{eqsigma}
\end{equation}
for any $(\mu_{1},\mu_{2})\in\Com^{2}$ and $p=0,\dots,h$. Note that $\sigma^h_{m}\sigma^h_{l}=\sigma^h_{m+l}$ and $\sigma^h_{0}=\bm{1}_{h+1}$. In particular, $\sigma^h_{m}$  is invertible for all $h\geq0$ and $m\in\Z$.

\begin{lemma}
\label{propSyst}
Assume   $n>1$. If the matrices $A_{1},A_{2}\in\End(\Com^{c})$ satisfy condition \mbox{\rm(P2)},  then the system
$A_1C_q=A_2C_{q+1}$, $q=1,\dots,n-1$,
with $C_{q}\in\End(\Com^{c})$,  has      maximal rank, namely, $(n-1)c^{2}$. In particular, if $\det A_{2m}\neq0$, the general solution of 
the previous linear system is
\begin{equation}
\begin{pmatrix}
C_{1}\\
\vdots\\
\vdots\\
C_{n}
\end{pmatrix}=(\sigma^{n-1}_{m}\otimes\bm{1}_{c})
\begin{pmatrix}
\bm{1}_{c}\\
B_{m}\\
\vdots\\
B_{m}^{n-1}
\end{pmatrix}
D_m\,,
\label{eq64}
\end{equation}
where we have chosen as free parameter the  matrix
\begin{equation}
D_m=
\sum_{q=1}^{n}\binom{n-1}{q-1}c_{m}^{n-q}s_{m}^{q-1}C_q\,.
\label{eq43}
\end{equation}
\end{lemma}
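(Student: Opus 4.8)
The plan is to reduce the matrix system to its vector analogue, solve the latter by a generalized-eigenspace decomposition of $B_m$, and then recognise the resulting closed form as the asserted $\sigma^{n-1}_m$-formula; the maximal-rank statement drops out as a dimension count.

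First I would note that the equations $A_1C_q=A_2C_{q+1}$ involve only left multiplication, so they decouple over the columns of the $C_q$. Writing $v_q\in\Com^c$ for a common column index, it therefore suffices to analyse the vector recursion $A_1v_q=A_2v_{q+1}$, $q=1,\dots,n-1$; the matrix kernel is then the $c$-fold direct sum of the vector kernel, and the rank multiplies by $c$. To bring in (P2) I would use the observation made just before \eqref{pcnm}: since $\det(\nu_1A_1+\nu_2A_2)$ is a degree-$c$ form, nonzero by (P2), it vanishes at no more than $c$ of the $c+1$ distinct points $[s_m,c_m]$, so there is an $m$ with $\det A_{2m}\neq0$; as the rank of the system does not depend on $m$, it may be computed for this one. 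Multiplying the recursion by $A_{2m}^{-1}$ turns it into $Pv_q=Qv_{q+1}$, where $P=s_m+c_mB_m$ and $Q=c_m-s_mB_m$ are polynomials in $B_m$, hence commute, and satisfy the operator identity $s_mP+c_mQ=A_{2m}^{-1}(s_mA_1+c_mA_2)=\bm{1}_c$.

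The heart of the argument is to solve $Pv_q=Qv_{q+1}$ in spite of the fact that neither $A_1$ nor $A_2$ (equivalently, neither $P$ nor $Q$) need be invertible. I would decompose $\Com^c=\bigoplus_\beta M_\beta$ into the generalized eigenspaces of $B_m$; each $M_\beta$ is preserved by $P$ and $Q$, and on $M_\beta$ these operators have the single eigenvalues $s_m+c_m\beta$ and $c_m-s_m\beta$, which cannot both vanish because $s_m(s_m+c_m\beta)+c_m(c_m-s_m\beta)=s_m^2+c_m^2=1$. Hence on each $M_\beta$ at least one of $P,Q$ is invertible, and the recursion is solved by downward substitution $v_q=P^{-1}Qv_{q+1}$ when $P$ is invertible, or by upward substitution when $Q$ is. In either case one obtains $v_p=P^{p-1}Q^{n-p}\,d$, and the free parameter is pinned to $d=\sum_{q=1}^n\binom{n-1}{q-1}c_m^{n-q}s_m^{q-1}v_q$ by the binomial identity $\sum_{j=0}^{n-1}\binom{n-1}{j}(s_mP)^j(c_mQ)^{n-1-j}=(s_mP+c_mQ)^{n-1}=\bm{1}_c$, where commutativity of $P,Q$ is used. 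Reassembling over the $M_\beta$ (the spectral projections $\pi_\beta$ commute with $P,Q$, hence with every expression involved) yields, on all of $\Com^c$, that \emph{every} solution has the form $v_p=P^{p-1}Q^{n-p}d=C_p(B_m)d$ with $C_p(t)=(s_m+c_mt)^{p-1}(c_m-s_mt)^{n-p}$.

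It remains to package this. Setting $\mu_1=1$, $\mu_2=t$ in the defining relation \eqref{eqsigma} (with $h=n-1$) gives $C_p(t)=\sum_{q=0}^{n-1}\sigma^{n-1}_{m;\,p-1,\,q}\,t^q$, so that $v_p=C_p(B_m)d=\sum_q\sigma^{n-1}_{m;\,p-1,\,q}B_m^q d$; read off in columns, this is exactly \eqref{eq64} with $D_m$ as in \eqref{eq43}. Finally, the assignment $d\mapsto(v_p)_p$ is injective, since $d$ is recovered as the displayed linear combination of the $v_q$; hence the vector solution space has dimension exactly $c$, the matrix solution space has dimension $c^2$, and the system has the maximal rank $nc^2-c^2=(n-1)c^2$. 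The one genuine obstacle is precisely the individual non-invertibility of $A_1$ and $A_2$, which is what forces the passage to the eigenspace decomposition; everything else is the bookkeeping of the rotation encoded in $\sigma^{n-1}_m$ together with the binomial identity that identifies the correct free parameter $D_m$.
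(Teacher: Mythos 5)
Your proof is correct, but it takes a genuinely different and much more self-contained route than the paper. The paper disposes of the lemma in two sentences: the maximality of the rank ``follows from condition (P2) by arguing as in [Gantmacher, pp.~29--30]'' (i.e.\ it is delegated to the classical theory of regular matrix pencils), and equation \eqref{eq64} ``can be verified by direct substitution.'' You instead decouple the matrix system into its column-vector analogue, pass to the rotated frame where the recursion reads $Pv_q=Qv_{q+1}$ with $P=s_m+c_mB_m$, $Q=c_m-s_mB_m$ commuting and satisfying $s_mP+c_mQ=\bm{1}_c$, and resolve the non-invertibility of $A_1$, $A_2$ by splitting $\Com^c$ into generalized eigenspaces of $B_m$, on each of which at least one of $P,Q$ is invertible. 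This yields the closed form $v_p=P^{p-1}Q^{n-p}d$ for \emph{every} solution, identifies the formula \eqref{eq64} via the defining relation \eqref{eqsigma}, and obtains the rank count from the injectivity of $d\mapsto(v_p)_p$, which you make explicit through the binomial identity $(s_mP+c_mQ)^{n-1}=\bm{1}_c$ recovering $D_m$. What the paper's route buys is brevity, at the cost of sending the reader to Gantmacher's treatment of regular pencils and leaving the injectivity of the parametrization (needed to conclude that the substitution-verified family is in fact the \emph{general} solution) implicit; your route buys a complete derivation rather than a verification, and your spectral decomposition is in effect a hands-on substitute for the Weierstrass canonical form of the regular pencil that underlies the cited pages of Gantmacher. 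I find no gap in your argument.
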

\begin{proof}
The maximality of the rank of the system follows from condition (P2) by arguing as in \cite[pp.~29-30]{Gan}. Eq. \eqref{eq64} can be  verified by direct substitution.
\end{proof}
Since $E_{m}=D_{m}A_{2m}$, the morphism $\zeta_{m}$ is injective.

Next we prove that $\im\zeta_{m}\subseteq\T(c)\times\GL(c, \Com)$. This follows from the next  Lemma.

\begin{lemma}
(i) For all $(B_{m},E_{m},e;A_{2m})\in\im\zeta_{m}$,  one has
$
[B_{m},E_{m}]=0
$.

(ii)  Let $(A_{1},A_{2};C_{1},\dots,C_{n};e)\in\End(\Com^{c})^{\oplus(n+2)}\oplus\Hom(\Com^{c},\Com)$ be an $(n+3)$-tuple such that condition \mbox{\rm(P1)} is satisfied and $\det A_{2m}\neq0$. Then
 \begin{itemize}
 \item 
if $[\lambda_1,\lambda_2]=[c_m,s_m]$, condition \mbox{\rm(P3)} is trivially satisfied; 
\item
if $[\lambda_1,\lambda_2]\neq[c_m,s_m]$, condition \mbox{\rm(P3)} holds  if and only if condition \mbox{\rm(T2)} holds for the triple $(B_{m},E_{m},e)$.
\end{itemize}
\label{lmP3}
\end{lemma}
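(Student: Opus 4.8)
The plan is to reduce both parts to the relations (P1), the identity $c_m^2+s_m^2=1$, and the invertibility of $A_{2m}$.

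For part (i), I would start from $B_m=A_{2m}^{-1}A_{1m}$ and $E_m=D_mA_{2m}$, which give
\[
[B_m,E_m]=A_{2m}^{-1}\bigl(A_{1m}D_mA_{2m}-A_{2m}D_mA_{1m}\bigr),
\]
so it suffices to prove $A_{1m}D_mA_{2m}=A_{2m}D_mA_{1m}$. Substituting $A_{1m}=c_mA_1-s_mA_2$, $A_{2m}=s_mA_1+c_mA_2$ and the definition \eqref{eq43} of $D_m$, and collecting terms with $c_m^2+s_m^2=1$, the difference should collapse to
\[
A_{1m}D_mA_{2m}-A_{2m}D_mA_{1m}=\sum_{q=1}^{n}\binom{n-1}{q-1}c_m^{\,n-q}s_m^{\,q-1}\bigl(A_1C_qA_2-A_2C_qA_1\bigr).
\]
It then remains to check that each bracket vanishes: for $1\le q\le n-1$ the left relation $A_1C_q=A_2C_{q+1}$ and the right relation $C_qA_1=C_{q+1}A_2$ both rewrite $A_1C_qA_2$ and $A_2C_qA_1$ as $A_2C_{q+1}A_2$; for $q=n$ the same two relations at index $n-1$ rewrite both as $A_1C_{n-1}A_1$. (For $n=1$ the single relation in (P1) is exactly $A_1C_1A_2=A_2C_1A_1$.) Hence every summand is zero and $[B_m,E_m]=0$.

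For part (ii), I would first record that $A_1=A_{2m}(c_mB_m+s_m\bm{1}_c)$ and $A_2=A_{2m}(c_m\bm{1}_c-s_mB_m)$, so $(\lambda_2A_1+\lambda_1A_2)v=0$ is equivalent, after cancelling the invertible $A_{2m}$, to $\bigl((\lambda_2c_m-\lambda_1s_m)B_m+(\lambda_2s_m+\lambda_1c_m)\bm{1}_c\bigr)v=0$. When $[\lambda_1,\lambda_2]=[c_m,s_m]$ this reads $A_{2m}v=0$, forcing $v=0$; this is the first bullet. When $[\lambda_1,\lambda_2]\neq[c_m,s_m]$ the coefficient $\Delta:=\lambda_2c_m-\lambda_1s_m$ is nonzero and the condition becomes the eigenvalue equation $B_mv=zv$ with $z=-(\lambda_2s_m+\lambda_1c_m)/\Delta$; a direct check shows $[\lambda_1,\lambda_2]\mapsto z$ is a bijection of $\Pu\setminus\{[c_m,s_m]\}$ onto $\Com$.

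The technical heart is to evaluate $C_1A_2v$ and $C_nA_1v$ on an eigenvector $v$ of $B_m$. Writing $w_0=A_{2m}v$, one has $A_1v=(c_mz+s_m)w_0$ and $A_2v=(c_m-s_mz)w_0$, while the right relations give $(c_mz+s_m)C_qw_0=(c_m-s_mz)C_{q+1}w_0$. I would clear denominators once and for all: an induction yields $(c_m-s_mz)^{q-1}C_qw_0=(c_mz+s_m)^{q-1}C_1w_0$ (and dually $(c_mz+s_m)^{n-q}C_qw_0=(c_m-s_mz)^{n-q}C_nw_0$), and substituting these into $E_mv=D_mw_0=\sum_q\binom{n-1}{q-1}c_m^{\,n-q}s_m^{\,q-1}C_qw_0$ and summing the binomial series with $c_m^2+s_m^2=1$ should produce the two clean identities
\[
C_1A_2v=(c_m-s_mz)^{\,n}E_mv,\qquad C_nA_1v=(c_mz+s_m)^{\,n}E_mv,
\]
valid for every eigenvector $v$ with no case distinction. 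I expect this division-free derivation to be the main obstacle, precisely because it must also cover the degenerate value with $c_m-s_mz=0$ (i.e.\ $\lambda_2=0$, where $A_2v=0$), which a naive computation dividing by $c_m-s_mz$ would miss.

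Finally I would substitute $c_m-s_mz=\lambda_2/\Delta$ and $c_mz+s_m=-\lambda_1/\Delta$ (immediate from the formula for $z$) to rewrite the two (P3) conditions $C_1A_2v=-\mu_1v$ and $C_nA_1v=(-1)^n\mu_2v$ as $\lambda_2^{\,n}E_mv=-\mu_1\Delta^nv$ and $\lambda_1^{\,n}E_mv=\mu_2\Delta^nv$. Since $[\lambda_1,\lambda_2]\in\Pu$ has a nonzero coordinate, at least one of these forces $E_mv$ to be a scalar multiple of $v$; conversely, given $E_mv=wv$, setting $\mu_1=-\lambda_2^{\,n}w/\Delta^n$ and $\mu_2=\lambda_1^{\,n}w/\Delta^n$ makes both conditions hold and automatically satisfies $\lambda_1^{\,n}\mu_1+\lambda_2^{\,n}\mu_2=0$. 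Thus, for $[\lambda_1,\lambda_2]\neq[c_m,s_m]$, the existence of a nonzero $v\in\ker e$ meeting all the (P3) requirements is equivalent to the existence of a nonzero $v\in\ker e$ that is a common eigenvector of $B_m$ and $E_m$, which is exactly the failure of (T2) for $(B_m,E_m,e)$; this gives the second bullet.
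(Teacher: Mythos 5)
Your argument is correct and follows essentially the same route as the paper's proof: part (i) rests on the identity $A_{1m}C_qA_{2m}-A_{2m}C_qA_{1m}=A_1C_qA_2-A_2C_qA_1$ combined with (P1), and part (ii) reduces the (P3) system, for $[\lambda_1,\lambda_2]\neq[c_m,s_m]$, to the common-eigenvector equations for $B_m$ and $E_m$ with prefactors $(c_m-s_mz)^n$ and $(c_mz+s_m)^n$, concluding because these two polynomials in $z$ have no common zero. The only difference is presentational: you make the equivalence of systems (the paper's eq.~\eqref{eqEquiv}) explicit via a division-free induction on the relations $C_qA_1=C_{q+1}A_2$, whereas the paper invokes eq.~\eqref{eq64} and leaves that verification to the reader.
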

\begin{proof}
(i) 
For all $n\geq1$, condition (P1) implies  
\begin{equation}
0=A_1C_qA_2-A_2C_qA_1=A_{1m}C_{q}A_{2m}-A_{2m}C_{q}A_{1m}=-A_{2m}[C_qA_{2m} , B_m]
\end{equation}
for $q=1,\dots,n$ and   $m=0,\dots,c$. The thesis follows from eq.~\eqref{eq410}.

(ii) 
If $[\lambda_1,\lambda_2]=[c_m,s_m]$ one has $\lambda_{2}A_{1}+\lambda_{1}A_{2}=\lambda A_{2m}$ for some $\lambda\in\Com^{*}$. This proves the first statement.

Assume that $[\lambda_1,\lambda_2]\neq[c_m,s_m]$. One has
\begin{equation}
\begin{gathered}
\lambda_{2}A_{1}+\lambda_{1}A_{2}=\lambda A_{2m}(B_{m}-z\bm{1}_{c})\qquad\text{where}\qquad z=\frac{c_{m}\lambda_{1}+s_{m}\lambda_{2}}{s_{m}\lambda_{1}-c_{m}\lambda_{2}}\\[5pt]
\lambda_{1}^{n}\mu_{1}+\lambda_{2}^{n}\mu_{2}=0\qquad\text{if and only if}\qquad
\left\{
\begin{aligned}
\mu_{1}&=(-1)^{n-1}(s_{m}z-c_{m})^{n}w\\
\mu_{2}&=(-1)^{n}(c_{m}z+s_{m})^{n}w
\end{aligned}
\right.
\end{gathered}
\label{eqAmu}
\end{equation}
for some $\lambda\in\Com^{*}$ and $w\in\Com$. By using eqs. \eqref{eq64} and \eqref{eqAmu} one can prove  the equivalence between the   systems
\begin{equation}
\left\{
\begin{aligned}
(\lambda_{2}A_{1}+\lambda_{1}A_{2})v&=0\\
C_{1}A_{2}v&=-\mu_{1}v\\
C_{n}A_{1}v&=(-1)^{n}\mu_{2}v
\end{aligned}
\right.\qquad\mbox{and}\qquad
\left\{
\begin{aligned}
(B_{m}-z\bm{1}_{c})v&=0\\
(s_{m}z-c_{m})^{n}(E_{m}-w\bm{1}_{c})v&=0\\
(c_{m}z+s_{m})^{n}(E_{m}-w\bm{1}_{c})v&=0
\end{aligned}
\right. .
\label{eqEquiv}
\end{equation}
The thesis follows   as the polynomials $s_{m}z-c_{m}$ and $c_{m}z+s_{m}$ are coprime in $\Com[z]$.
\end{proof}
The following result will be useful in the next section.
\begin{cor}\label{useful}
Let $(A_{1},A_{2};C_{1},\dots,C_{n})\in\End(\Com^{c})^{\oplus(n+2)}$ be an $(n+2)$-tuple such that conditions \mbox{\rm(P1)} and \mbox{\rm(P2)} are satisfied. There exists a nonzero vector $v\in\Com^{c}$ and parameters $\left([\lambda_{1},\lambda_{2}],(\mu_{1},\mu_{2})\right)\in\Pu\times\Com^{2}$ such that
\begin{equation}
\lambda_{1}^{n}\mu_{1}+\lambda_{2}^{n}\mu_{2}=0
\label{eqlambdamu}
\end{equation}
and
\begin{equation}
\left\{
\begin{aligned}
(\lambda_{2}A_{1}+\lambda_{1}A_{2})v&=0\\
C_{1}A_{2}v&=-\mu_{1}v\\
C_{n}A_{1}v&=(-1)^{n}\mu_{2}v
\end{aligned}
\right.
\label{eqPP3}
\end{equation}
\end{cor}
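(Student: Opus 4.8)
The plan is to reduce the existence of the pair $\bigl(v,\,([\lambda_1,\lambda_2],(\mu_1,\mu_2))\bigr)$ to the elementary fact that two commuting endomorphisms of a nonzero finite-dimensional complex vector space have a common eigenvector, and then to transport that eigenvector through the dictionary already established in Lemma \ref{lmP3}. The two hypotheses enter in cleanly separated ways: (P2) is used only to select a good index $m$, while (P1) is what forces the relevant matrices to commute.

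First I would choose the index $m$. By (P2) the polynomial $\det(\nu_1A_1+\nu_2A_2)$ is not identically zero, so it has at most $c$ roots in $\Pu$; since the $c+1$ points $[s_m,c_m]$, $m=0,\dots,c$, are pairwise distinct (the angles $\pi m/(c+1)$ being distinct), at least one of them is not a root, giving an $m$ with $\det A_{2m}\neq0$. Fix such an $m$ and form $B_m=A_{2m}^{-1}A_{1m}$ and $E_m=D_m A_{2m}$, with $D_m$ the matrix \eqref{eq43}. Next I would note that $B_m$ and $E_m$ commute: this is exactly the computation in the proof of Lemma \ref{lmP3}(i), where (P1) yields $[C_qA_{2m},B_m]=0$ for each $q$, whence $[B_m,E_m]=0$ because $E_m=\sum_q\binom{n-1}{q-1}c_m^{n-q}s_m^{q-1}C_qA_{2m}$ is a linear combination of these. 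This step uses only the $A$'s and $C$'s, so the absence of a framing $e$ from the hypotheses is immaterial (one may simply append $e=0$ to apply Lemma \ref{lmP3}(ii)). Since $\Com^c$ is nonzero, $B_m$ and $E_m$ then admit a common eigenvector $v\neq0$, say $B_mv=zv$ and $E_mv=wv$.

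Finally I would invoke the equivalence of systems \eqref{eqEquiv}. Setting $[\lambda_1,\lambda_2]=[\,c_mz+s_m,\;s_mz-c_m\,]$ one recovers $z=(c_m\lambda_1+s_m\lambda_2)/(s_m\lambda_1-c_m\lambda_2)$, and since here $s_m\lambda_1-c_m\lambda_2=1$ this point is distinct from $[c_m,s_m]$, so Lemma \ref{lmP3}(ii) applies. Defining $\mu_1=(-1)^{n-1}(s_mz-c_m)^nw$ and $\mu_2=(-1)^n(c_mz+s_m)^nw$ as in \eqref{eqAmu}, the constraint \eqref{eqlambdamu} holds automatically, because the two summands of $\lambda_1^n\mu_1+\lambda_2^n\mu_2$ differ only by the sign $(-1)^{n-1}+(-1)^n=0$ and hence cancel. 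The common-eigenvector equations $(B_m-z\bm{1}_c)v=0$ and $(E_m-w\bm{1}_c)v=0$ are then carried by \eqref{eqEquiv} into precisely the system \eqref{eqPP3}, which completes the argument.

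I do not expect a genuine obstacle here; the one point that deserves care—rather than difficulty—is the selection of $m$ with $\det A_{2m}\neq0$, since only then is $B_m$ defined and the reduction of Lemma \ref{lmP3}(ii) available. This is exactly where (P2) and the distinctness of the $c+1$ angles are needed. Everything else is bookkeeping: commutativity is supplied for free by (P1), the existence of a shared eigenvector is standard over $\Com$, and the passage between $(z,w)$ and $([\lambda_1,\lambda_2],(\mu_1,\mu_2))$ is the invertible change of coordinates recorded in \eqref{eqAmu}. Since all cited ingredients are stated for every $n\ge1$, the argument is uniform in $n$.
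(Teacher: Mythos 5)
Your proposal is correct and follows essentially the same route as the paper's proof: select $m$ with $\det A_{2m}\neq0$ via (P2), use the Lemma \ref{lmP3}(i) computation to see that $B_m$ and $E_m$ commute, take a common eigenvector, and translate back through \eqref{eqEquiv} with the change of parameters \eqref{eqAmu}. Your explicit verification that $[\lambda_1,\lambda_2]\neq[c_m,s_m]$ and that \eqref{eqlambdamu} holds by the sign cancellation is a welcome, slightly more detailed rendering of what the paper leaves as "can now be checked."
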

\begin{proof}
By condition (P2) there exists $m\in\{0,\dots,c\}$ such that $\det A_{2m}\neq 0$, so that one can define $B_{m}$. One can also define $E_{m}$
according to eq.~\ref{eq410}.
 By arguing as in the proof of Lemma \ref{lmP3}, one deduces that $B_{m}$ and $E_{m}$ commute, hence they have a common eigenvector $v\in\Com^{c}$ with eigenvalues, say, $z$ and $w$, respectively. By eq. 
\eqref{eqEquiv} this is equivalent to eq. \eqref{eqPP3}, provided   we define
\begin{align}
\lambda_1&=c_mz+s_m\,, &\qquad \lambda_2&=s_mz-c_m\,,\\
\mu_{1}&=(-1)^{n-1}(s_{m}z-c_{m})^{n}w\,, & \qquad
\mu_{2}&=(-1)^{n}(c_{m}z+s_{m})^{n}w\,.
\end{align}
Eq. \eqref{eqlambdamu} can now be checked.
\end{proof}

Finally, we prove that $\T(c)\times\GL(c, \Com)\subseteq\im\zeta_{m}$. Let $(b_{1},b_{2},e;A)\in \T(c)\times\GL(c, \Com)$; if  
\begin{equation*}
A_{1}=A(c_{m}b_{1}+s_{m}\bm{1}_{c})\,,\quad
A_{2}=A(-s_{m}b_{1}+c_{m}\bm{1}_{c})\,,
\end{equation*}
\begin{equation}
\begin{pmatrix}
C_{1}\\
\vdots\\
\vdots\\
C_{n}
\end{pmatrix} =(\sigma^{n-1}_{m}\otimes\bm{1}_{c})
\begin{pmatrix}
\bm{1}_{c}\\
b_{1}\\
\vdots\\
b_{1}^{n-1}
\end{pmatrix}
b_{2}A^{-1}\,,
\label{eqACe}
\end{equation}
then $(A_1,A_2;C_1,\dots,C_n;e)\in P^n(c)_m$ and $\zeta_{m}(A_1,A_2;C_1,\dots,C_n;e)=(b_{1},b_{2},e;A)$. One can verify by substitution that condition (P1) holds. Note now that by substituting \eqref{eqACe} into eq.~\eqref{eq410} one gets
\begin{equation*}
A_{1m}=Ab_{1},\qquad A_{2m}=A, \qquad E_{m}=b_{2}\,.
\end{equation*}
This shows that $A_{2m}$ is invertible, and in particular, condition (P2) holds. Since $B_{m}=b_{1}$, by Lemma \ref{lmP3} condition (P3) holds as a consequence of (T2). This concludes the proof of Proposition \ref{lmTomega}.

We now compute the transition functions on the intersections $P^{n}(c)_{ml}=P^{n}(c)_{m}\cap P^{n}(c)_{l}$, for $m,l=0,\dots,c$. First observe that 
\begin{equation*}
\zeta_{m}\left(P^{n}(c)_{ml}\right)=\T(c)_{m,l}\times\GL(c, \Com)
\end{equation*}
as a consequence of the identity
\begin{equation}
\begin{split}
A_{2l}&=
\begin{pmatrix}
s_{l}\bm{1}_{c} & c_{l}\bm{1}_{c}
\end{pmatrix}
\begin{pmatrix}
c_{m}\bm{1}_{c} & s_{m}\bm{1}_{c}\\
-s_{m}\bm{1}_{c} & c_{m}\bm{1}_{c}
\end{pmatrix}
\begin{pmatrix}
A_{1m}\\
A_{2m}
\end{pmatrix}
=A_{2m}(c_{m-l}\bm{1}_{c}-s_{m-l}B_{m})
\end{split}
\label{eqA2l}
\end{equation}
(the notation $\T(c)_{m,l}$ in introduced in eq.\ \eqref{tcml} of Appendix \ref{appa}).
\begin{prop}
\label{proGlue}
One has a commutative triangle
\begin{equation}
\xymatrix{
& P^{n}(c)_{ml} \ar[ld]_{\zeta_{m,l}} \ar[rd]^{\zeta_{l,m}}\\
\T(c)_{m,l}\times\GL(c, \Com) \ar[rr]^-{\omega_{lm}} & & \T(c)_{l,m}\times\GL(c, \Com)\,,
}
\label{eqtri}
\end{equation}
where $\zeta_{m,l}$ and $\zeta_{l,m}$ are the restrictions of $\zeta_{m}$ and $\zeta_{l}$, respectively, and
\begin{equation*}
\omega_{lm}(B_{m},E_{m},e;A_{2m})=\left({\varphi}_{lm}(B_{m},E_{m},e),A_{2m}(c_{m-l}\bm{1}_{c}-s_{m-l}B_{m})\right)\,,
\end{equation*}
the functions ${\varphi}_{lm}$ being defined analogously to the transition functions in Proposition \ref{pro1}. The transition functions $\omega_{lm}$ are $\GL(c,\Com)\times \GL(c, \Com)$-equivariant.
\end{prop}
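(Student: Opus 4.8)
The plan is to establish the proposition by computing the composite $\omega_{lm}=\zeta_{l,m}\circ\zeta_{m,l}^{-1}$ explicitly: for a point of $P^{n}(c)_{ml}$ I would express its $\zeta_{l}$-coordinates $(B_{l},E_{l},e;A_{2l})$ in terms of its $\zeta_{m}$-coordinates $(B_{m},E_{m},e;A_{2m})$, check that the resulting map is the one stated, and then verify equivariance. I would treat the four slots separately. The $e$-slot is immediate, since $e$ is literally the same component in both charts. The $\GL(c,\Com)$-slot is already settled by \eqref{eqA2l}, which gives $A_{2l}=A_{2m}(c_{m-l}\bm{1}_{c}-s_{m-l}B_{m})$, precisely the second component of $\omega_{lm}$; moreover, since $\det A_{2m}\neq0$ and $\det A_{2l}\neq0$ on $P^{n}(c)_{ml}$, the factor $G:=c_{m-l}\bm{1}_{c}-s_{m-l}B_{m}$ is invertible there.

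For the $B_{m}$-slot, I would use \eqref{eq410} to express $(A_{1l},A_{2l})$ through $(A_{1m},A_{2m})$, namely $A_{1l}=c_{m-l}A_{1m}+s_{m-l}A_{2m}$ and $A_{2l}=-s_{m-l}A_{1m}+c_{m-l}A_{2m}$. Factoring $A_{2m}$ on the left and inserting $A_{1m}=A_{2m}B_{m}$ turns $B_{l}=A_{2l}^{-1}A_{1l}$ into $G^{-1}(s_{m-l}\bm{1}_{c}+c_{m-l}B_{m})$, which is exactly the rule defining $\varphi_{lm}$ on the first matrix (the $b_{1}$-transition of Proposition \ref{pro1}, with $B_{m}$ in place of $b_{1m}$). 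This step is routine matrix algebra.

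The $E_{m}$-slot is the crux. I would start from the two expressions of the common column $(C_{1},\dots,C_{n})$ of \eqref{eq64} valid in the charts $m$ and $l$, equate them, and cancel the invertible factor $\sigma^{n-1}_{l}\otimes\bm{1}_{c}$ by means of $\sigma^{n-1}_{-l}\sigma^{n-1}_{m}=\sigma^{n-1}_{m-l}$, reducing to $\mathbf{B}_{l}D_{l}=(\sigma^{n-1}_{m-l}\otimes\bm{1}_{c})\mathbf{B}_{m}D_{m}$, where $\mathbf{B}_{m}$ and $\mathbf{B}_{l}$ denote the columns with blocks $\bm{1}_{c},B_{m},\dots,B_{m}^{\,n-1}$ and $\bm{1}_{c},B_{l},\dots,B_{l}^{\,n-1}$, respectively. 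The key identity is that, evaluating the defining relation \eqref{eqsigma} of $\sigma^{n-1}_{m-l}$ at $(\mu_{1},\mu_{2})=(\bm{1}_{c},B_{m})$ and using the formula for $B_{l}$ obtained above together with the fact that $G$ is a polynomial in $B_{m}$ (hence commutes with $B_{m}$ and $B_{l}$), the $p$-th block of $(\sigma^{n-1}_{m-l}\otimes\bm{1}_{c})\mathbf{B}_{m}$ equals $B_{l}^{\,p}G^{n-1}$, so that $(\sigma^{n-1}_{m-l}\otimes\bm{1}_{c})\mathbf{B}_{m}=\mathbf{B}_{l}\,G^{n-1}$. Reading off the top block yields $D_{l}=G^{n-1}D_{m}$, whence, using $E_{m}=D_{m}A_{2m}$ and $A_{2l}=A_{2m}G$, one gets $E_{l}=D_{l}A_{2l}=G^{n-1}E_{m}\,G$. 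Finally I would invoke $[B_{m},E_{m}]=0$ from Lemma \ref{lmP3}(i), which (again because $G$ is polynomial in $B_{m}$) collapses $G^{n-1}E_{m}G$ to $G^{n}E_{m}=(c_{m-l}\bm{1}_{c}-s_{m-l}B_{m})^{n}E_{m}$, the $b_{2}$-transition of Proposition \ref{pro1}. This confirms that the first component of $\zeta_{l,m}\circ\zeta_{m,l}^{-1}$ is $\varphi_{lm}$, so the triangle \eqref{eqtri} commutes. I expect the main difficulty to lie exactly here, in the bookkeeping with the matrices $\sigma^{h}_{m-l}$ and in the essential use of the commutativity $[B_{m},E_{m}]=0$.

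For equivariance, I would substitute the transformed coordinates $(\phi_{1}B_{m}\phi_{1}^{-1},\phi_{1}E_{m}\phi_{1}^{-1},e\phi_{1}^{-1};\phi_{2}A_{2m}\phi_{1}^{-1})$ from \eqref{eqrhol} into $\omega_{lm}$. Since $G$ is a polynomial in $B_{m}$, conjugation by $\phi_{1}$ passes through it, so $\varphi_{lm}$ sends the transformed triple to the $\phi_{1}$-conjugate of $(B_{l},E_{l},e)$, while $A_{2l}=A_{2m}G$ is sent to $\phi_{2}A_{2l}\phi_{1}^{-1}$; comparing with the action \eqref{eqrhol} on the target gives $\omega_{lm}((\phi_{1},\phi_{2})\cdot\xi)=(\phi_{1},\phi_{2})\cdot\omega_{lm}(\xi)$, i.e.\ the asserted $\GL(c,\Com)\times\GL(c,\Com)$-equivariance.
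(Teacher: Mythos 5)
Your proposal is correct and follows essentially the same route as the paper's proof: both derive $A_{2l}$ and $A_{1l}$ from \eqref{eqA2l} to get the $B_l$-rule, and both obtain the $E_l$-rule from \eqref{eq64} via the identity $\sigma^{n-1}_{m-l}=\sigma^{n-1}_{-l}\sigma^{n-1}_{m}$ together with the commutativity $[B_m,E_m]=0$ of Lemma \ref{lmP3}(i), the equivariance being a routine check in both cases. Your version merely spells out the $\sigma$-bookkeeping (the identity $(\sigma^{n-1}_{m-l}\otimes\bm{1}_c)\mathbf{B}_m=\mathbf{B}_l G^{n-1}$) more explicitly than the paper does.
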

\begin{proof}
We want to express $(B_l,E_l,e;A_{2l})$ in terms of $(B_m,E_m,e;A_{2m})$. We already have eq.~\eqref{eqA2l}; analogously, one can prove $A_{1l}=A_{2m}(s_{m-l}\bm{1}_{c}+c_{m-l}B_{m})$. It follows that $B_{l}=(c_{m-l}\bm{1}_{c}-s_{m-l}B_{m})^{-1}(s_{m-l}\bm{1}_{c}+c_{m-l}B_{m})$.
As for $E_{l}$, one has
\begin{equation*}
E_{l}=\left[\sum_{p=1}^{n}\sigma^{n-1}_{-l;0,p-1} C_{p}\right]A_{2l}
=\left[\sum_{p=0}^{n-1} \sigma^{n-1}_{m-l;0p}B_{m}^{p}\right]E_{m}A_{2m}^{-1}A_{2l}
=(c_{l-m}\bm{1}_{c}-s_{l-m}B_{m})^{n}E_{m}\,,
\end{equation*}
where we have used eq. \eqref{eq64}, the relation $\sigma^{n-1}_{m-l}=\sigma^{n-1}_{-l}\sigma^{n-1}_{m}$, and Lemma \ref{lmP3}.

The equivariance of $\omega_{lm}$ is straightforward, and this completes the proof.
\end{proof}

From  Proposition \ref{lmTomega} we have \begin{equation}
P^{n}(c)_{m}/\GL(c, \Com)\times \GL(c, \Com)\simeq \T(c)/\GL(c, \Com) \simeq  U^{n,c}_m\,;
\label{eqDelta}
\end{equation}
moreover, there is an equivariant isomorphism $P^{n}(c)_{m}\simeq \T(c) \times \GL(c, \Com)$. As
  $\T(c)$ is a  principal $\GL(c, \Com)$-bundle over $\T(c)/\GL(c, \Com)$, the space $P^{n}(c)_{m}$ turns out to be a principal $\GL(c, \Com)\times \GL(c, \Com)$-bundle over $U^{n,c}_{m}$. 
Propositions \ref{pro1} and \ref{proGlue} now imply  that $P^n(c)$ is a principal $\GL(c, \Com)\times \GL(c, \Com)$-bundle, and this completes the proof of Theorem \ref{thm0}.

\bigskip
\section{Hilbert schemes as quiver varieties}\label{quiver}
In this   section we   prove that the Hilbert schemes of points of the total space $X_n$  of the line bundles $\Ol_{\Pu}(-n)$ are isomorphic to suitable moduli spaces of quiver representations. 

\subsection{Quiver varieties}
Given two arrows $x,y$, we adopt the notation $xy$ for ``\emph{$y$ followed by $x$}.''
We introduce the quivers $\mathcal{Q}_{1}$ and $\mathcal{Q}_{2}$ 
\begin{equation}
\xymatrix@R-2.3em{
\mbox{\scriptsize$0$}&&\mbox{\scriptsize$1$}\\
\bullet\ar@/^/[rr]^{a_{1}}\ar@/^4ex/[rr]^{a_{2}}&&\bullet\ar@/^10pt/[ll]^{c_{1}}
}
\qquad\qquad
\xymatrix@R-2.3em{
\mbox{\scriptsize$0$}&&\mbox{\scriptsize$1$}\\
\bullet\ar@/^/[rr]^{a_{1}}\ar@/^4ex/[rr]^{a_{2}}&&\bullet\ar@/^/[ll]^{c_{1}}
\ar@/^4ex/[ll]^{c_2}
}
\end{equation}
and, for $n\geq3$, the quivers  $\mathcal{Q}_{n}$ 
\begin{equation}
\xymatrix@R-2.3em{
\mbox{\scriptsize$0$}&&\mbox{\scriptsize$1$}\\
\bullet\ar@/^/[rr]^{a_{1}}\ar@/^4ex/[rr]^{a_{2}}&&\bullet\ar@/^/[ll]^{c_{1}}
\ar@/^4ex/[ll]^{c_2}\ar@/^7ex/[ll]^{\ell_1}\ar@{..}@/^10ex/[ll]\ar@{..}@/^11ex/[ll]
\ar@/^12ex/[ll]^{\ell_{n-2}}
} 
\label{eqQui}
\end{equation}

 {
Consider the relations}
\begin{align}
&a_1c_1a_2=a_2c_1a_1\,, \label{In1}\\
 &a_1c_1=a_2c_2, \quad c_1a_1=c_2a_2\,, \label{In2}\\
&a_1c_2=a_2\ell_1,\quad c_2a_1=\ell_1a_2\,, \label{In3}\\
&a_1\ell_t=a_2\ell_{t+1}, \quad \ell_ta_1=\ell_{t+1}a_2\qquad\text{for}\quad t=1,\dots,n-3\,.\label{In4}
\end{align}
 {For $n\geq1$, we introduce the ideal $I_n$ of the path algebra $\Com\mathcal{Q}_n$ generated by}
\begin{enumerate}
 \item the  relation \eqref{In1} for $n=1$;
 \item the relation \eqref{In2} for $n=2$;
 \item the relations \eqref{In2} and \eqref{In3} for $n=3$;
 \item the relations \eqref{In2}, \eqref{In3} and \eqref{In4} for $n\geq 4$.
\end{enumerate}

We briefly recall the notion of \emph{Wemyss's reconstruction algebra} (see \cite[Section 2]{W2011}). First, for all integers $\alpha_i\geq2$, one    considers the labelled Dynkin diagram of type $A_l$:
\[
 \xymatrix@R-2.3em{
\mbox{\scriptsize$(l,\alpha_l)$}&\mbox{\scriptsize$(l-1,\alpha_{l-1})$}&&\mbox{\scriptsize$(2,\alpha_2)$}&\mbox{\scriptsize$(1,\alpha_1)$}\\
\bullet\ar@{-}[r]&\bullet\ar@{-}[r]&\cdots\ar@{-}[r]&\bullet\ar@{-}[r]&\bullet
}
\xymatrix@R-2.5em{
\\ \\
;
} 
\]
second, one associates with this diagram the double quiver of the extended Dynkin quiver, by adding  a $0$-th vertex. One gets
\[
 \xymatrix@R-2.3em{
\mbox{\scriptsize$(l,\alpha_l)$}&\mbox{\scriptsize$(l-1,\alpha_{l-1})$}&&\mbox{\scriptsize$(2,\alpha_2)$}&\mbox{\scriptsize$(1,\alpha_1)$}\\
\bullet\ar[r]\ar@/_2ex/[ddddrr]&\bullet\ar@/_2ex/[l]\ar[r]&\cdots\ar@/_2ex/[l]\ar[r]&\bullet\ar@/_2ex/[l]\ar[r]&\bullet\ar@/_2ex/[l]\ar@/^2ex/[ddddll]
\\ \\ \\ \\
&&\bullet\ar[uuuull]\ar[uuuurr]&&\\
&&\mbox{\scriptsize$0$}&&
}
\xymatrix@R-2.5em{
\\ \\
.
} 
\]

Finally, for any $\alpha_i>2$, one draws $\alpha_i-2$ additional arrows from the $i$-th vertex to the $0$-th vertex. We call $\mathcal{Q}$ the quiver we obtain at the end of this procedure. The \emph{recostruction algebra of type $A$} associated with the labels $[\alpha_1,\dots,\alpha_l]$, with each $\alpha_i\geq2$, was introduced in \cite[Definition 2.3]{W2011} as a certain quotient of the path algebra $\Com\mathcal{Q}$.

As in \cite[Definition 2.6]{W2011}, for $m_1,m_2\in\mathbb{N}$ with $\gcd(m_1,m_2)=1$ and $m_1>m_2$,  we denote by $\frac{1}{m_1}(1,m_2)$ the cyclic subgroup of $\GL(2,\Com)$ generated by
\[
 \begin{pmatrix}
   \varepsilon&0\\
   0&\varepsilon^{m_2}
 \end{pmatrix}\,,
\]
where $\varepsilon$ is a primitive $m_1$-th root of unity.
Furthermore, we recall that we can associate with a pair $m_1,m_2$ as above the so-called Jung-Hirzebruch continued fraction expansion of $\frac{m_1}{m_2}$, namely
\[
 \frac{m_1}{m_2}=\alpha_1-\frac{1}{\alpha_2-\displaystyle\frac{1}{\alpha_3-\frac{1}{(\dots)}}}\,,
\]
where each $\alpha_i\in\mathbb{N}_{\geq2}$.
\begin{defin}[\mbox{\cite[Definition 2.7]{W2011}}]
 The \emph{reconstruction algebra $A_{m_1,m_2}$} associated with the group $\frac{1}{m_1}(1,m_2)$ is the recostruction algebra of type $A$ corresponding to the integers of the Jung-Hirzebruch continued fraction expansion of $\frac{m_1}{m_2}$.
\end{defin}

By comparing the definitions, for $n\geq2$ the reconstruction algebras $A_{n,1}$ turn out to be precisely the quotients of the path algebras of our quivers $\mathcal{Q}_n$ by the ideals $I_n$. However, there is no such correspondence for $n=1$. In particular, we stress that, for $n\geq 2$, the quiver $\mathcal{Q}_n$ coincides with the special McKay quiver associated with toric singularity of type $\frac{1}{n}(1,1)$.

From now on, we definitively deviate from Wemyss's construction. We construct new quivers out of $\mathcal{Q}_n$, by adding a ``framing vertex:''
\begin{equation} \label{eqQnfr}
 \xymatrix@R-2.3em{
&\mbox{\scriptsize$0$}&&\mbox{\scriptsize$1$}\\
&\bullet\ar@/_1ex/[ldd]_{j}\ar@/^/[rr]^{a_{1}}\ar@/^4ex/[rr]^{a_{2}}&&\bullet\ar@/^10pt/[ll]^{c_{1}} \\ \\
\mbox{\scriptsize$\infty$}\ \ \bullet\ \ &&&\\
&&\\ &&\mbox{\framebox[1cm]{\begin{minipage}{1cm}\centering $n=1$\end{minipage}}}
}
\qquad\qquad
  \xymatrix@R-2.3em{
&\mbox{\scriptsize$0$}&&\mbox{\scriptsize$1$}\\
&\bullet\ar@/_3ex/[ldddd]_{j}\ar@/^/[rr]^{a_{1}}\ar@/^4ex/[rr]^{a_{2}}&&\bullet\ar@/^/[ll]^{c_{1}}
\ar@/^4ex/[ll]^{c_2} \\ \\ \\&&&&& \\ 
\mbox{\scriptsize$\infty$}\ \ \bullet\ \ar@/_12pt/[ruuuu]_{i_1}&&&&& \\
&&& \mbox{\framebox[1cm]{\begin{minipage}{1cm}\centering $n=2$\end{minipage}}}
}
\end{equation}
\begin{equation}
  \xymatrix@R-2.3em{
&&\mbox{\scriptsize$0$}&&\mbox{\scriptsize$1$}\\
&&\bullet\ar@/_3ex/[llddddddd]_{j}\ar@/^/[rr]^{a_{1}}\ar@/^4ex/[rr]^{a_{2}}&&\bullet\ar@/^/[ll]^{c_{1}}
\ar@/^4ex/[ll]^{c_2}\ar@/^7ex/[ll]^{\ell_1}\ar@{..}@/^10ex/[ll]\ar@{..}@/^11ex/[ll]
\ar@/^12ex/[ll]^{\ell_{n-2}}& \\ \\ \\ \\ \\&&&&&\mbox{\framebox[1cm]{\begin{minipage}{1cm}\centering $n\geq3$\end{minipage}}} \\ \\
\bullet\ar@/_/[rruuuuuuu]^{i_1}\ar@/_3ex/[rruuuuuuu]_{i_2}\ar@{..}@/_6ex/[rruuuuuuu]\ar@{..}@/_8ex/[rruuuuuuu]\ar@/_10ex/[rruuuuuuu]_{i_{n-1}}&&&&&\\
\mbox{\scriptsize$\infty$}&&&&
}
\end{equation}
We call these new quivers $\mathcal{Q}_n^{\operatorname{fr}}$, for $n\geq1$. {In what follows, any time an index runs from $1$ to $n-1$, for $n=1$ it will be understood that there is no associated object.}  {Consider the relations}
\begin{align}
&a_1c_1a_2=a_2c_1a_1\,, \label{In1f}\\
 &a_1c_1=a_2c_2, \quad c_1a_1+i_1j=c_2a_2\,, \label{In2f}\\
&a_1c_2=a_2\ell_1,\quad c_2a_1+i_2j=\ell_1a_2\,, \label{In3f}\\
&a_1\ell_t=a_2\ell_{t+1}, \quad \ell_ta_1+i_{t+2}j=\ell_{t+1}a_2\qquad\text{for}\quad t=1,\dots,n-3\,,\label{In4f}
\end{align}
 {and let $I^{\operatorname{fr}}_n$ be the ideal of the path algebra $\Com\mathcal{Q}_n^{\operatorname{fr}}$ generated by}
\begin{enumerate}
 \item the relation \eqref{In1f} for $n=1$;
 \item the relation \eqref{In2f} for $n=2$;
 \item the relations \eqref{In2f} and \eqref{In3f} for $n=3$;
 \item the relations \eqref{In2f}, \eqref{In3f} and \eqref{In4f} for $n\geq 4$.
\end{enumerate}

{Note that, while $I_1^{\operatorname{fr}}$ and $I_1$ are defined by the same relation, when $n\geq 2$ we not only added a framing vertex to $\mathcal{Q}_n$, but also ``framed'' the ideal $I_n$.}

Let $B_n^{\operatorname{fr}}=\Com\mathcal{Q}_n^{\operatorname{fr}}/I_n^{\operatorname{fr}}$; for every $\vec{v}:=(v_0,v_1)\in\mathbb{N}^2$, $w\in\mathbb{N}$, a $(\vec{v},w)$-dimensional representation of $B_n^{\operatorname{fr}}$ is given by the choice of three $\Com$-vector spaces, $V_0,V_1$ and $W$, with $\dim V_i = v_i$, $\dim W=w$, together with an element $(A_1,A_2;C_1,\dots,C_n;e;f_1,\dots,f_{n-1})$ of
\[
\Hom_\Com(V_0,V_1)^{\oplus2}\oplus\Hom_\Com(V_1,V_0)^{\oplus n}\oplus\Hom_\Com(V_0,W)\oplus\Hom_\Com(W,V_0)^{\oplus n-1}
\]
 compatible with the relations in \eqref{In1f}--\eqref{In4f}. We will refer to the totality of the equations induced by \eqref{In1f}--\eqref{In4f} at the representation level as ``condition (Q1).'' The vertex $\infty$ is interpreted as a framing  vertex  because we regard $\operatorname{Rep}(B_n^{\operatorname{fr}},\vec{v},w)$ as a $\GL(v_0,\Com)\times\GL(v_1,\Com)$-variety, avoiding the change of basis action of $\GL(w,\Com)$.
 
 \begin{remark}\label{quattrodue}
  Let $\mathcal{Q}$ be a quiver and denote by $\mathcal{Q}^{\operatorname{double}}$ its double, which is obtained from $\mathcal{Q}$ by adding for any arrow a new arrow in the opposite direction. Fix $\vec{v}=(v_i)\in\mathbb{N}^I$, and let $G_{\vec{v}}=\Pi_{i\in I}\GL(v_i)$. One can see \cite[\S 4.3]{Gin} that $\operatorname{Rep}(\mathcal{Q}^{\operatorname{double}},\vec{v})\simeq T^\ast\operatorname{Rep}(\mathcal{Q},\vec{v})$; in particular, this space carries a natural symplectic structure, the canonical one. This is the setting in which one would like to perform the so-called \emph{Hamiltonian reduction}: the first step of this procedure consists in considering the moment map $\mu$ associated with the natural (symplectic) $G_{\vec{v}}$-action on $\operatorname{Rep}(\mathcal{Q}^{\operatorname{double}},\vec{v})$, which is given by
  \begin{equation}\label{eq:moment}
   \xymatrix@R=2pt{
    \operatorname{Rep}(\mathcal{Q}^{\operatorname{double}},\vec{v})\ar[r]^-\mu&\mathfrak{g}_{\vec{v}}^\ast\simeq\Pi_{i\in I}\End(\Com^{v_i})\\
    X\ar@{|->}[r]&\sum_{a\in E}\left(X_a\circ X_{a^\ast}-X_{a^\ast}\circ X_a\right)\,,
   }
  \end{equation}
  where $a^\ast$ is the arrow opposite to $a$. Note that the moment map is induced, at the representation level, by a moment element, which by an abuse of notation we call $\mu$ again:
  \begin{equation}\label{eq:momel}
   \mu = \sum_{a \in E}(aa^*-a^*a) \in \Com\mathcal{Q}^{\operatorname{double}}\,.
  \end{equation}
One can see that the moment element can be decomposed as follows: $\mu = (\mu_i)_{i \in I}$, where $\mu_i\in e_i\cdot \Com\mathcal{Q}^{\operatorname{double}}\cdot e_i$. The zero level set $\mu^{-1}(0)$ can be reinterpreted as $\operatorname{Rep}(B,\vec{v})$, where $B$ is the quotient $\Com\mathcal{Q}^{\operatorname{double}}/\langle\mu\rangle$.
  
  For now we only observe that the algebra $B^{\operatorname{fr}}_2$ fits into this picture. We shall return to this topic in Section \ref{sec:Nak}. For $n\neq2$, although the relations defining the ideals $I^{\operatorname{fr}}_n$ look like equations for the zero level of some ``deformed (Poissonian?) moment map,'' at present we are not able to interpret them from this perspective.\label{rem:moment}
 \end{remark}

 \begin{defin}\label{def:stabil}
  Fix $\vartheta\in\mathbb R^2$. A $(\vec{v},w)$-dimensional representation  {$(V_0,V_1,W)$} is said to be \emph{$\vartheta$-semistable} if, for any sub-representation $S=(S_0,S_1)$ {$\subseteq (V_0,V_1)$}, one has:
\begin{gather}
 \text{if $S_0\subseteq \ker e$, then $\vartheta\cdot(\dim S_0,\dim S_1)\leq0$}\,;\label{eq:stabnak1}\\[5pt] 
 \text{if $S_0\supseteq \im f_i$ for $i=1,\dots,n-1$, then $\vartheta\cdot(\dim S_0,\dim S_1)\leq\vartheta\cdot(v_0,v_1)$}\,.\label{eq:stabnak2}
\end{gather}
{For $n=1$, the condition in \eqref{eq:stabnak2} must hold for any sub-representation.} A $\vartheta$-semistable representation is \emph{$\vartheta$-stable} if strict inequality holds in \eqref{eq:stabnak1} whenever $S\neq0$ and
in \eqref{eq:stabnak2} whenever $S\neq(V_0,V_1)$.
 \end{defin}
 
 {\begin{rem}
 The notion of (semi)stability introduced in Definition \ref{def:stabil} is indeed a (semi)stability in the sense of GIT (depending on the parameter $\vartheta$). One can see this by slightly generalizing a result due to Crawley-Boevey for usual framed quivers \cite[p.~261]{CrBo}, and then referring to King's classical paper \cite{king}. This procedure is completely straightforward and will be therefore omitted. However, the scrupulous reader can find all details in \cite[Section 3]{blr}.
\end{rem}
}
 
\subsection{The main result} We denote by $/\!/_{\!\vartheta}$ the GIT quotient associated with the parameter $\vartheta$.
We shall prove the following result:
\begin{thm}\label{thm:quiver1}
 For every $n,c\geq1$, the variety $\Hilb^c(X_n)$ is isomorphic to an irreducible connected component of the quotient
\begin{equation}\label{quotient}
 \operatorname{Rep}\left(B_n^{\operatorname{fr}},\vec{v}_c,1\right)^{ss}_{\vartheta_c}/\!/_{\!\vartheta_c}\,\GL(c,\Com)\times\GL(c,\Com)\,,
\end{equation}
where $\vec{v}_c=(c,c)$ and $\vartheta_c=(2c,-2c+1)$.

More precisely, this component is given by the equations $f_1=\dots=f_{n-1}=0$. {In particular, for $n=1$, it coincides with the whole space.}
\end{thm}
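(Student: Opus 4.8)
The plan is to leverage Theorem~\ref{thm0}, which identifies $P^n(c)$ as a principal $\GL(c,\Com)\times\GL(c,\Com)$-bundle over $\Hilb^c(X_n)$, and to match this ADHM description against the quiver representation space. First I would unwind the definition of $\operatorname{Rep}(B_n^{\operatorname{fr}},\vec v_c,1)$: a representation consists of maps $(A_1,A_2;C_1,\dots,C_n;e;f_1,\dots,f_{n-1})$ with $V_0=V_1=\Com^c$ and $W=\Com$, subject to condition~(Q1) coming from the framed relations \eqref{In1f}--\eqref{In4f}. On the locus where every $f_i=0$, these framed relations collapse exactly to condition~(P1) defining $P^n(c)$, since the $i_t j$ terms (with $j$ playing the role of $e$) vanish. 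So I would first show that the closed subvariety cut out by $f_1=\dots=f_{n-1}=0$ inside $\operatorname{Rep}(B_n^{\operatorname{fr}},\vec v_c,1)$, intersected with the semistable locus, is naturally identified with a subvariety of $P^n(c)$.

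Second, I would verify that for the chosen stability parameter $\vartheta_c=(2c,-2c+1)$, the GIT $\vartheta_c$-semistability condition of Definition~\ref{def:stabil} restricts, on the $f_i=0$ locus, to precisely the conditions (P2) and (P3) defining $P^n(c)$. The key computation here is to test the stability inequalities \eqref{eq:stabnak1}--\eqref{eq:stabnak2} against sub-representations $S=(S_0,S_1)$ and show that, with $\vartheta_c=(2c,-2c+1)$, semistability forces both the regular-pencil condition (P2) and the co-stability condition (P3). Concretely, I expect that $\vartheta_c\cdot(v_0,v_1)=2c\cdot c+(-2c+1)c=c$, so the sign choices $2c>0$ and $-2c+1<0$ are engineered so that a destabilizing subspace would violate exactly one of (P2)/(P3); I would trace through which invariant subspaces arise from eigenvectors of the pencil and from $\ker e$, using Corollary~\ref{useful} and Lemma~\ref{lmP3} to translate between the pencil/eigenvector language of (P2),(P3) and the subspace language of the quiver stability.

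Third, having matched the semistable loci, I would identify the group actions: the $\GL(c,\Com)\times\GL(c,\Com)$ change-of-basis action on $(V_0,V_1)$ in the quiver picture coincides with the action \eqref{eqrho} on $P^n(c)$ (with $\phi_1$ acting on $V_0$ and $\phi_2$ on $V_1$), and the framing vertex is excluded from the group action by construction. Hence the GIT quotient of the $f_i=0$ locus is the ordinary quotient $P^n(c)/\GL(c,\Com)\times\GL(c,\Com)\simeq\Hilb^c(X_n)$ by Theorem~\ref{thm0}. I would then argue that this locus is an irreducible connected component: it is closed (cut out by $f_i=0$) and, being isomorphic to the smooth connected variety $\Hilb^c(X_n)$ of dimension $2c$, it is irreducible; to see it is also open (hence a connected component), I would compare dimensions or show that no semistable representation near a point with $f_i=0$ can have $f_i\neq0$ while preserving the relations and stability, so that the $f_i=0$ locus is cut out as a union of components.

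The main obstacle I anticipate is the third step—establishing that the $f_1=\dots=f_{n-1}=0$ locus is genuinely a \emph{connected component} rather than merely a closed irreducible subvariety. Showing closedness and irreducibility is routine, but proving it is open requires understanding the full representation variety $\operatorname{Rep}(B_n^{\operatorname{fr}},\vec v_c,1)^{ss}_{\vartheta_c}$ well enough to rule out semistable deformations turning on the $f_i$. For $n=1$ this is vacuous (there are no $f_i$, so the component is the whole space, matching the final sentence of the statement), but for $n\ge2$ I would likely need either a tangent-space computation at points of the $f_i=0$ locus showing the deformations in the $f_i$ directions are obstructed by condition~(Q1) together with stability, or a direct argument that any semistable representation with some $f_i\ne0$ lies in a different connected component. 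Carefully handling the interplay between the framed relations \eqref{In2f}--\eqref{In4f} (where the $i_t j$ terms couple the $f_i=j$ to the other data) and the semistability will be the delicate part.
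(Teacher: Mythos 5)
Your overall strategy coincides with the paper's: identify the locus $Z_n(c)=\{f_1=\dots=f_{n-1}=0\}$ inside the semistable representation space with $P^n(c)$ by matching the quiver relations with (P1) and the stability conditions (Q2), (Q3) with (P2), (P3) (this is Proposition~\ref{prop:PnZn}, proved via Lemma~\ref{lemma:P2Q2*} translating (P2) into the subspace condition (Q3*), together with Corollary~\ref{useful} and Lemma~\ref{lmKerA}), and then invoke Theorem~\ref{thm0} to identify the quotient with $\Hilb^c(X_n)$. You also correctly locate the one genuinely delicate point, namely that $Z_n(c)$ is \emph{open} in $\mathcal R_n(c)$ for $n\ge 2$. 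However, at that point your proposal stops at ``I would likely need either a tangent-space computation or a direct argument,'' without supplying either, so the crux of the theorem is left unproved. This is a genuine gap, not a routine verification: a priori nothing prevents a semistable representation arbitrarily close to $Z_n(c)$ from having some $f_i\neq 0$.

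The paper closes this gap not by a deformation or tangent-space argument but by proving the identity $Z_n(c)=\{(A_1,A_2;C_\bullet;e;f_\bullet)\in\mathcal R_n(c)\mid \text{(P2) holds}\}$; since (P2) is an open condition and $Z_n(c)$ is closed, $Z_n(c)$ is clopen, and connectedness of $\Hilb^c(X_n)$ finishes the argument. The nontrivial inclusion is that (P2) together with the framed relations and semistability forces $f_1=\dots=f_{n-1}=0$. This is done by choosing $m$ with $\det A_{2m}\neq 0$, forming $B_m$, $E_m$ and the combination $u_m=\sum_q\binom{n-2}{q-1}s_m^{n-1-q}c_m^{q-1}f_q$, and checking that the transposed data $(b_1,b_2,i,j)=({}^tB_m,{}^tE_m,{}^te,{}^tu_m)$ satisfy Nakajima's ADHM equation $[b_1,b_2]+ij=0$ and his stability condition (via Lemma~\ref{lmP3}); Nakajima's vanishing result \cite[Prop.~2.8(1)]{Nakabook} then gives $u_m=0$, whence $f_qe=0$ by condition (Q1), and $e\neq 0$ (Corollary~\ref{cor:e0}) yields $f_q=0$. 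Note also that your hope that semistability alone might ``rule out semistable deformations turning on the $f_i$'' globally is exactly what the authors state they \emph{cannot} prove (see the Remark after the theorem): they only obtain $f_i=0$ from (Q2) and the a priori stronger (Q3*), which is why the conclusion is ``irreducible connected component'' rather than irreducibility of the whole quotient. Any completion of your argument would need to reproduce something like this reduction to the Nakajima vanishing lemma, or find a substitute for it.
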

{
\begin{remark} In the case of the Hilbert scheme of $\mathbb C^2$, since the space $W$ is 1-dimensional, and due to the stability condition, Nakajima's map $j$ vanishes \cite{Nakabook,GanGin}. In the present case, that would correspond to the fact that the equations $f_i=0$ are implied by the stability conditions, and then the quotient \eqref{quotient} would be irreducible. At the moment we are unable to prove this result, and we only show that the equations $f_i=0$ follow from conditions (Q2) and (Q3*) (introduced below), which are a priori stronger than the stability conditions, which will be expressed by  (Q2) and (Q3).
\end{remark}
}

We fix $V_0=V_1=\Com^c$. The following Lemma is a direct consequence of the semistability conditions \eqref{eq:stabnak1} and \eqref{eq:stabnak2}.
\begin{lemma}\label{lemma:stabil}
 An element $(A_1,A_2;C_1,\dots,C_n;e;f_1,\dots,f_{n-1})\in \operatorname{Rep}(B_n^{\operatorname{fr}},\vec{v}_c,1)$ is $\vartheta_c$-semistable if and only if
\begin{itemize}
 \item[\rm{(Q2)}]
   for all sub-representations $S=(S_0,S_1)$ such that $S_0\subseteq\ker e$, one has $\dim S_0\leq\dim S_1$, and, if $\dim S_0=\dim S_1$, then $S=0$;
 \item[\rm{(Q3)}]
   for all sub-representations $S=(S_0,S_1)$ such that $S_0\supseteq\im f_i$, for $i=1,\dots,n-1$, one has $\dim S_0\leq\dim S_1$ {(for $n=1$, this must hold for any sub-representation)}.
\end{itemize}
Furthermore, $\vartheta_c$-semistability and $\vartheta_c$-stability are equivalent.
\end{lemma}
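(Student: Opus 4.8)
The plan is to reduce the entire statement to elementary arithmetic on the dimension vector of a sub-representation. Set $v_0=v_1=c$ and, for a sub-representation $S=(S_0,S_1)$, write $s_i=\dim S_i$, so that $0\le s_0,s_1\le c$. With $\vartheta_c=(2c,-2c+1)$ one computes
\[
\vartheta_c\cdot(s_0,s_1)=2c(s_0-s_1)+s_1,\qquad \vartheta_c\cdot(v_0,v_1)=c.
\]
The whole lemma then rests on the single observation that, since $0\le s_1\le c<2c$, the term $2c(s_0-s_1)$ dominates the sign of $\vartheta_c\cdot(s_0,s_1)$: it is $\ge 2c>c$ as soon as $s_0>s_1$, and it is $\le s_1\le c$ as soon as $s_0\le s_1$.

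First I would translate the semistability inequality \eqref{eq:stabnak1}. For a sub-representation with $S_0\subseteq\ker e$, condition \eqref{eq:stabnak1} reads $2c(s_0-s_1)+s_1\le 0$, and a three-case analysis on the sign of $s_0-s_1$ settles it: if $s_0>s_1$ the left-hand side is $\ge 2c>0$, so the inequality fails; if $s_0=s_1$ the left-hand side equals $s_1$, which is $\le0$ only for $S=0$; and if $s_0<s_1$ the left-hand side is $\le -2c+c<0$, so the inequality holds automatically. Hence \eqref{eq:stabnak1} for all such $S$ is equivalent to {\rm(Q2)}, with the equality clause of {\rm(Q2)} coming out of the middle case. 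An identical computation for \eqref{eq:stabnak2}, whose bound is now $\vartheta_c\cdot(s_0,s_1)\le c$ for every $S$ with $S_0\supseteq\im f_i$, shows the inequality fails exactly when $s_0>s_1$ (the left-hand side being $\ge 2c>c$) and holds whenever $s_0\le s_1$; this is precisely {\rm(Q3)}. The case $n=1$, in which \eqref{eq:stabnak2} is imposed on every sub-representation, is covered by the same arithmetic.

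For the equivalence of $\vartheta_c$-semistability and $\vartheta_c$-stability I would show that the chosen $\vartheta_c$ admits no properly semistable representation, so that semistability forces the strict inequalities demanded by stability. Concretely, the same sign analysis gives $\vartheta_c\cdot(s_0,s_1)=0$ if and only if $S=0$, and $\vartheta_c\cdot(s_0,s_1)=c$ if and only if $S=(V_0,V_1)$: in the range $0\le s_0,s_1\le c$ the equations $2c(s_0-s_1)+s_1=0$ and $2c(s_0-s_1)+s_1=c$ admit only the solutions $(0,0)$ and $(c,c)$, respectively. Consequently, whenever $S\neq0$ the inequality in \eqref{eq:stabnak1} is automatically strict, and whenever $S\neq(V_0,V_1)$ the inequality in \eqref{eq:stabnak2} is automatically strict; thus every $\vartheta_c$-semistable representation is $\vartheta_c$-stable, while the converse is trivial.

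I do not expect a genuine obstacle: the statement is bookkeeping once the inner products are written out. The only point requiring care is the correct handling of the boundary (equality) cases, and the single structural fact being exploited is the specific choice $\vartheta_c=(2c,-2c+1)$, engineered so that the coefficient $2c$ strictly exceeds the maximal possible value $c$ of $s_1$. This is exactly what collapses the semistable and stable loci and what pins down the equality condition in {\rm(Q2)}.
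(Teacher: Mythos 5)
Your proof is correct, and it is exactly the direct arithmetic the paper has in mind: the paper states the lemma as an immediate consequence of conditions \eqref{eq:stabnak1} and \eqref{eq:stabnak2} and omits the computation, which you have carried out accurately (including the key points that $\vartheta_c\cdot(v_0,v_1)=c$ and that the coefficient $2c$ dominates $s_1\le c$, forcing the semistable and stable loci to coincide).
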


We denote by $\mathcal{R}_n(c)$ the space $\operatorname{Rep}\left(B_n^{\operatorname{fr}},\vec{v}_c,1\right)^{ss}_{\vartheta_c}$. 

\begin{cor}\label{cor:e0}
 If $(A_1,A_2;C_1,\dots,C_n;e;f_1,\dots,f_{n-1})\in\mathcal{R}_n(c)$, the map $e$ is not zero.
\end{cor}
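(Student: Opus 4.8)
The plan is to prove the contrapositive: assuming $e=0$, I will exhibit a nonzero sub-representation that violates the stability condition (Q2). The key observation is that if $e=0$, then the condition $S_0\subseteq\ker e$ in (Q2) is automatically satisfied by \emph{every} subspace $S_0$, since $\ker e = V_0 = \Com^c$. So I would attempt to build a sub-representation $S=(S_0,S_1)$ with $S_0=V_0$ (the whole of $\Com^c$) and $S_1$ suitably small, so that $\dim S_0 > \dim S_1$, directly contradicting (Q2).

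First I would recall what it means for $S=(S_0,S_1)$ to be a sub-representation: $S_0\subseteq V_0$, $S_1\subseteq V_1$, and these subspaces must be preserved by all the structure maps of the representation. Concretely, the arrows $a_1,a_2\colon V_0\to V_1$ must send $S_0$ into $S_1$, while the arrows $C_1,\dots,C_n\colon V_1\to V_0$ must send $S_1$ into $S_0$. Taking $S_0=V_0=\Com^c$ forces $S_1\supseteq a_1(V_0)+a_2(V_0)=\im a_1+\im a_2$ (and the $C_j$-invariance of $S_0$ is then free, since $S_0$ is everything). The natural candidate is therefore the \emph{smallest} admissible choice,
\[
S_0=V_0=\Com^c,\qquad S_1=\im a_1+\im a_2\subseteq V_1\,.
\]
This is visibly a sub-representation. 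Now (Q2) applied to this $S$ (legitimate because $S_0\subseteq\ker e=V_0$) would require $\dim S_0\le\dim S_1$, and moreover $S=0$ if equality holds; since $S_0=V_0\neq0$, we would need the \emph{strict} inequality $\dim S_0<\dim S_1$, i.e. $c<\dim(\im a_1+\im a_2)$. But $\im a_1+\im a_2\subseteq V_1=\Com^c$, so $\dim S_1\le c=\dim S_0$, giving a contradiction with (Q2).

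The main obstacle I anticipate is a degenerate edge case: if $\im a_1+\im a_2$ happened to be all of $V_1$ \emph{and} there were no further constraint, one must double-check that the strict-inequality clause of (Q2) is genuinely violated rather than satisfied — but since $\dim S_1\le c=\dim S_0$ always holds here, the inequality $\dim S_0\le\dim S_1$ can hold only with equality $\dim S_1=c$, and then (Q2) demands $S=0$, which is false because $S_0=V_0\neq0$. Thus in every case (Q2) fails, so the assumption $e=0$ is incompatible with semistability. I would present this as a clean two-line argument once the sub-representation is identified: exhibit $S=(V_0,\,\im a_1+\im a_2)$, note $\dim S_0=c\ge\dim S_1$, and invoke the strict part of (Q2) to conclude $e\neq0$ for any element of $\mathcal{R}_n(c)$.
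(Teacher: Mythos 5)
Your proof is correct and follows essentially the same route as the paper: when $e=0$ the whole of $V_0$ lies in $\ker e$, and one exhibits a sub-representation with $S_0=\Com^c$ that violates (Q2). The paper simply takes $S=(\Com^c,\Com^c)$, for which $\dim S_0=\dim S_1$ together with $S\neq0$ immediately contradicts (Q2), whereas your smaller choice $S_1=\im A_1+\im A_2$ works equally well but needs the extra case distinction you carry out.
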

\begin{proof}
 If $e$ is the zero map the sub-representation $S=(\Com^c,\Com^c)$  violates condition (Q2).
\end{proof}

We observe that the action of $\GL(c,\Com)\times\GL(c,\Com)$ on $\mathcal{R}_n(c)$ is compatible with the action of the same group on the space of ADHM data $P^n(c)$ we have defined in eq.~\eqref{eqrho}. Thus, to prove Theorem \ref{thm:quiver1}, we can work directly on $P^n(c)$ and $\mathcal{R}_n(c)$ without taking the actions into consideration.

We denote by $Z_n(c)$ the closed subvariety of $\mathcal{R}_n(c)$ cut by the equations $f_1=\dots=f_{n-1}=0$ {(for $n=1$, $Z_n(c)$ coincides with the whole $\mathcal{R}_n(c)$)}.

\begin{prop}\label{prop:PnZn}
 One has $P^n(c)=Z_n(c)$. In particular, this proves Theorem \ref{thm:quiver1} when $n=1$.
\end{prop}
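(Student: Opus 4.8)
The plan is to work throughout at the level of the raw data, disregarding the group actions (as already noted, since the $\GL(c,\Com)\times\GL(c,\Com)$-actions on $P^n(c)$ and on $\mathcal{R}_n(c)$ agree), and to compare the two sets of conditions term by term. First I would record the elementary but essential reduction: on the locus $f_1=\dots=f_{n-1}=0$ every framing contribution $i_tj$ in the relations \eqref{In1f}--\eqref{In4f} vanishes, so condition (Q1) collapses to exactly condition (P1). Hence $P^n(c)$ and $Z_n(c)$ are both subsets of the same ambient space $\End(\Com^c)^{\oplus(n+2)}\oplus\Hom(\Com^c,\Com)$ cut out by (P1), and the whole statement reduces to the equivalence, assuming (P1), of the pair (P2),(P3) with the pair (Q2),(Q3).

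For the inclusion $P^n(c)\subseteq Z_n(c)$ I would exploit the charts of Section~\ref{sectionmain}. Given a point satisfying (P1),(P2),(P3), condition (P2) provides an $m$ with $\det A_{2m}\neq0$, so that $\zeta_m$ carries it to a triple $(B_m,E_m,e)\in\T(c)$ satisfying (T1),(T2), where (P3)$\Rightarrow$(T2) by Lemma~\ref{lmP3}. Now for an arbitrary sub-representation $S=(S_0,S_1)$ the maps $A_1,A_2$ send $S_0$ into $S_1$, hence so does the invertible $A_{2m}$; since $A_{2m}$ is injective, $\dim S_0=\dim A_{2m}(S_0)\leq\dim S_1$. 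This is precisely (Q3) together with the inequality in (Q2). For the equality clause of (Q2), if moreover $S_0\subseteq\ker e$ and $\dim S_0=\dim S_1$ then $S_1=A_{2m}(S_0)$; from $A_{1m}(S_0)\subseteq S_1$ one gets $B_m(S_0)\subseteq S_0$, and from $E_m=D_mA_{2m}$ with $D_m$ a linear combination of the $C_q$ one gets $E_m(S_0)=D_m(S_1)\subseteq S_0$. Thus $S_0$ is a $(B_m,E_m)$-invariant subspace contained in $\ker e$, so (T2) (together with $[B_m,E_m]=0$) forces $S_0=0$ and hence $S=0$.

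The reverse inclusion $Z_n(c)\subseteq P^n(c)$ splits into deriving (P2) and then (P3). Granting (P2), the recovery of (P3) is the mirror image of the previous paragraph: pick $m$ with $\det A_{2m}\neq0$, and suppose (P3) fails, i.e. by Lemma~\ref{lmP3} that $B_m,E_m$ have a common eigenvector $v\in\ker e$. Setting $T=\Com v$, the subspace $T$ is $(B_m,E_m)$-invariant, and using the explicit expression \eqref{eq64} of each $C_q$ as a polynomial in $B_m$ composed with $D_m$ one checks that $C_q(A_{2m}T)\subseteq T$; hence $(T,A_{2m}T)$ is a nonzero sub-representation with $S_0\subseteq\ker e$ and $\dim S_0=\dim S_1$, contradicting (Q2). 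So (Q2) forces (P3).

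The main obstacle is the remaining implication, that (P1),(Q2),(Q3) imply (P2); note that everything above only ever used (Q2),(Q3) after passing to a chart-adapted basis, whereas here one has no chart to begin with. I would argue by contraposition: if the pencil $\nu_1A_1+\nu_2A_2$ is singular, then over $\Com(\lambda)$ the map $A_1+\lambda A_2$ has a nonzero kernel, so there is a polynomial vector $v(\lambda)=\sum_{i=0}^d\lambda^iv_i$ of minimal degree with $A_1v_i=-A_2v_{i-1}$ (and $A_1v_0=0$, $A_2v_d=0$), whose coefficients $v_0,\dots,v_d$ are linearly independent. The key point is that relation (P1) propagates this kernel structure under the $C_q$: for instance for $n=1$ one computes $A_1(C_1A_2v_i)=A_2C_1A_1v_i=-A_2(C_1A_2v_{i-1})$, so that $C_1A_2$ sends kernel sequences to kernel sequences, and for $n>1$ the analogous identities $C_qA_2v_i=-C_{q+1}A_2v_{i+1}$ hold. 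One then closes up $\operatorname{span}(v_0,\dots,v_d)$ under the $C_q$ to obtain a genuine sub-representation $(S_0,S_1)$ with $S_1=A_1S_0+A_2S_0$, and the persistence of the minimal-index defect of the restricted pencil yields $\dim S_0>\dim S_1$, contradicting (Q3). Controlling this generated sub-representation — showing that the closure under the $C_q$ does not destroy the dimension gap — is the technical heart of the argument, and is where the combinatorics of the relations (P1) for general $n$ must be handled with care.
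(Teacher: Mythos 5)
Your overall architecture matches the paper's: reduce everything to the equivalence of (P2),(P3) with (Q2),(Q3) on the locus where (P1) holds and the $f_i$ vanish; handle the (P3)/(Q2) correspondence through the charts $\zeta_m$ and Lemma \ref{lmP3} (your common-eigenvector and invariant-subspace arguments there are correct, and your derivation of (Q3) from the injectivity of $A_{2m}$ is just a repackaging of Lemma \ref{lemma:P2Q2*}); and attack (P2) via the Kronecker theory of singular pencils. The first three quarters of the argument are sound. But the step you yourself flag as ``the technical heart'' --- that (Q3) forces (P2) --- is exactly where the proposal stops being a proof, and the missing ingredient can be named precisely.

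Your plan is to close up $\operatorname{span}(v_0,\dots,v_\varepsilon)$ under the $C_q$ and argue that the dimension defect survives. As stated this is not controlled: each vector added to $S_0$ during the closure can contribute up to two new vectors to $A_1S_0+A_2S_0$, so there is no a priori reason the inequality $\dim S_0>\dim S_1$ persists. The fact that rescues the argument --- and which the paper proves --- is that no closure is needed at all: with $S_0=\langle v_0,\dots,v_\varepsilon\rangle$ and $S_1=A_1(S_0)+A_2(S_0)$ one already has $\sum_q C_q(S_1)=0$. This follows from the \emph{minimality} of $\varepsilon$: using (P1) together with the chain relations $A_1v_0=0$, $A_1v_i=A_2v_{i-1}$, $A_2v_\varepsilon=0$, one reduces $\sum_q C_q(S_1)$ to $\langle C_nA_1v_1,\dots,C_nA_1v_\varepsilon\rangle$, and if this space were nonzero then $\sum_{q=1}^{\varepsilon}(-\lambda)^{q-1}C_nA_1v_q$ would be a nonzero polynomial solution of the pencil of degree strictly less than $\varepsilon$, a contradiction. (One also needs $\varepsilon\geq1$, which follows from $\ker A_1\cap\ker A_2=\{0\}$, itself a consequence of the stability conditions as in Lemma \ref{lmKerA}.) With that in hand, $(S_0,S_1)$ is already a sub-representation with $\dim S_0=\varepsilon+1>\varepsilon=\dim S_1$, violating (Q3). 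Until you supply this minimality argument, or an equivalent control on the $C_q$-closure, the implication from (Q3) to (P2) --- and with it the inclusion $Z_n(c)\subseteq P^n(c)$ --- remains open.
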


{The proof of this Proposition will require the next two lemmas.}

\begin{lemma}\label{lemma:P2Q2*}
 The matrices $A_1,A_2$ satisfy  condition \mbox{\rm(P2)} if and only if they satisfy the requirement
\begin{itemize}
 \item[\rm{(Q3*)}]
 for any subspace $S_0\subseteq\Com^c$, $\dim (A_1(S_0)+A_2(S_0))\geq\dim S_0$.
\end{itemize}
\end{lemma}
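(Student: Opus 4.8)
The plan is to prove the two implications separately: the direction (P2)$\Rightarrow$(Q3*) is a one-line dimension count using an invertible member of the pencil, while the converse (Q3*)$\Rightarrow$(P2) I would establish by contraposition, extracting the violating subspace from a minimal-degree null vector of a singular pencil.

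First I would treat (P2)$\Rightarrow$(Q3*). Assume (P2), and fix $[\nu_1,\nu_2]\in\Pu$ with $M:=\nu_1A_1+\nu_2A_2$ invertible. For an arbitrary subspace $S_0\subseteq\Com^c$ one has $M(S_0)=\nu_1A_1(S_0)+\nu_2A_2(S_0)\subseteq A_1(S_0)+A_2(S_0)$, and since $M$ is an isomorphism, $\dim M(S_0)=\dim S_0$. Hence $\dim\bigl(A_1(S_0)+A_2(S_0)\bigr)\geq\dim S_0$, which is exactly (Q3*).

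For the converse I would argue contrapositively, producing a subspace that violates (Q3*) once (P2) fails. Failure of (P2) means the homogeneous polynomial $\det(\nu_1A_1+\nu_2A_2)$ vanishes identically, i.e.\ the pencil is \emph{singular}; dehomogenizing, $A_1+tA_2$ is singular over the field $\Com(t)$, so there is a nonzero polynomial vector $x(t)=\sum_{i=0}^{\epsilon}x_it^i$ with $x_i\in\Com^c$ (after clearing denominators) satisfying $(A_1+tA_2)x(t)=0$, and I would choose it of least degree $\epsilon$. Comparing coefficients of $t^k$ yields $A_1x_0=0$, $A_1x_k=-A_2x_{k-1}$ for $1\leq k\leq\epsilon$, and $A_2x_\epsilon=0$. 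Setting $S_0=\operatorname{span}(x_0,\dots,x_\epsilon)$, these relations collapse both images, giving
\[
A_1(S_0)+A_2(S_0)=\operatorname{span}(A_2x_0,\dots,A_2x_{\epsilon-1}),
\]
a space of dimension at most $\epsilon$.

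The crux is then the \emph{minimality of $\epsilon$}, which must force $x_0,\dots,x_\epsilon$ to be linearly independent, so that $\dim S_0=\epsilon+1>\epsilon\geq\dim\bigl(A_1(S_0)+A_2(S_0)\bigr)$ and (Q3*) fails. One easily sees $x_0\neq0$ (otherwise $x(t)/t$ would be a solution of smaller degree), but the full independence is precisely the minimal-index statement in Kronecker's theory of singular pencils (cf.\ \cite{Gan}, the same circle of ideas already invoked for Lemma \ref{propSyst}). I expect this linear-independence step to be the main obstacle: the bookkeeping on $S_0$ is routine, whereas showing that a minimal-degree null vector has linearly independent coefficients is where Kronecker's structure theory genuinely enters.
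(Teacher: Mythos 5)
Your proposal is correct and follows essentially the same route as the paper: the forward direction via an invertible member of the pencil, and the converse by contraposition, taking $S_0$ to be the span of the coefficients of a minimal-degree polynomial solution of the singular pencil, with the linear independence of those coefficients imported from Kronecker's theory as presented in Gantmacher (\S XII), exactly as the paper does. The only cosmetic difference is that you read off $A_1(S_0)+A_2(S_0)=\operatorname{span}(A_2x_0,\dots,A_2x_{\epsilon-1})$ directly from the coefficient relations, whereas the paper performs the equivalent count $2\epsilon+2-(\epsilon+2)=\epsilon$.
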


\begin{proof}
Suppose that condition (P2) is satisfied by $A_1,A_2$. Let $S_0$ be any subspace, and let $\{v_1,\dots,v_k\}$ be a basis for it. Then, for suitable $[\nu_1,\nu_2]\in\Pu$, $\{(\nu_1A_1+\nu_2A_2)v_j\}_{j=1}^k$ is a set of linearly indipendent vectors in $A_1(S_0)+A_2(S_0)$. So (Q3*) is also satisfied.

To prove the converse, suppose that condition (P2) is not satisfied, i.e., the pencil $A_1+\lambda A_2$ is singular. Let us consider a polynomial solution of minimal degree $\varepsilon$ for that pencil,\footnote{By \emph{polynomial solution} we mean a solution $v(\lambda)$ of the equation $(A_1+\lambda A_2)v(\lambda)=0$ which is a polynomial in $\lambda$. Such a solution always exists \cite[p.~29]{Gan}.}
\begin{equation}\label{eq:vlam}
 v(\lambda)=v_0-\lambda v_1+\lambda^2v_2+\dots+(-1)^\varepsilon\lambda^\varepsilon v_\varepsilon,\qquad\text{with}\quad v_\varepsilon\neq0\,.
\end{equation}
Introduce the subspace $S_0:=\langle v_0,\dots,v_\varepsilon\rangle$. The vectors $v_0,\dots,v_\varepsilon$ are linearly indipendent 
(see \cite{Gan},  \S XII, Proof of Theorem 4), so  that $\dim S_0=\varepsilon+1$. Now,
\begin{equation}\label{eq:1}
 A_1(S_0)+A_2(S_0)=\langle A_1v_0,\dots,A_1v_\varepsilon,A_2v_0,\dots,A_2v_\varepsilon\rangle\,.
\end{equation}
By substituting \eqref{eq:vlam} into the equation $(A_1+\lambda A_2)v(\lambda)=0$ and by equating to zero the coefficients of the powers of $\lambda$, we get the $\varepsilon+2$ relations
\begin{equation}\label{eq:vlambda}
 A_1v_0=0\ ,\quad A_2v_0-A_1v_1=0\ ,\ \dots\ ,\quad A_2v_{\varepsilon-1}-A_1v_\varepsilon=0\ ,\quad A_2v_\varepsilon=0\,.
\end{equation}
Hence the maximum number of linearly indipendent vectors in \eqref{eq:1} is
\[
 2\varepsilon+2-(\varepsilon+2)=\varepsilon<\varepsilon+1\,.
\]
\end{proof}

\begin{lemma}
\label{lmKerA}
 If $\left(A_1,A_2;C_1,\dots,C_{n};e;f_1,\dots,f_{n-1}\right)\in\mathcal{R}_n(c)$, $\ker A_1\cap\ker A_2=\{0\}$.
\end{lemma}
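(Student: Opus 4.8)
The plan is to exhibit $K := \ker A_1\cap\ker A_2\subseteq V_0$ as the degree-$0$ part of a sub-representation and then play the two semistability conditions of Lemma \ref{lemma:stabil} against each other. First I would check that $(S_0,S_1)=(K,0)$ is a genuine sub-representation of $(V_0,V_1)$: indeed $A_1(K)=A_2(K)=\{0\}\subseteq S_1$ and $C_j(S_1)=C_j(0)=\{0\}\subseteq K$ for every $j=1,\dots,n$, so the pair is preserved by all the maps $A_1,A_2,C_1,\dots,C_n$ among $V_0$ and $V_1$. (As in Definition \ref{def:stabil}, the framing maps $e$ and $f_i$ do not enter the definition of a sub-representation; they appear only through the stability conditions.) The whole point is then to force $\dim K\leq\dim S_1=0$.

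For $n=1$ this is immediate: by Lemma \ref{lemma:stabil}, condition (Q3) is required of \emph{every} sub-representation, so applying it to $(K,0)$ gives $\dim K\leq 0$, whence $K=\{0\}$.

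For $n\geq2$ the relations (Q1) become the crucial ingredient. Reading the second relation in each of \eqref{In2f}--\eqref{In4f} at the representation level gives, for $q=1,\dots,n-1$,
\[
 C_qA_1+f_q\,e=C_{q+1}A_2\,,
\]
as endomorphisms of $V_0$. Evaluating on a vector $v\in K$ and using $A_1v=A_2v=0$ yields $f_q(ev)=0$ for all $q$. Since $W$ is one-dimensional, $ev$ is a scalar, so this dichotomy splits the argument cleanly into two cases, each closed by one stability condition. If $K\subseteq\ker e$, then $(K,0)$ satisfies the hypothesis of (Q2), which gives $\dim K\leq0$ and hence $K=\{0\}$. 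Otherwise there is some $v\in K$ with $ev\neq0$; then $f_q(ev)=0$ with $ev$ a nonzero scalar forces $f_q=0$ for every $q$, so that $\im f_q=\{0\}\subseteq K$ and the hypothesis of (Q3) is met for $(K,0)$, again yielding $\dim K\leq0$.

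The step I expect to be the main obstacle is precisely this reconciliation of the two semistability conditions: a priori neither $K\subseteq\ker e$ nor $K\supseteq\im f_i$ need hold, so the conclusion cannot be reached by invoking a single condition. The relations (Q1) are exactly what bridge the gap, and the hypothesis $\dim W=1$ is essential—it is what turns $f_q(ev)=0$ with $ev\neq0$ into the vanishing of the maps $f_q$ themselves; without it one would only learn that $ev$ lies in a common kernel of the $f_q$, which would not suffice. Everything else is a routine verification.
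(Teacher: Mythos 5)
Your proof is correct and follows essentially the same route as the paper: the paper takes a single nonzero $v\in\ker A_1\cap\ker A_2$ and shows the sub-representation $(\langle v\rangle,\{0\})$ violates (Q3) for $n=1$, and for $n\geq2$ violates (Q2) when $v\in\ker e$ or (Q3) when $v\notin\ker e$, using exactly the identity $f_qe(v)=(f_qe+C_qA_1-C_{q+1}A_2)(v)=0$ and the one-dimensionality of $W$ to get $\im f_q=0$. Your only cosmetic difference is working with all of $K$ rather than a single vector.
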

\begin{proof}
 Suppose that there exists a nonzero vector $v\in\Com^{c}$ such that $A_{i}(v)=0$ for $i=1,2$. {For $n=1$, the sub-representation $(\langle v\rangle,\{0\})$ violates condition (Q3).} For $n\geq 2$, if $v\in\ker e$, the sub-representation $(\langle v\rangle,\{0\})$ violates condition (Q2); if $v\notin\ker e$, one has $\im f_q=\langle f_qe(v)\rangle$, for $q=1,\dots,n-1$, but $f_qe(v)=(f_qe+C_qA_1-C_{q+1}A_2)(v)=0$, so that the sub-representation $(\langle v\rangle,\{0\})$ violates condition (Q3).
\end{proof}

\begin{proof}[Proof of Proposition \ref{prop:PnZn}]
 First, observe that for a point in $Z_n(c)$ condition (P1) expresses exactly the constraints in \eqref{In1f}--\eqref{In4f}, and  (Q3) amounts to say that \emph{$\dim S_0\leq\dim S_1$ for all sub-representations $S=(S_0,S_1)$}. We begin by showing that $P^n(c)\subseteq Z_n(c)$, i.e., that any element of $P^n(c)$ satisfies conditions (Q2) and (Q3).
 
 Let $S=(S_0,S_1)$ be a sub-representation which makes condition (Q2) false. In particular,  as Lemma \ref{lemma:P2Q2*}  implies  $\dim S_0\le \dim S_1$,  one must have
 $\dim S_0=\dim S_1>0$, and $S_0\subseteq\ker e$. { By   Corollary \ref{useful}}, which works for the restrictions $A_1|_{S_0}$, $A_2|_{S_0}$, $C_1|_{S_1}$, \dots, $C_n|_{S_1}$ as well, we can produce a vector $0\neq v\in S_0$ and parameters $\lambda_1,\lambda_2,\mu_1,\mu_2$ that fail to satisfy condition (P3). As a consequence, condition (Q2) holds in $P^n(c)$.
 
Since condition (Q3*) is clearly stronger than (Q3), Lemma \ref{lemma:P2Q2*} entails that also condition (Q3) holds in $P^n(c)$. 

As for the opposite inclusion, $Z_n(c)\subseteq P^n(c)$, we have to show that any element of $Z_n(c)$ satisfies conditions (P2) and (P3).
Suppose that the pencil $A_{1}+\lambda A_{2}$ is singular. Let $v(\lambda)$ be a polynomial solution of minimal degree for the pencil defined in eq.\eqref{eq:vlam}. Lemma \ref{lmKerA} implies   $\varepsilon\geq1$. Set
\begin{equation*}
\begin{aligned}
S_{0}&:=\langle v_{0},\dots,v_{\varepsilon}\rangle\,,\\
S_{1}&:=A_{1}(V_{0})+A_{2}(V_{0})\,,\\
S_{2}&:=\sum_{q=1}^{n}C_{q}(V_{1})\,.
\end{aligned}
\end{equation*}
We know that $\dim S_0=\varepsilon+1$ and $\dim S_1=\varepsilon$ \cite[XII, Proof of Theorem 4]{Gan}. It is not difficult to show that $S_2=0$. In fact, by using repeatedly condition (P1) and the relations in \eqref{eq:vlambda}, one sees that
\[
 S_2=\langle C_nA_1v_1,\dots,C_nA_1v_\varepsilon\rangle\,,
\]
and, if $S_2\neq0$, by direct computation one can check that $\sum_{q=1}^\varepsilon (-\lambda)^{q-1}C_nA_1v_q$ is a polynomial solution of degree smaller than $\varepsilon$. Thus, the sub-representation $S=(S_{0},S_{1})$ fails to satisfy  (Q3). As a consequence, condition (P2) holds in $Z_n(c)$.

Finally, let $v\in\Com^c$ be a vector violating condition (P3). Set
\[
 S_0:=\langle v\rangle\,,\quad S_1:=\langle A_1v,A_2v\rangle\,.
\]
In particular, $v\neq0$, so $\dim S_0=1$. The conditions $\lambda_2A_1v+\lambda_1A_2v=0$ {and Lemma \ref{lmKerA}} together imply   $\dim S_1=1$. We claim that
\begin{equation}
S_2:=\sum_{q=1}^nC_q(S_2)\subseteq\langle v\rangle\,.
\label{eqVv}
\end{equation}
Suppose indeed that $S_1=\langle A_1v\rangle$. Then $A_{2}v=\lambda A_{1}v$ for some $\lambda\in\Com$. This implies  
\begin{equation*}
S_2=\langle C_{1}A_{1}v,\dots,C_{n}A_{1}v\rangle\,.
\end{equation*}
Now, by hypothesis $C_{n}A_{1}v\in\langle v\rangle$; one has
\[
 C_{q}A_{1}v=C_{q+1}A_{2}v=\lambda C_{q+1}A_{1}v\qquad\text{for $q=1,\dots,n-1$}\,,
\]
so that by induction one gets $C_{q}A_{1}v=\lambda^{n-q}C_nA_1v\subseteq\langle v\rangle$, for $q=1,\dots,n-1$. The case $S_1=\langle A_2v\rangle$ is completely analogous. Thus, the claim is proved, and as $S_0\subseteq\ker e$ by hypothesis,  $(S_0,S_1)$ is a sub-representation that violates condition (Q2). So $P^n(c)=Z_n(c)$. Note that  condition (P3) holds on the whole of $\mathcal{R}_n(c)$.
\end{proof}

\begin{proof}[Proof of Theorem \ref{thm:quiver1} when $n\geq2$]
 Having proved Proposition \ref{prop:PnZn}, it remains to show that $Z_n(c)$ is a connected component of $\mathcal{R}_n(c)$, i.e.,
 that $Z_n(c)$ is closed and open  in $\mathcal{R}_n(c)$, and   it is connected. This last statement follows  from the fact that $Z_n(c)=P^n(c)$ and from the connectedness of $\Hilb^c(X_n)$ \cite[Prop.~2.3]{fog68}.

We claim that the closed subvariety $Z_n(c)$ coincides with the open subset of $\mathcal{R}_n(c)$ where condition (P2) is satisfied, that is, we assume condition (P2) and prove that $f_1=\dots=f_{n-1}=0$.

Given an element $(A_1,A_2;C_1,\dots,C_n;e;f_1,\dots,f_{n-1})\in\mathcal{R}_n(c)$, we introduce the matrices $A_{1m}$, $A_{2m}$ and $E_m$  as in eq.~\eqref{eq410}; by  condition (P2), we can choose $m$ such that $\det A_{2m}\neq 0$. After introducing the matrix $B_m=A_{2m}^{-1}A_{1m}$, we define
\[
 u_m:=\sum_{q=1}^{n-1}\binom{n-2}{q-1}s_m^{n-1-q}c_m^{q-1}f_q\,,
\]
and   set
\[
 b_1={^tB_m}\ ,\qquad b_2={^tE_m}\ ,\qquad i={^te}\ ,\qquad j={^tu_m}\,.
\]
The data $b_1,b_2,i,j$ satisfy:
\begin{itemize}
 \item[(i)] $[b_1,b_2]+ij=0$;
 \item[(ii)]there exists no proper subspace $S\subsetneq\Com^c$ such that $b_\alpha(S)\subseteq S$ ($\alpha=1,2$) and $\im i\subseteq S$.
\end{itemize}
Indeed, relation (i) follows by direct computation, by suitably manipulating condition (Q1) and the expressions for the $C_q$'s given in eq.~\eqref{eq64}. As for condition (ii), it suffices to observe that the second statement of Lemma \ref{lmP3}, which applies here too, is equivalent to the maximality of the rank of
\[
 \begin{pmatrix}
  -(b_2-w\mathbf{1}_c)&(b_1-z\mathbf{1}_c)&i
 \end{pmatrix}\,,
\]
so that we can apply \cite[Lemma 2.7 (2)]{Nakabook}. By \cite[Prop.~2.8 (1)]{Nakabook}, one has $u_m=0$, which implies that $B_m$ and $E_m$ commute; by using this fact in combination with condition (Q1), one shows that $f_qe=0$, for $q=1,\dots,n-1$. Corollary \ref{cor:e0} allows one  to conclude that $f_1=\dots=f_{n-1}=0$, as wanted.
\end{proof}

\subsection{Comparison with Nakajima quiver varieties}\label{sec:Nak}
We focus now on the case $n=2$; in particular, as a consequence of Theorem \ref{thm:quiver1}, we recover a result for ALE spaces due to Kuznetsov \cite{kuz}.  We recall that, according to Nakajima \cite{naka1994}, any quiver $\mathcal{Q}$ with vertex set $I$ is associated with a quiver variety $\M_{\lambda,\vartheta}(\mathcal{Q},\vec{v},\vec{w})$, where $\vec{v}=(v_i),\vec{w}=(w_i)\in\mathbb{N}^{I}$, $\lambda=(\lambda_i)\in\Com^{I}$ and $\vartheta\in\mathbb R^{I}$. The main steps of this construction are the following (see \cite{Gin} for further details):
\begin{enumerate}
\item[(i)] one considers the quiver $\mathcal{Q}^{\operatorname{fr}}$, the framed version of $\mathcal{Q}$, which is obtained by taking as set of vertices the disjoint union $I\sqcup I'$, where $I'$ is a copy of $I$, and by adding to the set of arrows $E$   one arrow from the $i$-th vertex to the $i'$-th vertex, for any $i\in I$. We call $\{d_{i}\}_{i \in I}$ these new arrows;
 \item[(ii)] as in Remark \ref{quattrodue}, one introduces an auxiliary quiver $\mathcal{Q}^{\operatorname{fr},\operatorname{double}}$, the double of $\mathcal{Q}^{\operatorname{fr}}$, and consider the space $\operatorname{Rep}(\mathcal{Q}^{\operatorname{fr},\operatorname{double}},\vec{v},\vec{w})$ of $(\vec{v},\vec{w})$-dimensional representations of $\Com \mathcal{Q}^{\operatorname{fr},\operatorname{double}}$. The group $G_{\vec{v}} = \prod_{i\in I} \GL(v_i)$ acts naturally on this space if one regards $\GL(v_i)$ as the group of   automorphisms of the vector space associated with the $i$-th vertex (of the original quiver $\mathcal{Q}$);
\item[(iii)] since the action of $G_{\vec{v}}$ on $\operatorname{Rep}(\mathcal{Q}^{\operatorname{fr},\operatorname{double}},\vec{v},\vec{w})$ is symplectic, one can introduce a moment map 
\begin{equation}
\mu=\sum_{a\in E}\left(X_a\circ X_{a^\ast}-X_{a^\ast}\circ X_a\right)+\sum_{i \in I}X_{d_{i}^{*}}\circ X_{d_{i}}
\end{equation}
(cf. eq.~\eqref{eq:moment}), corresponding to a moment element, which we call again $\mu$ (analogous to eq.~\eqref{eq:momel});
 \item[(iv)] one defines the \emph{framed pre-projective algebra $\Pi^{\operatorname{fr}}_{\lambda}\mathcal{Q}$ of $\mathcal{Q}$ with parameter $\lambda$} as the quotient $\Com\mathcal{Q}^{\operatorname{fr},\operatorname{double}}/J$, where $J$ is the ideal of $\Com\mathcal{Q}^{\operatorname{fr},\operatorname{double}}$ generated by the elements $\{\mu_i-\lambda_i\}_{i\in I}$. The fibre
 $$\mu^{-1}(\sum_{i \in I}\lambda_i \mathbf{1}_{v_i})  \subset  \operatorname{Rep}(\mathcal{Q}^{\operatorname{fr},\operatorname{double}},\vec{v},\vec{w})$$
  is the space of $(\vec{v},\vec{w})$-dimensional representations of $\Pi^{\operatorname{fr}}_{\lambda}\mathcal{Q}$, which we denote  $\operatorname{Rep}(\Pi^{\operatorname{fr}}_{\lambda}\mathcal{Q},\vec{v},\vec{w})$;
 \item[(v)]
one defines  the quotient $\M_{\lambda,\vartheta}(\mathcal{Q},\vec{v},\vec{w}):=\operatorname{Rep}(\Pi_{\lambda}^{\operatorname{fr}}\mathcal{{Q}},\vec{v},\vec{w})^{\operatorname{ss}}_\vartheta/\!/_{\!\vartheta}\, G_{\vec{v}}$.
\end{enumerate}
We denote by $\mathcal{A}$ the affine Dynkin quiver of type $A_1^{(1)}$, namely,

\[
 \xymatrix@R-2.3em{
   \mbox{\scriptsize$0$}&&\mbox{\scriptsize$1$}\\
   \bullet\ar@/^/[rr]^a&&\bullet\ar@/^/[ll]^b
 }
\xymatrix@R-2.3em{
  \\
  \\
  \\
    }
\]

\begin{cor}\label{cor:quiver}
For every $c\geq1$, the variety $\Hilb^c(X_2)$ is isomorphic to the Nakajima quiver variety $\mathcal{M}_{0,\vartheta_c}(\mathcal{A},\vec{v}_c,\vec{w})$, where $\vartheta_c=(2c,-2c+1)$, $\vec{v}_c=(c,c)$ and $\vec{w}=(1,0)$.
\end{cor}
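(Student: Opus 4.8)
The plan is to show that for $n=2$ the algebra $B_2^{\operatorname{fr}}$ is literally a framed preprojective algebra, so that the quotient \eqref{quotient} is a genuine Nakajima quiver variety in the sense of steps (i)--(v), and then to upgrade the ``connected component'' conclusion of Theorem \ref{thm:quiver1} to an equality by invoking connectivity. This is the promise made in Remark \ref{rem:moment}.

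First I would exhibit an explicit isomorphism $B_2^{\operatorname{fr}}\cong\Pi^{\operatorname{fr}}_0\mathcal A$. Writing $a\colon0\to1$ and $b\colon1\to0$ for the two arrows of $\mathcal A$, and $a^\ast,b^\ast$ for their opposites in the double, and placing the framing at the vertex $0$ with $\vec w=(1,0)$ (so the only nonzero framing arrows are $d_0\colon0\to0'$ and $d_0^\ast\colon0'\to0$, with $V_{0'}=W=\Com$, while $d_1,d_1^\ast$ vanish since $w_1=0$), I would set up the dictionary
\[
A_1=X_a,\quad A_2=X_{b^\ast},\quad C_1=X_{a^\ast},\quad C_2=X_b,\quad e=X_{d_0},\quad f_1=-X_{d_0^\ast}.
\]
Computing the moment element $\mu=(\mu_0,\mu_1)$ of $\mathcal A^{\operatorname{fr},\operatorname{double}}$ from step (iii) then gives $\mu_1=A_1C_1-A_2C_2$ and $\mu_0=-(C_1A_1+f_1e-C_2A_2)$, so that the relations $\mu_0=\mu_1=0$ cutting out $\operatorname{Rep}(\Pi^{\operatorname{fr}}_0\mathcal A,\vec v_c,\vec w)$ are exactly the relations \eqref{In2f} generating $I^{\operatorname{fr}}_2$. (The sign in $f_1=-X_{d_0^\ast}$ is precisely what reconciles the $+i_1j$ term of \eqref{In2f} with the $+X_{d_0^\ast}X_{d_0}$ term of the moment map.) This algebra identification is the computational heart of the argument, and the only place where genuine care with orientations and signs is needed.

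Under this dictionary the $(\vec v_c,1)$-dimensional representation space of $B_2^{\operatorname{fr}}$ maps isomorphically onto the $(\vec v_c,\vec w)$-dimensional representation space of $\Pi^{\operatorname{fr}}_0\mathcal A$, compatibly with the $\GL(c,\Com)\times\GL(c,\Com)$-action (the suppressed $\GL(1,\Com)$ factor being the same on both sides). Since the King stability of Definition \ref{def:stabil} with parameter $\vartheta_c$ is exactly the stability entering step (v), the semistable loci and hence the GIT quotients coincide, giving
\[
\mathcal R_2(c)/\!/_{\!\vartheta_c}\,\bigl(\GL(c,\Com)\times\GL(c,\Com)\bigr)\;\simeq\;\M_{0,\vartheta_c}(\mathcal A,\vec v_c,\vec w).
\]
Now by Theorem \ref{thm:quiver1} (case $n=2$) the image of $Z_2(c)$, namely $\Hilb^c(X_2)$, is a connected component of the left-hand side, hence of $\M_{0,\vartheta_c}(\mathcal A,\vec v_c,\vec w)$. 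Because $\vartheta_c$ is generic --- semistability equals stability by Lemma \ref{lemma:stabil} --- this Nakajima variety is smooth of pure dimension $2\,\vec v_c\cdot\vec w=2c$, the Cartan form of $A_1^{(1)}$ vanishing on $(c,c)$; thus its connected components are its irreducible components. As a connected smooth variety is irreducible, it suffices that $\M_{0,\vartheta_c}(\mathcal A,\vec v_c,\vec w)$ be connected, and then it must coincide with its component $\Hilb^c(X_2)$. Connectivity holds because it is a Nakajima quiver variety of affine type with generic stability parameter; equivalently it is Kuznetsov's realization of $\Hilb^c$ of the ALE space of type $A_1$ \cite{kuz}, which is $\Hilb^c(X_2)$ since $X_2$ is the minimal resolution of $\Com^2/\Z_2$.

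The main obstacle is this last step. Theorem \ref{thm:quiver1} only produces $\Hilb^c(X_2)$ as a component, i.e.\ it establishes $f_1=0$ only on the open locus where condition (P2) holds, and --- as the remark following that theorem stresses --- one cannot in general force $f_1=0$ from stability alone. What rescues $n=2$ is precisely that $B_2^{\operatorname{fr}}$ is a bona fide framed preprojective algebra, so that the external input of connectivity of the corresponding Nakajima variety (equivalently Kuznetsov's theorem) becomes available and collapses the component onto the whole variety. For $n\neq2$ this structure is absent, which is exactly why Theorem \ref{thm:quiver1} can only assert a connected component there.
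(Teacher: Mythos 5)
Your proposal is correct and follows essentially the same route as the paper's proof: identify $\mathcal{Q}_2^{\operatorname{fr}}$ with the framed double of $\mathcal{A}$ for $\vec w=(1,0)$ (so that $I_2^{\operatorname{fr}}$ becomes the moment-map ideal and $B_2^{\operatorname{fr}}\cong\Pi^{\operatorname{fr}}_0\mathcal A$), invoke Crawley-Boevey's connectedness of $\mathcal{M}_{0,\vartheta_c}(\mathcal A,\vec v_c,\vec w)$, and conclude from Theorem \ref{thm:quiver1} that the connected component $\Hilb^c(X_2)$ must be the whole variety. You merely spell out the sign bookkeeping and the ``component of a connected space'' step more explicitly than the paper does.
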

\begin{proof}
As recalled above, one wants to consider the quiver $\mathcal{A}^{\operatorname{fr},\operatorname{double}}$:
\begin{equation}
 \xymatrix@R-2.3em{
&\mbox{\scriptsize$0$}&&\mbox{\scriptsize$1$}\\
&\bullet\ar@/_2ex/[lddddd]_{d_{0}}\ar@/^/[rr]^{a}\ar@/^4ex/[rr]^{b^{*}}&&\bullet\ar@/^/[ll]^{b}\ar@/^4ex/[ll]^{a^{*}} \ar@/^2ex/[rddddd]^{d_{1}}  \\ \\ \\ \\ \\ 
\bullet\ar@/_2ex/[ruuuuu]_{d_0^{*}}&&&& \bullet \ar@/^2ex/[luuuuu]^{d_1^{*}}\\
\mbox{\scriptsize$0'$}&&&& \mbox{\scriptsize$1'$}
}
\label{eqK}
\end{equation}
The choice $\vec{w}=(1,0)$ implies that the linear morphisms associated with $d_{1}$ and $d^{*}_{1}$ vanish, and this allows one to construct $\M_{\lambda,\vartheta}(\mathcal{A},\vec{v},\vec{w})$ using $\mathcal{Q}_{2}^{\textmd{fr}}$ (see \eqref{eqQnfr}). By a general result proved by Crawley-Boevey \cite{CrBo}, the variety $\mathcal{M}_{0,\vartheta_c}(\mathcal{A},\vec{v}_c,\vec{w})$ is connected (see also \cite[Theorem~5.2.2]{Gin} for some comments). The thesis now follows from Theorem \ref{thm:quiver1}.
\end{proof}

\bigskip

\appendix \section{Proof of Proposition \ref{pro1}}
\label{appa} 

\subsection{A first step}  {We recall from Section \ref{background} the notation $G_{\vec{k}}=\Aut(\Uk)\times\Aut(\Vk)\times\Aut(\Wk)$.
Moreover, there we introduced a principal $G_{\vec{k}}$-bundle $P_{\vec{k}}$ such that 
$\Hilb^c(X_n)$ can be described   as a quotient $P_{\vec{k}}/G_{\vec{k}}$ for $\vec{k}=(n,1,0,c)$ (from now on we fix this value of $\vec{k}$). We recall also that} the group $\GL(c, \Com)$ acts on the space  $\T(c)$ according to the rule
\eqref{eqGL(c)onK}. Moreover,  by embedding $\GL(c, \Com)$  into  $\Gk$  by  the equations
\begin{equation}\label{eqiota}
\iota\colon \phi \mapsto 
\left(^t\phi^{-1},\,
\operatorname{diag} ({^t\phi^{-1}},\, ^t\phi^{-1}, \, 1),\,
^t\phi^{-1}\right)\,,
\end{equation}
the group   $\GL(c, \Com)$  also acts on $\Pk $. We can therefore state the following Lemma.

\begin{lemma}\label{lmclimm}
There is a $\GL(c, \Com)$-equivariant closed immersion 
$j_{m}\colon \T(c) \to P_{\vec{k},m}$,
where $\{P_{\vec{k},m}\} $ is the open cover of  $\Pk $ given by the inverse image of the open cover  {$\{U^{n,c}_m\}$ of $\operatorname{Hilb}^c(X_n)$ introduced in eq.~\eqref{U}}.
\end{lemma}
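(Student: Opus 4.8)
The plan is to build $j_m$ directly from the explicit monad construction of \cite{bbr}, rather than from the intrinsic description of $P^n(c)$. Via the isomorphism $U^{n,c}_m\simeq\operatorname{Hilb}^c(\Com^2)$ of Section \ref{sectionmain}, a point $(b_1,b_2,e)\in\T(c)$ determines a length-$c$ subscheme of $X_n\setminus F_m\simeq\Com^2$, hence a framed sheaf $(\E,\theta)$ on $\Sigma_n$ that is trivial on $F_m$. Recall that for $\vec{k}=(n,1,0,c)$ the maps $\alpha\colon\Uk\to\Vk$ and $\beta\colon\Vk\to\Wk$ of \eqref{fundamentalmonad} are written in \cite{bbr} as matrices of global sections of line bundles on $\Sigma_n$ whose coefficients are linear-algebraic data. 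First I would express these coefficients explicitly as functions of $(b_1,b_2,e)$ in the chart adapted to $F_m$ (through the numbers $c_m,s_m$), producing a distinguished pair $(\alpha,\beta)$ together with a framing. Since the formulas are polynomial in $(b_1,b_2,e)$, this yields a morphism $j_m$ into the space of framed monad data; the content of the lemma is that it lands in $\Pkm$ and is an equivariant closed immersion.

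I would then check that $j_m$ takes values in $\Pkm$. The complex identity $\beta\alpha=0$, i.e.\ that $(\alpha,\beta)\in\Lk$, should reduce by direct computation to the ADHM equation $[b_1,b_2]=0$ of (T1), while fibrewise injectivity of $\alpha$ and surjectivity of $\beta$ --- so that the cohomology is a genuine torsion-free sheaf, trivial along $\li$ --- should follow from the co-stability condition (T2), as in Nakajima's case. By construction the resulting cohomology sheaf is the framed sheaf $(\E,\theta)$ above, which is trivial on $F_m$; hence the point lies over $U^{n,c}_m$, and together with its (essentially unique, rank-one) framing it belongs to $\Pkm$.

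It remains to verify equivariance and the closed-immersion property. For equivariance I would match the actions directly: applying \eqref{eqGL(c)onK} to $(b_1,b_2,e)$ conjugates the substituted matrices precisely by the block automorphism $\iota(\phi)\in\Gk$ of \eqref{eqiota}, so that $j_m(\phi\cdot(b_1,b_2,e))=\iota(\phi)\cdot j_m(b_1,b_2,e)$. Since $j_m$ is then $\GL(c,\Com)$-equivariant and covers the base isomorphism $\operatorname{Hilb}^c(\Com^2)\simeq U^{n,c}_m$, on each fibre it maps the $\GL(c,\Com)$-torsor $\T(c)$ onto the orbit $\iota(\GL(c,\Com))\cdot p$ inside the $\Gk$-torsor fibre of $\Pkm$; as $\iota$ is a closed embedding of algebraic groups, this orbit is closed in the fibre, so by local triviality over $U^{n,c}_m$ the image of $j_m$ is closed, and the injectivity of $\iota$ makes $j_m$ injective on points and on tangent spaces. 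Thus $j_m$ is a $\GL(c,\Com)$-equivariant closed immersion, exhibiting $\Pkm$ as the bundle induced from $\T(c)$ along $\iota$.

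The step I expect to be the main obstacle is the explicit bookkeeping underlying the first two paragraphs: writing the monad maps of \cite{bbr} in the $F_m$-adapted chart and verifying by hand that $\beta\alpha=0$ together with the rank conditions translate exactly into (T1) and (T2). This computation is the technical core on which the clean structural argument for equivariance and closedness rests.
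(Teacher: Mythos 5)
Your construction of $j_m$ and the verification that it lands in $\Pkm$ follow the paper's proof closely: the paper likewise writes $\alpha,\beta$ (and a framing $\xi$) explicitly in the coordinates $[y_{1m},y_{2m}]$ adapted to $F_m$, with entries affine in $(b_1,b_2,e)$, and checks that $\beta\circ\alpha=0$ corresponds to (T1) while the fibrewise rank conditions correspond to (T2), the remaining monad conditions (injectivity of $\alpha$, triviality along $\li$ and along $F_m$, maximal rank of $\xi$) holding identically on the image. Where you genuinely diverge is in establishing the closed-immersion property. The paper extends $j_m$ to a map $\tilde{\jmath}_m$ defined on all of $\End(\Com^c)^{\oplus2}\oplus\Hom(\Com^c,\Com)$, observes that $\tilde{\jmath}_m$ is visibly a closed immersion (its entries are affine in the data, with an evident left inverse reading off $b_1,b_2,e$ from the coefficients of $\alpha,\beta$), and then proves the two-sided identity $\im\tilde{\jmath}_m\cap\Pkm=\im j_m$, so that closedness of $\im j_m$ in $\Pkm$ is inherited from closedness of $\im\tilde{\jmath}_m$ in the ambient space. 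You instead exploit the principal-bundle structures of $\T(c)$ over $\operatorname{Hilb}^c(\Com^2)$ and of $\Pkm$ over $U^{n,c}_m$, together with the fact that $\iota(\GL(c,\Com))$ is a closed subgroup of $\Gk$, so that each fibre maps onto a closed coset and local (iso)triviality gives global closedness. Both routes work; the paper's is more elementary and gives injectivity on tangent spaces for free, whereas yours requires two additional inputs that you should make explicit: (a) that $j_m$ really covers the identification $\operatorname{Hilb}^c(\Com^2)\simeq U^{n,c}_m$ on the base (i.e.\ the cohomology of the monad you write down is the ideal sheaf determined by the ADHM data), and (b) the injectivity of $dj_m$, which you assert but do not derive --- in the paper's setup it is immediate from the affine-linear form of $\tilde{\jmath}_m$, and you could borrow exactly that observation to close this point.
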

This immersion  induces an isomorphism
\begin{equation}\eta_m\colon\T(c)/\GL(c, \Com)\longrightarrow P_{\vec{k},m}/\Gk\simeq U^{n,c}_m\,.
\label{eqeta}
\end{equation}
We start by introducing some constructions that will be used in the proof of this Lemma. In particular,  we  define the immersion $j_{m}$ for any given $m\in\{0,\dots,c\}$. To this aim, after fixing homogeneous coordinates $[y_1,y_2]$ for $\Pu$ (cfr. eq.~\eqref{eqSn}), we introduce additional $c$ pairs of coordinates 
\begin{equation*}
[y_{1m},y_{2m}]=[c_{m}y_{1}+s_{m}y_{2},-s_{m}y_{1}+c_{m}y_{2}]\qquad m=0,\dots,c\,,
\label{beta10}
\end{equation*}
where  $c_m$ and $s_m$ are the real numbers defined in eq.~\eqref{eqcmsm}. The set $\left\{y_{2m}^{q}y_{1m}^{h-q}\right\}_{q=0}^{h}$ is a basis for $H^{0}\left(\On(0,h)\right)=H^{0}\left(\pi^{*}\Ol_{\Pu}(h)\right)$ for all $h\geq1$, where $\pi\colon\Sigma_{n}\longrightarrow\Pu$ is the canonical projection. Furthermore  the (unique up to homotheties) global section  $s_{E}$  of $\On(E)$ induces an injection $\On(0,n)\rightarrowtail\On(1,0)$, so that the set $\left\{(y_{2m}^{q}y_{1m}^{n-q})s_{E}\right\}_{q=0}^{n}\cup\{s_{\infty}\}$ is a basis for $H^{0}\left(\On(1,0)\right)$, where $s_{\infty}$ is the section characterized by the condition $\{s_{\infty}=0\}=\li$. This notation allows us to expand the morphisms $\alpha$ and $\beta$ (see eq. \eqref{fundamentalmonad}) as follows:
\begin{equation}
\begin{aligned}
\alpha&=
\begin{pmatrix}
\sum_{q=0}^{n}\alpha_{1q}^{(m)}\left(y_{2m}^{q}y_{1m}^{n-q}s_{E}\right)+\alpha_{1,n+1}s_{\infty}\\[7pt]
\alpha_{20}^{(m)}y_{1m}+\alpha_{21}^{(m)}y_{2m}
\end{pmatrix}\\
\beta&=
\begin{pmatrix}
\beta_{10}^{(m)}y_{1m}+\beta_{11}^{(m)}y_{2m} &
\sum_{q=0}^{n}\beta_{2q}^{(m)}\left(y_{2m}^{q}y_{1m}^{n-q}s_{E}\right)+\beta_{2,n+1}s_{\infty}
\end{pmatrix}\,.
\end{aligned}
\label{eqab}
\end{equation}
Note that the choice of a framing for a torsion-free sheaf $\E$ which is trivial at infinity, and is the cohomology of a monad of the type \eqref{fundamentalmonad}, is equivalent to the choice of a basis for $H^{0}(\E|_{\ell_{\infty}})\simeq H^0(\ker\beta|_{\ell_\infty})=\ker H^0(\beta|_{\ell_\infty})$, i.e.,  it is given by an injective linear map
\begin{equation}
 \xi\colon\Com^r\to H^{0}(\Vk|_{\li}) \quad\text{such that}\quad H^0(\beta|_{\ell_\infty})\circ\xi=0\,.
\end{equation}

We put $V_{\vec{k}}:=H^{0}(\Vk|_{\li})$.  We can   define the morphism
\begin{equation*}
\begin{aligned}
\tilde{\jmath}_{m}\colon\End(\Com^{c})^{\oplus 2}\oplus\Hom(\Com^{c},\Com)&\lra \Hom(\Uk,\Vk)\oplus\Hom(\Vk,\Wk)\oplus\Hom(\Com^{r},V_{\vec{k}})\\
(b_{1},b_{2},e)&\longmapsto (\alpha,\beta,\xi)\,,
\end{aligned}
\end{equation*}
where
$$\alpha=
\begin{pmatrix}
\bm{1}_c(y_{2m}^{n}s_{E})+{^tb_{2}}s_\infty\\
\bm{1}_cy_{1m}+{^tb_{1}}y_{2m}\\
0
\end{pmatrix}\,,$$
$$\beta=
\begin{pmatrix}
\bm{1}_cy_{1m}+{^tb_1}y_{2m},&
-\left(\bm{1}_c(y_{2m}^{n}s_{E})+{^tb_2}s_\infty\right),&
{^t}e\,s_\infty
\end{pmatrix}\,,
$$
$$\xi=
\begin{pmatrix}
0\\ \vdots \\ 0 \\
1
\end{pmatrix}\,,
$$
and $j_{m}$ is the restriction of $\tilde{\jmath}_{m}$ to $\T(c)$.

 \noindent {\em Proof of Lemma \ref{lmclimm}.} 
It is quite  clear that $\tilde{\jmath}_{m}$ is a closed immersion,  so that it is enough to prove that
\begin{equation*}
\im\tilde{\jmath}_{m}\cap P_{\vec{k},m}=\im j_{m}\,.
\end{equation*}

Let $(\alpha,\beta,\xi)=\tilde{\jmath}_{m}(b_{1},b_{2},e)$ be a point in the intersection $\im\tilde{\jmath}_{m}\cap P_{\vec{k},m}$;
the equation $\beta\circ\alpha=0$ implies that the triple $(b_{1},b_{2},e)$ satisfies condition (T1), while the fact that $\beta\otimes k(x)$ has maximal rank for all $x\in\Sigma_{n}$ entails condition (T2). It follows that
\begin{equation*}
\im\tilde{\jmath}_{m}\cap P_{\vec{k},m}\subseteq\im j_{m}\,.
\end{equation*}
To get the opposite inclusion, note that for all $(\alpha,\beta,\xi)\in\im\tilde{\jmath}_{m}$:
\begin{itemize}
\item[({i})] the morphism $\alpha\otimes k(x)$ fails to have maximal rank at most at a finite number of points $x\in\Sigma_{n}$; hence, $\alpha$ is injective as a sheaf morphism;
\item[({ii})] the morphisms $\alpha\otimes k(x)$ and $\beta\otimes k(x)$ have maximal rank for all points $x\in\li\cup F_{m}$;
\item[({iii})] the natural morphism $\Phi\colon H^0((\coker \alpha)_{\vert \ell_\infty}(-1))
\to H^0(\mathcal W_{\vec{k}}\vert_{\ell_\infty}(-1))$ is invertible;
\item[({iv})] $\beta_{1}|_{F_{m}}=\bm{1}_{c}$, where $\beta_{1}\colon\On(1,-1)^{\oplus k_2}\lra\On(1,0)^{\oplus k_{3}}$ is the first component of $\beta$;
\item[({v})] the morphism $\xi$ has maximal rank.
\end{itemize}
If $(\alpha,\beta,\xi)\in\im j_{m}$, condition (T2) implies that $\beta\otimes k(x)$ has maximal rank for all $x\in\Sigma_{n}\setminus(\li\cup F_{m})$; by  ({ii}) this is enough to ensure that $\beta$ is surjective. Condition (T1) implies   $\beta\circ\alpha=0$, so that we can define $\E=\ker\beta/\im\alpha$. By   ({i}) $\E$ is torsion free, by   ({ii}) and ({iii}) it is trivial at infinity, and by  ({iv}) $\E|_{F_{m}}$ is trivial as well.
The $\GL(c, \Com)$-equivariance of $j_{m}$ is readily checked.\qed

\subsection{A technical Lemma}  The next Lemma and Corollary will be used to  show that  $j_{m}$ induces an isomorphism between the quotients of $\T(c)$ and $P_{\vec{k},m}$ under the actions of $\GL(c, \Com)$ and $\Gk$, respectively (cf.~eq.~\eqref{eqeta}). Let  $X$ be a smooth algebraic variety over $\Com$ with an action
 {$\gamma\colon X\times G \to  X$}
 of a complex affine algebraic group $G$. The set-theoretical quotient $X/G$ has a natural structure of ringed space induced by the quotient map $q\colon X\lra X/G$.
 If the action is free, and    { the graph  morphism $X\times G\to X\times X$ 
is a closed immersion, }  $X/G$ is a smooth algebraic variety, the pair $(X/G,q)$ is a geometric quotient of $X$ modulo $G$, and
$X$ is a (locally isotrivial) principal $G$-bundle over $X/G$.
This can be proved by arguing as in the proof of  \cite[Theorem.~5.1]{bbr}.

Let $Y$ be a smooth closed subvariety of $X$ and let $H\stackrel{\iota}{\hookrightarrow}G$ be a closed subgroup of $G$. Assume that $H$ acts on $Y$ and that the inclusion $j\colon Y \hookrightarrow X$ is $H$-equivariant.   We consider the quotient
 $p\colon Y\lra Y/H$ as a ringed space with the quotient topology, and structure sheaf given by the sheaf of invariant functions.
 \begin{lemma}
 \label{lemmaquot}
 If
the intersection of $\im j$ with every $G$-orbit in $X$ is nonempty, and 
for all $G$-orbits $O_{G}$ in $X$, one has $\Stab_{G}(O_{G}\cap\im j)=\im\iota$,
then $j$ induces an isomorphism $\bar{\jmath}\colon Y/H \longrightarrow X/G$ of algebraic varieties.
\end{lemma}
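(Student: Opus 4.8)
The plan is to realise $\bar\jmath$ as the descent of $q\circ j$ and then to promote a set-theoretic bijection to an isomorphism of varieties by means of Zariski's main theorem. First I would observe that, since $q\colon X\to X/G$ is $G$-invariant and $j$ is $H$-equivariant through $\iota$, the composite $q\circ j\colon Y\to X/G$ is constant on $H$-orbits; hence it factors through $p\colon Y\to Y/H$, yielding a continuous map $\bar\jmath\colon Y/H\to X/G$. Moreover the pullback along $j$ of a $G$-invariant regular function is an $H$-invariant regular function, so $\bar\jmath$ is a morphism of ringed spaces. I would also note that the criterion just recalled, which endows $X/G$ with the structure of a smooth variety and a principal $G$-bundle, applies verbatim to the pair $(Y,H)$: indeed $H$ is a closed subgroup of the affine group $G$, it acts freely on the smooth variety $Y$, and the graph morphism $Y\times H\to Y\times Y$ is a closed immersion, being the restriction of the corresponding closed immersion for $(X,G)$. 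Thus $Y/H$ is itself a smooth algebraic variety, $p$ is a principal $H$-bundle, and $\bar\jmath$ is a morphism of smooth varieties.

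Next I would prove that $\bar\jmath$ is bijective. Surjectivity is immediate from the first hypothesis: every $G$-orbit $O_G$, that is, every point of $X/G$, meets $\im j$, hence lies in the image of $q\circ j$. For injectivity, suppose $p(y_1)$ and $p(y_2)$ have the same image, so that $j(y_1)$ and $j(y_2)$ lie in a common orbit $O_G$; both then belong to the nonempty set $O_G\cap\im j$, on which $\im\iota\cong H$ acts. The second hypothesis, that the stabiliser of $O_G\cap\im j$ in $G$ is exactly $\im\iota$, guarantees that this intersection is a single $\im\iota$-orbit, whence $y_1$ and $y_2$ lie in the same $H$-orbit and $p(y_1)=p(y_2)$. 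Since $j$ is a closed immersion, it is injective, so no identifications are lost and $\bar\jmath$ is injective.

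Finally, I would upgrade the bijection to an isomorphism. Both $Y/H$ and $X/G$ are smooth, hence normal, varieties over $\Com$, and $\bar\jmath$ is a bijective morphism between them. Having finite (singleton) fibres forces $\dim(Y/H)=\dim(X/G)$, and in characteristic zero a dominant generically finite morphism of varieties of the same dimension has generic fibre of cardinality equal to its degree; a single point in each fibre therefore makes $\bar\jmath$ birational onto each component of its target. A birational quasi-finite morphism onto a normal variety is, by Zariski's main theorem, an open immersion, and being surjective it is an isomorphism. This simultaneously identifies $Y/H$ with $X/G$ and confirms that $Y/H$ is a smooth variety.

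I expect the genuine difficulty to be concentrated in the injectivity step, and specifically in extracting from the stabiliser hypothesis the fact that each orbit meets $\im j$ in a single $H$-orbit; the surrounding formal steps --- the descent of the morphism, the applicability of the geometric-quotient machinery to $(Y,H)$, and the closing Zariski's-main-theorem argument --- are routine once the smoothness of both quotients is in hand. A secondary point demanding care is the verification that the graph morphism for $(Y,H)$ is a closed immersion, since it is precisely this that makes $Y/H$ a genuine smooth variety rather than a mere ringed space, and hence makes the normality hypothesis of Zariski's main theorem available.
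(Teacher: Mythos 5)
Your argument is correct, but it reaches the conclusion by a genuinely different route from the paper. The paper's proof is local and algebraic: using that $q$ is an affine morphism and that $j$ is affine, it covers $X/G$ by affine opens $U=\Spec(A^{G})$ with $q^{-1}(U)=\Spec A$ and $p(j^{-1}(q^{-1}(U)))=\Spec(B^{H})$, and shows directly that the induced ring map $A^{G}\to B^{H}$ is an isomorphism; the two orbit hypotheses enter there as injectivity (a $G$-invariant function vanishing on $\im j$ vanishes everywhere, since every orbit meets $\im j$) and surjectivity (an $H$-invariant function extends $G$-invariantly, since each $O_{G}\cap\im j$ is a single $\im\iota$-orbit). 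You instead extract set-theoretic bijectivity from the same two hypotheses and then invoke Zariski's main theorem in characteristic zero to upgrade a bijective morphism of smooth, hence normal, varieties to an isomorphism. Your route cleanly separates the set-theoretic content from the scheme-theoretic upgrade, at the price of having to know in advance that $Y/H$ is a variety --- which you correctly obtain by rerunning the free-action-plus-closed-graph criterion for $(Y,H)$, freeness of $H$ on $Y$ being inherited from that of $G$ on $X$ via the injections $\iota$ and $j$. The paper's route instead produces the variety structure on $Y/H$ as a byproduct of the isomorphism of ringed spaces.

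One point deserves emphasis, and it affects both proofs equally: the hypothesis $\Stab_{G}(O_{G}\cap\im j)=\im\iota$, read literally as a statement about the setwise stabilizer, does not by itself imply that $O_{G}\cap\im j$ is a single $\im\iota$-orbit (take $G=\Z$ acting on itself by translation, $\im\iota=\{0\}$, and a two-element subset: its setwise stabilizer is $\{0\}$, yet it is not a single orbit). What is actually verified in Lemma \ref{lemmaquot2}, and what both your injectivity step and the paper's surjectivity of $A^{G}\to B^{H}$ genuinely require, is the transporter statement: any $g\in G$ carrying one point of $O_{G}\cap\im j$ to another lies in $\im\iota$. You correctly identified this as the crux; with that (intended) reading of the hypothesis your injectivity argument closes, and the rest of your proof is sound.
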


\begin{proof}

By \cite[Prop.~0.7]{Mum}   the morphism $q$ is affine. If  $U\subset X/G$ is an open affine subset, then $V=q^{-1}(U)$ is   affine, $V=\Spec A$, so that $U= \Spec(A^{G})$, and the restricted morphism $q|_{V}$ is induced by the canonical injection
 $q^{\sharp}\colon A ^{G}\hookrightarrow A$. 
Since $j$ is an affine morphism \cite[Prop.~1.6.2.(i)]{EGA2}, the counterimage $W=j^{-1}(V)$ is affine, $W=\Spec B$, and by the equivariance of $j$ it is $H$-invariant. It follows that its image $p(W)=\Spec(B^{H})$ is affine, and the restricted morphism $p|_{W}$ is induced by the canonical injection $p^{\sharp}\colon B^{H}\hookrightarrow B$. 
Let $j^{\sharp}\colon A \to B $ be the homomorphism associated with $j$.  One can prove that $\im\left(j^{\sharp}\circ q^{\sharp}\right)\subseteq A^{G}$, and that this composition is an isomorphism, which induces $\bar{\jmath}_{\vert V}$. Thus $\bar\jmath$ is an isomorphism locally on the target, hence it is   an isomorphism.
\end{proof}

\begin{cor}
The morphism $p\colon Y\to Y/H$ is an $H$-principal bundle, and is a reduction of the structure group of $q\colon X\lra X/G$.
If  $X \to X/G$  is locally trivial, the same is true for $Y\to Y/H$.
\end{cor}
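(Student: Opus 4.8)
The plan is to deduce all three assertions from Lemma~\ref{lemmaquot}, together with the general principle recalled just before its statement, namely that a free action of an affine algebraic group whose graph morphism is a closed immersion produces a locally isotrivial principal bundle over a smooth quotient (proved as in \cite[Theorem 5.1]{bbr}). I would keep the standing hypotheses of Lemma~\ref{lemmaquot} in force and treat the three claims in turn: that $p$ is a principal $H$-bundle, that it is a reduction of the structure group of $q$, and that local triviality descends from $X$ to $Y$.

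First I would show that $p\colon Y\to Y/H$ is a principal $H$-bundle by applying the general principle to the pair $(Y,H)$. The $H$-action on $Y$ is, through $\iota$ and the $H$-equivariant closed immersion $j$, the restriction of the $G$-action on $X$; since $G$ acts freely, so does $H$. To see that the graph morphism $\Gamma_Y\colon Y\times H\to Y\times Y$ is a closed immersion, I would use that, by equivariance of $j$,
\[
(j\times j)\circ\Gamma_Y=\Gamma_X\circ(j\times\iota),
\]
where $\Gamma_X\colon X\times G\to X\times X$ is the graph morphism of the $G$-action. The right-hand side is a composite of closed immersions (here $j$ and $\iota$ are closed immersions and $\Gamma_X$ is a closed immersion by hypothesis), hence a closed immersion; since $j\times j$ is a closed immersion, $\Gamma_Y$ is one as well. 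As $Y$ is smooth, the general principle yields that $Y/H$ is a smooth variety and that $p$ is a locally isotrivial principal $H$-bundle, this geometric quotient agreeing with the ringed-space quotient used in Lemma~\ref{lemmaquot}.

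Second, for the reduction of structure group, I would exploit that by Lemma~\ref{lemmaquot} the map $\bar{\jmath}\colon Y/H\to X/G$ is an isomorphism, so that one may identify a single base $B:=Y/H\simeq X/G$; the relation $q\circ j=\bar{\jmath}\circ p$ then exhibits $j$ as an $H$-equivariant $B$-morphism. I would form the associated principal $G$-bundle $Y\times^{H}G\to B$ and the canonical morphism
\[
\Theta\colon Y\times^{H}G\longrightarrow X,\qquad [y,g]\longmapsto j(y)\cdot g,
\]
which is well defined, covers $\id_B$, and is $G$-equivariant. Since any $G$-equivariant morphism between principal $G$-bundles over the same base is automatically an isomorphism, $\Theta$ is an isomorphism, which is exactly the statement that $p$ is a reduction of the structure group of $q$ from $G$ to $H$. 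Note that the orbit- and stabilizer-theoretic hypotheses of Lemma~\ref{lemmaquot} are not invoked again here, having already been consumed in proving that $\bar{\jmath}$ is an isomorphism.

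Finally, for local triviality, if $X\to B$ is trivial over an open $U$, say $X|_U\simeq U\times G$, then the reduction determines the section of $X/\iota(H)\to B$ cut out by $\im j$, i.e.\ a classifying morphism $\psi\colon U\to G/H$, and $Y|_U\simeq\psi^{\ast}\bigl(G\to G/H\bigr)$; a local section of the principal $H$-bundle $G\to G/H$ then trivializes $Y$ over a suitable open set. I expect this last step to be the only genuinely delicate point. Its Zariski form requires Zariski-local sections of $G\to G/H$, which exist when $H$ is a \emph{special} group, in particular for $H=\GL(c,\Com)$ — the case relevant to this paper — whereas \'etale-local sections exist unconditionally, giving the locally isotrivial version in general.
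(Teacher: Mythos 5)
Your proof is correct and fills in exactly the argument the paper leaves implicit: the corollary is stated there without proof, as an immediate consequence of Lemma \ref{lemmaquot} together with the general quotient principle recalled before it, and your three steps (the graph-morphism factorization $(j\times j)\circ\Gamma_Y=\Gamma_X\circ(j\times\iota)$ giving the principal $H$-bundle, the associated-bundle isomorphism $Y\times^{H}G\simeq X$ giving the reduction, and pullback along the classifying map $\psi\colon U\to G/H$ giving local triviality) are the natural way to make that precise. Your remark that the last assertion, in its Zariski form, needs local sections of $G\to G/H$ --- automatic in the paper's application since $H=\GL(c,\Com)$ is special --- is a genuine refinement of the statement rather than a gap.
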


\subsection{Conclusion}

\begin{lemma}\label{lemmaquot2}
For any $\Gk$-orbit $O$ in $P_{\vec{k},m}$, the intersection $O\cap\im j_{m}$ is not empty and  its stabilizer  in $\Gk$ coincides with $\im\iota$
(the   morphism $\iota$  was defined in eq.\ \eqref{eqiota}).
\end{lemma}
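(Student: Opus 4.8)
The plan is to prove the two claims in turn: that $O\cap\im j_m$ is nonempty for every $\Gk$-orbit $O\subset P_{\vec{k},m}$, and that its setwise stabilizer in $\Gk$ equals $\im\iota$. For the stabilizer I would isolate the sharper statement that \emph{every} $g\in\Gk$ carrying one point of $\im j_m$ into another must itself lie in $\im\iota$; granting this, nonemptiness shows that $O\cap\im j_m$ is a single $\im\iota$-orbit, and the freeness of the $\Gk$-action on $\Pk$ recalled in Section~\ref{background} then upgrades this to the desired stabilizer.

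For nonemptiness I would argue sheaf-theoretically. Under the identification $\Pk/\Gk\simeq\M^n(1,0,c)$ of \cite[Theorem~3.4]{bbr}, the orbit $O$ corresponds to a framed sheaf $(\E,\theta)$ trivial on $F_m$, hence to a point of $U^{n,c}_m\simeq\Hilb^c(\Com^2)$ and thereby to a $\GL(c,\Com)$-orbit of triples $(b_1,b_2,e)\in\T(c)$ through Nakajima's description. By Lemma~\ref{lmclimm}, $j_m(b_1,b_2,e)$ is itself a point of $P_{\vec{k},m}$ whose cohomology is a framed sheaf trivial on $F_m$ and $\li$; once one checks that on the affine chart $X_n\setminus F_m\simeq\Com^2$ this cohomology carries precisely the Nakajima data $(b_1,b_2,e)$, it is isomorphic to $(\E,\theta)$. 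Since the monads representing a fixed framed sheaf form a single $\Gk$-orbit (again by \cite[Theorem~3.4]{bbr}), $j_m(b_1,b_2,e)$ lies in $O$, so $O\cap\im j_m\neq\emptyset$.

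For the sharper statement I would take normal forms $j_m(b_1,b_2,e)$ and $j_m(b_1',b_2',e')$ and an element $g=(g_1,g_2,g_3)\in\Gk$ satisfying $g_2\alpha=\alpha'g_1$, $g_3\beta=\beta'g_2$ and $g_2\xi=\xi'$, and solve these relations using the explicit shapes of $\alpha,\beta,\xi$ from Lemma~\ref{lmclimm}. Writing $g_2$ in block form along $\Vk=\On(1,-1)^{\oplus c}\oplus\On^{\oplus c}\oplus\On$, the vanishing $\Hom(\On(1,-1),\On)=H^0(\On(-1,1))=0$ annihilates the blocks carrying the $\On(1,-1)^{\oplus c}$-summand into the two $\On$-summands, while the framing condition $g_2\xi=\xi'$ pins down the part of $g_2$ acting on the last summand. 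Expanding the identities in the bases $\{y_{2m}^q y_{1m}^{n-q}s_E\}_q\cup\{s_\infty\}$ of $H^0(\On(1,0))$ and $\{y_{1m},y_{2m}\}$ of $H^0(\On(0,1))$ and matching coefficients, a short induction on $q$ forces every remaining off-diagonal block of $g_2$ to vanish and makes $g_1$, $g_3$ and both $c\times c$ diagonal blocks of $g_2$ equal to a single matrix $\psi\in\GL(c,\Com)$; the surviving relations read $b_i'=\phi b_i\phi^{-1}$, $e'=e\phi^{-1}$ with $\phi={}^t\psi^{-1}$. Thus $g=\iota(\phi)\in\im\iota$, as claimed.

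Combining the pieces, if $g$ preserves the set $O\cap\im j_m=\im\iota\cdot t_0$ then $g t_0=\iota(\phi)t_0$ for some $\phi$, so $\iota(\phi)^{-1}g$ fixes the point $t_0\in\Pk$; freeness of the $\Gk$-action gives $\iota(\phi)^{-1}g=1$, whence $g\in\im\iota$, and the reverse inclusion is immediate. I expect the genuine difficulty to sit in the nonemptiness step, precisely in verifying that the cohomology of $j_m(b_1,b_2,e)$ reproduces the Nakajima data $(b_1,b_2,e)$ on $\Com^2$ — this is where the monad normal form must be matched against the ADHM construction for $\Hilb^c(\Com^2)$. A self-contained alternative would normalize an arbitrary monad of $P_{\vec{k},m}$ directly to the form of Lemma~\ref{lmclimm}, but there the obstacle merely shifts to controlling the non-reductive automorphism group $\Aut(\Vk)$, whose off-diagonal blocks are governed by $H^0(\On(1,-1))$. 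By contrast, the computation of the previous paragraph, while lengthy, is routine coefficient bookkeeping once the block structure and the vanishing $\Hom$-spaces are in hand.
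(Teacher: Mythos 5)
Your reduction of the stabilizer claim to the sharper statement that any $g\in\Gk$ carrying a point of $\im j_{m}$ to a point of $\im j_{m}$ must lie in $\im\iota$, followed by the block-matrix computation in $\Aut(\Vk)$ using the vanishing $\Hom(\On(1,-1),\On)=0$ and coefficient matching, is sound, and is essentially the computation the paper performs in a different packaging: the paper normalizes an arbitrary point of $P_{\vec{k},m}$ in four successive steps (first $\beta_{10}^{(m)}=\bm{1}_c$ and $\beta_{2q}^{(m)}=0$ for $q<n$, then $\alpha_{1n}^{(m)}=\bm{1}_c$, then $\alpha_{20}^{(m)}$ and $\beta_{2n}^{(m)}$, then $\xi$), shrinking the stabilizer from $\Gk$ down to $G_4=\im\iota$ along the way. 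Your use of the freeness of the $\Gk$-action on $\Pk$ to pass from ``the transporter of $\im j_m$ into itself is contained in $\im\iota$'' to the statement about the setwise stabilizer of $O\cap\im j_m$ is also correct.

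The genuine gap sits exactly where you place it, and it is not filled. For nonemptiness you attach, via Nakajima's theorem, a triple $(b_1,b_2,e)\in\T(c)$ to the restriction of $\E$ to $X_n\setminus F_m\simeq\Com^2$, and you then need that the cohomology of the monad $j_{m}(b_1,b_2,e)$, restricted to that chart, is the ideal sheaf that Nakajima's construction produces from $(b_1,b_2,e)$ --- otherwise there is no reason for $j_{m}(b_1,b_2,e)$ to lie in $O$. This is not routine bookkeeping: the monad of Lemma \ref{lmclimm} lives on $\Sigma_n$ and is written in terms of $s_E$, $s_\infty$ and the coordinates $y_{1m},y_{2m}$, and matching its cohomology against the ADHM description on $\Com^2$ requires an explicit trivialization and a computation comparable in length to the proof it is meant to replace. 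The paper sidesteps this entirely: nonemptiness is obtained by the direct normalization described above, and its one nontrivial input is that the fully normalized locus $Z$ coincides with $\im j_{m}$, which reduces to showing that the extra entry $e_2$ vanishes by adapting the argument of \cite[Prop.~2.8(1)]{Nakabook} to co-stable rank-one data. In your scheme that same difficulty reappears, undischarged, in the matching step; until it is carried out, the nonemptiness half of the Lemma is not proved.
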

\begin{proof}
Let $(\alpha,\beta,\xi)\in\Pk$ be any point and let $O$ be its $\Gk$-orbit. We call $\E$ the cohomology of $M(\alpha,\beta)$. One can verify that the condition $\E_{|F_{m}}\simeq \Ol_{F_{m}}$ is equivalent to the condition $\det(\beta_{10}^{(m)})\neq 0$ (see eq. \eqref{eqab}). By acting with $\Gk$, 
one can   find a point in $O$ such that
\begin{equation}
\left\{
\begin{aligned}
\beta_{10}^{(m)}&=\bm{1}_{c}\\
\beta_{2q}^{(m)}&=0\qquad q=0,\dots,n-1\,.
\end{aligned}
\right.
\label{eqImj1}
\end{equation}
We call $O_{1}$ the subvariety cut by these equations  inside $O$. The stabilizer of $O_{1}$ inside $\Gk$ is the closed subgroup $G_{1}$ characterized by the conditions $\psi_{11}=\chi$ and $\psi_{12}=0$.
 
We put $b_1:={}^{t}\beta_{11}^{(m)}$.

For all points in $O_{1}$ the equation $\beta\circ\alpha=0$ implies  
\begin{equation}\label{eq3}
\begin{cases}
 \alpha_{1q}^{(m)}=0 & q=0,\dots,n-1\\
 \alpha_{1n}^{(m)}=-\beta_{2n}^{(m)}\alpha_{20}^{(m)}\,.
\end{cases}
\end{equation}
In particular, for all points in $O_{1}$, the invertibility of $\Phi$ is equivalent to the condition $\det(\alpha_{1n}^{(m)})\neq0$, and by acting with $G_{1}$ we can find a point in $O_{1}$ such that
\begin{equation}
\alpha_{1n}^{(m)}=\bm{1}_c\,.
\label{eqImj2}
\end{equation}
We call $O_{2}$ the subvariety cut by this equation  inside $O_{1}$ . The stabilizer of $O_{2}$ in $G_{1}$ is the closed subgroup $G_{2}$ characterized by the condition $\chi=\phi$.

From eq.~\eqref{eq3} we deduce that $\rk\beta_{2n}^{(m)}=\rk\alpha_{20}^{(m)}=c$. By acting with $G_{2}$ we can find a point in $O_{2}$ such that
\begin{equation}
\alpha_{20}^{(m)}=
\begin{pmatrix}
\bm{1}_c\\
0
\end{pmatrix}\qquad,\qquad\beta_{2n}^{(m)}=
\begin{pmatrix}
-\bm{1}_c & 0
\end{pmatrix}
\label{eqImj3}
\end{equation}
We call $O_{3}$ the subvariety cut  by these equations inside $O_{2}$. The stabilizer of $O_{3}$ in $G_{2}$ is the closed subgroup $G_{3}$ characterized by the condition $\psi_{22}=
\left(\begin{smallmatrix}
\phi & 0\\
0 & \lambda
\end{smallmatrix}\right)$ for some $\lambda\in\Com^{*}$.

For all points in $O_{3}$ the equation $H^{0}\left(\beta|_{\li}\right)\circ\xi=0$ implies   ${}^{t}\xi=(0,\dots,0,\omega)$ for some $\omega\in\Com^{*}$.
By acting with $G_{3}$ we can find a point in $O_{3}$ such that
\begin{equation}
{}^{t}\xi=(0,\dots,0,1)
\label{eqImj4}
\end{equation}
We call $O_{4}$ the subvariety cut by this equation  inside $O_{3}$. The stabilizer of $O_{4}$ in $G_{3}$ is the closed subgroup $G_{4}$ characterized by the condition $\psi_{22}=
\left(\begin{smallmatrix}
\phi & 0\\
0 & 1
\end{smallmatrix}\right)$.  One can see that $G_{4}=\im\iota$. To get the thesis we have to prove that the subvariety $Z$ cut in $P_{\vec{k},m}$ by eqs. \eqref{eqImj1}, \eqref{eqImj2}, \eqref{eqImj3} and \eqref{eqImj4} coincides with $\im j_{m}$. For all points in $Z$ the equation $\beta\circ\alpha=0$ implies  
\begin{gather}
\alpha_{21}^{(m)}=
\begin{pmatrix}
{}^{t}b_1\\
{}^{t}e_2
\end{pmatrix}
\qquad,\qquad\beta_{2,n+1}=
\begin{pmatrix}
-\alpha_{1,n+1} & {}^{t}e
\end{pmatrix}\\
\text{and}\qquad [{}^{t}\alpha_{1,n+1},b_1]+e_2e=0\,,
\end{gather}
for some $e\in\Hom(\Com^{c},\Com)$ and $e_{2}\in\Hom(\Com,\Com^{c})$. We put $b_{2}={}^{t}\alpha_{1,n+1}$.  {By carefully adapting the arguments of the proof of \cite[Prop.~2.8.(1)]{Nakabook} for co-stable ADHM data,} one gets $e_{2}=0$. The equality $Z=\im j_{m}$ follows.
\end{proof}

So, as we anticipated, we have:

\begin{prop}
The morphism $j_m$ induces an isomorphism $ \eta_m\colon\T(c)/\GL(c, \Com)\longrightarrow P_{\vec{k},m}/\Gk\simeq U^{n,c}_m$.
\end{prop}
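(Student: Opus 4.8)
The plan is to obtain the Proposition as a formal consequence of the general quotient criterion of Lemma \ref{lemmaquot}, feeding into it the orbit-and-stabilizer computation carried out in Lemma \ref{lemmaquot2}. First I would match the abstract data of Lemma \ref{lemmaquot} to the present situation by taking $X=\Pkm$ with its $\Gk$-action, $Y=\T(c)$ realized as a closed subvariety of $X$ through the closed immersion $j_m$ of Lemma \ref{lmclimm}, $G=\Gk$, and $H=\GL(c,\Com)$ embedded in $\Gk$ via the homomorphism $\iota$ of eq.~\eqref{eqiota}. The structural hypotheses of Lemma \ref{lemmaquot} are then readily verified: $\Pkm$ is smooth, being open in $\Pk$; the $\Gk$-action on $\Pk$ is free with closed-immersion graph, so by the discussion preceding Lemma \ref{lemmaquot} it admits a geometric quotient which is a locally isotrivial principal bundle, and the same structure restricts to the open set $\Pkm$; the variety $\T(c)$ is smooth; $\iota$ exhibits $\GL(c,\Com)$ as a closed subgroup of $\Gk$; and $j_m$ is $\GL(c,\Com)$-equivariant by Lemma \ref{lmclimm}.

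With the framework in place, I would invoke Lemma \ref{lemmaquot2}, which supplies exactly the two nontrivial hypotheses required by Lemma \ref{lemmaquot}: every $\Gk$-orbit $O$ in $\Pkm$ meets $\im j_m$, and for each such orbit $\Stab_{\Gk}(O\cap\im j_m)=\im\iota$. Lemma \ref{lemmaquot} then produces an isomorphism of algebraic varieties
\[
\bar\jmath\colon\T(c)/\GL(c,\Com)\stackrel{\sim}{\longrightarrow}\Pkm/\Gk,
\]
which is precisely the map $\eta_m$.

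It remains to identify the target. Since $\Pk/\Gk\simeq\M^n(1,0,c)\simeq\Hilb^c(X_n)$ by \cite[Theorem 3.4]{bbr} and eq.~\eqref{eqMH}, and $\Pkm$ is by construction the inverse image of $U^{n,c}_m$ under the quotient map (Lemma \ref{lmclimm}), one has $\Pkm/\Gk\simeq U^{n,c}_m$, whence $\eta_m\colon\T(c)/\GL(c,\Com)\stackrel{\sim}{\longrightarrow}U^{n,c}_m$. Because essentially all of the work has already been absorbed into Lemmas \ref{lemmaquot} and \ref{lemmaquot2}, I do not expect a genuine obstacle here; the only point deserving a moment's care is the transfer of the freeness and the principal-bundle structure of the $\Gk$-action from $\Pk$ to the open subset $\Pkm$, which is what legitimizes the verbatim application of the general quotient machinery.
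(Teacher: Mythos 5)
Your proposal is correct and follows exactly the paper's own route: the Proposition is obtained by applying the general quotient criterion of Lemma \ref{lemmaquot} with $X=\Pkm$, $Y=\T(c)$, $G=\Gk$, $H=\GL(c,\Com)$ embedded via $\iota$, with the two substantive hypotheses supplied by Lemma \ref{lemmaquot2}. Your additional checks (smoothness, restriction of the principal-bundle structure to the open set $\Pkm$, and the identification $\Pkm/\Gk\simeq U^{n,c}_m$) are sound elaborations of what the paper leaves implicit.
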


\begin{proof} This is proved by Lemma \ref{lemmaquot}, whose hypotheses are satisfied in view of Lemma \ref{lemmaquot2}.
\end{proof}

As a further step in the proof of Proposition \ref{pro1}, we introduce the open subsets
\begin{equation}\label{tcml}
\T(c)_{m,l}=j_{m}^{-1}\left(\im j_{m}\cap P_{\vec{k},l}\right)\qquad\text{for}\quad m,l=0,\dots,c\,.
\end{equation}
It is not difficult to see that 
\begin{equation}\label{eqtcml}
\T(c)_{m,l}=\left\{(b_{1},b_{2},e)\in \T(c)\left| \det\left(c_{m-l}\bm{1}_c-s_{m-l}b_{1}\right)\neq0\right.\right\}\,.
\end{equation}

To conclude our reasoning we need one more Lemma.
\begin{lemma}
For any $l,m=0,\dots,c$ and for any point $\vec{b}_{m}=(b_{1m},b_{2m},e_{m})\in \T(c)_{m,l}$, there exists a unique element $\psi_{l}(\vec{b}_{m})=(\phi,\psi,\chi)\in\Gk$ such that
 $\chi=\bm{1}_{c}$,
and 
the point $(\alpha',\beta',\xi')=\psi_{l}(\vec{b}_{m})\cdot j_{m}(\vec{b}_{m})$ lies in the image of $j_{l}$.
If we set \[(b_{1l},b_{2l},e_{l})=j_{l}^{-1}(\alpha',\beta',\xi')\,,\] we have
\begin{equation}
\left\{
\begin{aligned}
b_{1l}&=\left(c_{m-l}\bm{1}_c-s_{m-l}b_{1m}\right)^{-1}\left(s_{m-l}\bm{1}_c+c_{m-l}b_{1m}\right)\\
b_{2l}&=\left(c_{m-l}\bm{1}_c-s_{m-l}b_{1m}\right)^{n}b_{2m}\\
e_{l}&=e_{m}\,.
\end{aligned}
\right.
\label{eqbbel}
\end{equation}
\end{lemma}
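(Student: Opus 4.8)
The plan is to separate the statement into two parts—the existence and uniqueness of the normalizing element $(\phi,\psi,\chi)$ with $\chi=\bm{1}_c$, and the explicit transition formulas \eqref{eqbbel}—and to obtain the first part essentially formally from Lemma \ref{lemmaquot2}, extracting the second by a direct coordinate computation on the monad.

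For existence and uniqueness I would rely entirely on Lemma \ref{lemmaquot2}. Applied at the index $l$, it says that the $\Gk$-orbit $O$ of $j_m(\vec{b}_m)$ meets $\im j_l$ and that the stabilizer of this intersection is exactly $\im\iota$; since $\im j_l$ is a $\GL(c,\Com)$-stable slice meeting $O$ in a single $\im\iota$-orbit, the set of $g\in\Gk$ with $g\cdot j_m(\vec{b}_m)\in\im j_l$ is a single coset of $\im\iota$. Writing one such element as $g_0=(\phi_0,\psi_0,\chi_0)$ and recalling from \eqref{eqiota} that the $\Aut(\Wk)$-component of $\iota(\phi)$ is ${}^t\phi^{-1}$, the requirement $\chi=\bm{1}_c$ becomes ${}^t\phi^{-1}\chi_0=\bm{1}_c$, which determines $\phi$ uniquely. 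As $\Gk$ acts freely on $\Pk$, this singles out the unique $\psi_l(\vec{b}_m)$.

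To read off the formulas I would rewrite $j_m(\vec{b}_m)=(\alpha,\beta,\xi)$ in the coordinates $y_{1l},y_{2l}$. The rotation relating the two coordinate systems (obtained from \eqref{eqcmsm}, equivalently the $\sigma^{1}_{m-l}$ matrix of \eqref{eqsigma}) gives $y_{1m}=c_{m-l}y_{1l}+s_{m-l}y_{2l}$ and $y_{2m}=-s_{m-l}y_{1l}+c_{m-l}y_{2l}$. The degree-one block $\bm{1}_cy_{1m}+{}^tb_{1m}y_{2m}$ occurring in both $\alpha$ and $\beta$ then becomes $My_{1l}+Ny_{2l}$, where $M=c_{m-l}\bm{1}_c-s_{m-l}{}^tb_{1m}$ and $N=s_{m-l}\bm{1}_c+c_{m-l}{}^tb_{1m}$, and $M$ is invertible precisely because $\vec{b}_m\in\T(c)_{m,l}$ by \eqref{eqtcml}. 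Since $\Hom(\On(1,-1),\On)=0$, the element $\psi$ is block upper triangular, so with $\chi=\bm{1}_c$ the first block of $\beta$ transforms into $(My_{1l}+Ny_{2l})\psi_{11}^{-1}$; taking $\psi_{11}=M$ restores the leading coefficient $\bm{1}_c$ and yields ${}^tb_{1l}=NM^{-1}$. Transposing and using that $M$ and $N$ are polynomials in ${}^tb_{1m}$, hence commute, gives exactly the first line of \eqref{eqbbel}. The top block $\bm{1}_c(y_{2m}^{n}s_E)$ expands through \eqref{eqsigma} into a combination of the sections $y_{1l}^{q}y_{2l}^{n-q}s_E$; the off-diagonal entries $\psi_{12}$, valued in $H^0(\On(1,-1))$, are chosen to cancel all but the top term $y_{2l}^{n}s_E$, while $\phi$ and $\psi_{22}$ normalize the two leading coefficients to $\bm{1}_c$. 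Tracking the coefficient of $s_\infty$ through these normalizations yields the second line of \eqref{eqbbel}, the $n$-th power of $c_{m-l}\bm{1}_c-s_{m-l}b_{1m}$ reflecting that $s_E$ twists the fibre-degree-$n$ section $y_{2m}^{n}s_E$, which therefore rescales by the $n$-th power of the degree-one rotation factor; and since $s_\infty$ and the framing $\xi$ are untouched by the change of fibre coordinates, one gets $e_l=e_m$.

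The step I expect to be the main obstacle is the control of the off-diagonal block $\psi_{12}$ and the verification that the assembled triple is a genuine automorphism of $\Vk$. Here I would avoid exhibiting $\psi_{12}$ explicitly: since Lemma \ref{lemmaquot2} already guarantees that a normalizing $g\in\Gk$ carrying $j_m(\vec{b}_m)$ into $\im j_l$ exists, it is enough to determine the diagonal data $(b_{1l},b_{2l},e_l)$ by comparing leading and $s_\infty$-coefficients, the cancellation of the intermediate $y_{1l}^{q}y_{2l}^{n-q}s_E$ terms being automatic once such a $g$ is known to exist. The residual work is then purely the bookkeeping of the $\sigma$-expansions and the confirmation that the signs and the exponent $n$ come out as in \eqref{eqbbel}.
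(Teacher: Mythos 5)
Your overall architecture is close to the paper's own: the paper likewise obtains the lemma by imposing $(\phi,\psi,\bm{1}_{c})\cdot(\alpha,\beta,\xi)=j_{l}(b_{1l},b_{2l},e_{l})$ and solving for the group element, and your treatment of existence and uniqueness via Lemma \ref{lemmaquot2} (the set of $g$ carrying $j_m(\vec{b}_m)$ into $\im j_l$ is a coset of $\im\iota$, and the normalization $\chi=\bm{1}_c$ picks out one element because the $\Aut(\Wk)$-component of $\iota(\phi)$ is ${}^t\phi^{-1}$) is a legitimate, slightly more structural route than the paper's direct construction. Your derivations of $b_{1l}$ (from the degree-one blocks, which involve only $\psi_{11}$, resp.\ $\psi_{22}$ and $\phi$, using $\Hom(\On(1,-1),\On)=0$) and of $e_l$ (the framing condition forces the last column of $\psi$ to be trivial) are correct.

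The gap is in the formula for $b_{2l}$. It is true that $\psi_{12}\alpha_2$ lies in $s_E\cdot H^0(\On(0,n))$ and so contributes nothing to the $s_\infty$-coefficient; comparing $s_\infty$-coefficients in the top block of $\alpha'$ therefore gives ${}^tb_{2l}=\psi_{11}\,{}^tb_{2m}\,\phi^{-1}=d_{1m}\,{}^tb_{2m}\,\phi^{-1}$, where $d_{1m}=c_{m-l}\bm{1}_c-s_{m-l}\,{}^tb_{1m}$. But this still contains $\phi$, and $\phi$ \emph{cannot} be determined by "normalizing the leading coefficients'' while ignoring $\psi_{12}$: the coefficient of $y_{2l}^{n}s_E$ in $\psi_{11}\alpha_1\phi^{-1}+\psi_{12}\alpha_2\phi^{-1}$ receives a nonzero contribution from the off-diagonal block (the $y_{2l}^{n-1}$-part of $\psi_{12}$ multiplied by $d_{2m}=s_{m-l}\bm{1}_c+c_{m-l}\,{}^tb_{1m}$), so the leading-coefficient equation couples $\phi$ to $\psi_{12}$, which in turn is pinned down only by the vanishing of the intermediate coefficients $y_{2l}^{q}y_{1l}^{n-q}s_E$, $q<n$ --- exactly the cancellations you propose to take for granted. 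Comparing the $s_\infty$-coefficient of the second block of $\beta'$ instead yields ${}^tb_{2l}={}^tb_{2m}\psi_{22}^{-1}={}^tb_{2m}d_{1m}\phi^{-1}$, which by (T1) is the same relation, so no new information. The paper resolves this by solving the full system: the recursion determining $\psi_{12}$ forces $\phi=d_{1m}^{-(n-1)}$, whence ${}^tb_{2l}=d_{1m}\,{}^tb_{2m}\,d_{1m}^{n-1}=d_{1m}^{n}\,{}^tb_{2m}$ by (T1), i.e., the stated formula. Note also that your heuristic for the exponent is misleading: the coordinate change rescales $y_{2m}^{n}$ by the \emph{scalar} $c_{m-l}^{n}$, not by the matrix $d_{1m}^{n}$; the matrix power comes from the group element, not from the rotation of fibre coordinates. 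As written, your argument leaves the power of $(c_{m-l}\bm{1}_c-s_{m-l}b_{1m})$ appearing in $b_{2l}$ undetermined, so the explicit computation of $\psi_{12}$ (or at least of $\phi$) cannot be bypassed.
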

\begin{proof}
If we set $(\alpha,\beta,\xi)=j_{m}(\vec{b}_{m})$, by expressing the coordinates $[y_{1m},y_{2m}]$ as functions of $[y_{1l},y_{2l}]$ we get
\begin{equation*}
\begin{aligned}
\alpha&=
\begin{pmatrix}
\sum_{q=0}^{n}(\sigma_{q}\bm{1}_{c})(y_{2l}^{q}y_{1l}^{n-q}s_{E})+{^tb_{2m}}s_\infty\\
d_{1m}y_{1m}+d_{2m}y_{2m}\\
0
\end{pmatrix}\,,\\
\beta&=
\begin{pmatrix}
d_{1m}y_{1m}+d_{2m}y_{2m},&
-\sum_{q=0}^{n}(\sigma_{q}\bm{1}_{c})(y_{2l}^{q}y_{1l}^{n-q}s_{E})-{^tb_{2m}}s_\infty,&
^te_{m}s_\infty
\end{pmatrix}\,,
\end{aligned}
\end{equation*}
where
\begin{equation*}
d_{1m}=c_{m-l}\bm{1}_c-s_{m-l}{}^t\mspace{2mu}b_{1m}\qquad\qquad d_{2m}=s_{m-l}\bm{1}+c_{m-l}{}^t\mspace{2mu}b_{1m}
\end{equation*}
and we have put $\sigma_{q}=\sigma^{n}_{l-m;nq}$ for $q=0,\dots,n$ (see eq. \eqref{eqsigma}). The explicit form of $\psi_{l}(\vec{b}_{m})$ is obtained by imposing the equality 
\begin{equation}
(\phi,\psi,\bm{1}_{c})\cdot(\alpha,\beta,\xi)=j_{l}(b_{1l},b_{2l},e_{l})
\label{eqbpsi}
\end{equation}
for some $(b_{1l},b_{2l},e_{l})\in \T(c)_{l}$. One gets
\begin{gather*}
\begin{aligned}
\phi&=d_{1m}^{-(n-1)}\\
\psi&=
\begin{pmatrix}
d_{1m} & \psi_{12,1} & 0\\
0 & d_{1m}^{-n} & 0\\
0 & 0 & 1
\end{pmatrix}\,,
\end{aligned}
\\[5pt]
\text{where}\qquad\psi_{12,1}=-\sum_{q=0}^{n-1}\sum_{p=0}^{q}\sigma_{q-p}\left(-d_{2m}d_{1m}^{-1}\right)^{p}y_{1l}^{q}y_{2l}^{n-1-q}\,.
\end{gather*}
Eq.~\eqref{eqbbel} follows  from eq.~\eqref{eqbpsi}.
\end{proof}
 
Equations \eqref{eqtcml} and \eqref{eqbbel}
yield a proof of Proposition \ref{pro1}.

\frenchspacing\bigskip

 \def\cprime{$'$} \def\cprime{$'$} \def\cprime{$'$} \def\cprime{$'$}

\end{document}